\tikzstyle{ghostfill} = [fill=white]
\tikzstyle{ghostdraw} = [draw=black!50]
\tikzstyle arrowstyle=[scale=1]
\tikzstyle directed=[postaction={decorate,decoration={markings, mark=at position .65 with {\arrow[arrowstyle]{stealth}}}}]
\tikzstyle reverse directed=[postaction={decorate,decoration={markings, mark=at position .65 with {\arrowreversed[arrowstyle]{stealth};}}}]
    \def\MR#1{}
\theoremstyle{plain}
\newtheorem{Theorem}{Theorem}[section]
\newtheorem{Lemma}[Theorem]{Lemma}
\newtheorem{Corollary}[Theorem]{Corollary}
\theoremstyle{definition}
\newtheorem{Assumptions and Discussion}[Theorem]{Assumptions and Discussion}
\newtheorem{Definition}[Theorem]{Definition}
\newtheorem{Remark}[Theorem]{Remark}
\newtheorem{Observation}[Theorem]{Observation}
\theoremstyle{remark}
\newtheorem*{acknowledgment*}{Acknowledgment}
\setlist{leftmargin=*}
\def\bar#1{\overline{#1}}
\def\Char{\operatorname{char}}
\def\deg{\operatorname{deg}}
\def\Ht{\operatorname{ht}} 
\def\iv{\operatorname{iv}} 
\def\oc{\operatorname{oc}}
\def\reg{\operatorname{reg}}
\newcommand\calI{\mathcal{I}}
\newcommand{\pd}{\operatorname{pd}}
\begin{document}

\title{Regularity of powers of (parity) binomial edge ideals}
\date{\today}

\author{Yi-Huang Shen}
\address{CAS Wu Wen-Tsun Key Laboratory of Mathematics, School of Mathematical Sciences, University of Science and Technology of China, Hefei, Anhui, 230026, P.R.~China}
\email{yhshen@ustc.edu.cn}

\author{Guangjun Zhu$^{\ast}$}
\address{School of Mathematical Sciences, Soochow University, Suzhou, Jiangsu, 215006, P.R.~China}
\email{zhuguangjun@suda.edu.cn}

\thanks{$^{\ast}$ Corresponding author}
\thanks{2020 {\em Mathematics Subject Classification}.
    Primary 13C13, 13C15; Secondary 13D02, 13F20, 05E40}

\thanks{Keywords: Regularity, binomial edge ideal, parity binomial edge ideal, $d$-sequences, almost complete intersection}

\begin{abstract}
    In this paper, we provide exact formulas for the Castelnuovo-Mumford regularity of powers of an almost complete intersection ideal $I$ which is generated by a homogeneous $d$-sequence. As applications, when $I$ is an almost complete intersection, taking the form of the (parity) binomial edge ideal of a connected graph, we can describe explicitly formulas for  $\reg(I^t)$ for $t\ge 2$. The only exception is when $I$ is the parity binomial edge ideal of a graph which is obtained by adding an edge between two disjoint odd cycles.
\end{abstract}

\maketitle

\section{Introduction}
In this article, we are interested in the Castelnuovo-Mumford regularity (abbreviated as  regularity) of powers of homogeneous ideals in the polynomial rings. Let $R=\kappa[z_{1},\dots,z_{n}]$ be such a standard graded polynomial ring over a field $\kappa$ and $I$ be a homogeneous ideal in $R$. It is well-known that the regularity $\reg(R/I^t)$, as a function in $t$, is asymptotically linear for $t\gg 0$ (cf.~\cites{MR1711319, MR1621961}). In general, it is very difficult to decide when this function starts to be linear. To find the exact form of the linear function is also not easy (cf.~\cites{MR3922638, MR2869107, MR2770578, MR4160984}). In the following, we will call them the \emph{linearization-of-regularity problems}, by abuse of terminology.

For these problems, the simplest case is when $I$ is a quadratic squarefree monomial ideal. Whence, it can be recognized as the edge ideal of suitable graphs. And researchers have really obtained some attempting results for a few classes of simple graphs (e.g., forest graphs, cycle graphs, bipartite graphs) (cf.~\cites{MR2923179, MR3417259, MR3757147}). In contrast, little is known about the related binomial ideals. Only recently, Jayanthan et al.~in \cite{JKS} provided the exact formulas for the regularity of powers of binomial edge ideals of several simple graphs such as cycle graphs, star graphs and balloon graphs. And Ene et al.~in \cite{ERT} studied the same problems for connected closed graphs.

Notice that Raghavan in \cite{MR1188639} introduced the notion of quadratic sequence, which generalized the $d$-sequence of Huneke in \cite{MR683201}. And the main result of Jayanthan et al.~in \cite{JKS} is an upper bound for the regularity of powers of an ideal generated by a quadratic sequence. Inspired by their work, in this paper, we will provide exact formulas for the regularity of powers of $I$, when $I$ is an almost complete intersection ideal generated by a $d$-sequence (\Cref{cor:thm_3_general}). Our formulas are written in terms of the regularity of this ideal and one colon ideal only. As applications, we will study the linearization-of-regularity problems when the ideal $I$ is either a binomial edge ideal or a parity binomial edge ideal of some undirected simple graph $G$.

Given such a graph $G$, suppose that it has the vertex set $V(G)=[n]\coloneqq \{1,\ldots,n\}$ and the edge set $E(G)$. Let $S=\kappa[x_1,\dots,x_n,y_1,\dots,y_n]$ denote the accompanying standard graded polynomial ring over the field $\kappa$. Then, its \emph{binomial edge ideal}, introduced by Herzog et al.~in \cite{MR2669070} and independently by Ohtani in \cite{MR2782571}, is defined by
\[
    J_G\coloneqq (x_iy_j-x_jy_i: \{i,j\}\in E(G))\subset S.
\]
Binomial edge ideal has interesting applications in algebraic statistics, in the context of conditional independence ideals (see \cite{MR2669070}). Ever since its debut, the binomial edge ideal has been extensively studied. In \cites{EHH, MR3169567, RG}, the authors considered the Cohen-Macaulay property of chordal graphs, closed graphs, cactus graphs, bicyclic graphs and bipartite graphs, etc. Several nice results on Cohen-Macaulay bipartite graphs and block graphs have been obtained (see \cites{BMS, MR3395714}). The study of the regularity of $J_G$ has also attracted a lot of attention in the recent years, due to its algebraic and geometric importance. In \cite[Theorem 1.1]{MR3084125}, Matsuda and Murai proved that for any graph $G$ on $[n]$, $\ell(G)\leq \reg(S/J_G)\leq n-1$, where $\ell(G)$ is the length of a longest induced path in $G$. After that, many exciting results were obtained regarding the $\reg(S/J_G)$, especially when $G$ is a closed graph, a Cohen-Macaulay bipartite graph, a block graph or a chordal graph (see \cites{JNR, JK, MR3310496, MR4248594} for instance).

As mentioned earlier, another type of binomial ideal that we care is the \emph{parity binomial edge ideal} related to the graph $G$.
This ideal was introduced in \cite{MR3514772} as defined by
\[
    \calI_{G}\coloneqq(x_ix_j-y_iy_j :\{i,j\}\in E(G))\subset S.
\]
In contrast to binomial edge ideals, parity binomial edge ideals bear similar but much more subtler combinatorics.
Kumar in \cite{MR4257788} proved that $\reg(S/\calI_G )\geq \max\{\ell(G), \oc(G)\}$, where $\oc(G)$ denotes the length of a longest induced odd cycle in $G$. Further more, if $G$ is a connected non-bipartite graph on $[n]$ such that $G\setminus e$ is a bipartite graph for some edge $e$, and if $G$ is not an odd cycle, then he proved that $\reg(S/\calI_G)\leq n-1$. Meanwhile, he also characterized all graphs whose parity binomial edge ideals have regularity $3$.

Actually, one can also consider the Lov\'asz–Saks–Schrijver ideal $L_G$ (\cite{MR986889}) and the permanental edge ideal $\Pi_{G}$ (\cite{MR3406962}) associated to $G$. However, by \cite[Remark 3.4]{MR4186617}, we can focus only on binomial edge ideals and parity binomial edge ideals.

As one can observe, for both binomial edge ideals and parity binomial edge ideals, results on the regularity of their powers are not abundant. The work in \cite{ERT} and \cite{JKS} are perhaps the only ones along this research line that we can find so far. That is the reason why we want to push forward a little bit. Due to its difficulty, we have to focus on the cases when $J_G$ and $\calI_G$ are almost complete intersections. Luckily, graphs with almost complete intersection (parity) binomial edge ideals have been completely characterized by the nice work of Jayanthan et al.~in \cite{JKS2} and Kumar in \cite{MR4186617}.
Therefore, as the application of our main results, we can solve the linearization-of-regularity problems for those ideals when the underlying graph $G$ is connected, with only one exception.

This paper is organized as follows. In the next section, we recall several definitions and terminology which we need later. In Sections $3$, we provide the exact formulas for the regularity of powers of an almost complete intersection ideal generated by a homogeneous $d$-sequence, in terms of the regularity of this ideal and a related colon ideal. As applications, in Sections $4$, we compute the regularity of powers of the binomial edge ideals of all connected graphs when the ideals are almost complete intersection. In Sections $5$, we give similar results for parity binomial edge ideals, except when the graph is obtained by adding an edge between two disjoint odd cycles.

\section{Preliminaries}
In this section, we gather together needed definitions and basic facts, which will be used throughout this paper.

\subsection{Basics for simple graphs}

Let $G$ be a simple graph with the vertex set $V(G)=[n]$ and the edge set $E(G)$. It is said to be \emph{bipartite} if there is a bipartition of $V(G)=V_1\sqcup V_2$ such that for each $i=1,2$, no two of the vertices of $V_i$ are adjacent in $G$. Otherwise, it is called a \emph{non-bipartite} graph.  The graph $G$ is called a \emph{complete graph}, if $\{i,j\} \in E(G)$ for all distinct $i,j \in [n]$. A complete graph with $n$ vertices is usually denoted by $K_n$. For any nonempty subset $A\subset V(G)$, $G[A]$ denotes the \emph{induced subgraph} of $G$ on the vertex set $A$, i.e., for $i,j \in A$, $\{i,j\} \in E(G[A])$ if and only if $\{i,j\}\in E(G)$. A subset $U$ of $V(G)$ is said to be a \emph{clique} if $G[U]$ is a complete graph. A vertex $v$ is said to be a \emph{free vertex} if it belongs to exactly one maximal clique; otherwise, it is called an \emph{internal vertex}. In the following, let $\iv(G)$ denote the number of internal vertices of $G$.

The \emph{neighbourhood} of a vertex $v$ in $G$ is defined as $N_G(v)\coloneqq \{u \in V(G)\mid \{u,v\}\in E(G)\}$ and its \emph{degree}, denoted by $\deg_{G}(v)$, is $|N_G(v)|$. For a vertex $v$ in $G$, $G\setminus v$ denotes the induced subgraph of $G$ on the vertex set $V(G)\setminus \{v\}$, and $G_v$ denotes the graph on the vertex set $V(G)$ with edge set $E(G_v)=E(G)\cup \{\{u,w\} \mid u,w \in N_G (v)\}$.

A connected graph $G$ is called a \emph{cycle} if $\deg_{G}(v)=2$ for all $v\in V(G)$. A cycle with $n$ vertices is denoted by $C_n$. A connected graph with vertex set $[n]$ and edge set $E(G)=\Set{\{i,i+1\} \mid 1\leq i\leq n-1}$ is said to be a \emph{path}. Such a path is usually denoted by $P_n$. And the two vertices $1$ and $n$ are called its \emph{end points}. A connected  graph is a \emph{tree} if it does not contain a cycle. A graph is called a \emph{unicyclic graph} if it contains only one cycle. The \emph{girth} of a graph $G$ is the length of a shortest cycle in $G$. A unicyclic graph with even (resp.~odd) girth is called an \emph{even} (resp.~\emph{odd}) \emph{unicyclic graph}.

Meanwhile, a vertex $v$ of $G$ is \emph{cut vertex} in $G$ if $G\setminus v$ has more connected components than $G$.
And a \emph{block} of $G$ is a maximal subgraph without a cut vertex. A connected graph is a \emph{cactus} if its blocks are cycles or edges. And the graph $G$ is a \emph{block graph} if every block of $G$ is a complete graph. In other words, a block graph is a chordal graph such that every pair of blocks of $G$ intersects in at most one vertex.

To study the binomial edge ideal of a simple graph $G$, we have to consider several decomposition operations as follows.
If $e$ is an edge in $G$, then $G\setminus e$ is the graph with the vertex set $V(G)$ and the edge set $E(G)\setminus \{e\}$. On the other hand, if $u,v\in V(G)$ with $e=\{u,v\}\notin E(G)$, then $G_e$ denotes a graph on the vertex set $V(G)$ with the edge set $E(G_e)=E(G)\cup \{\{x,y\}\mid x, y \in N_{G}(u) \text{ or } x,y \in N_{G}(v)\}$.

\subsection{Notions from commutative algebra}

For any homogeneous ideal $I$ of the polynomial ring $R=\kappa[z_{1},\dots,z_{n}]$, there exists a \emph{graded minimal free resolution}
\[
    0\rightarrow \bigoplus\limits_{j}R(-j)^{\beta_{p,j}(R/I)}\rightarrow \bigoplus\limits_{j}R(-j)^{\beta_{p-1,j}(R/I)}\rightarrow \cdots\rightarrow \bigoplus\limits_{j}R(-j)^{\beta_{0,j}(R/I)}\rightarrow R/I\rightarrow 0,
\]
where $R(-j)$ is obtained from $R$ by a shift of degree $j$. The number $\beta_{i,j}(R/I)$, the $(i,j)$-th graded \emph{Betti} number of $R/I$, is an invariant of $R/I$ that equals the number of minimal generators of degree $j$ in the $i$-th syzygy module of $R/I$. The \emph{regularity} of $R/I$, denoted by $\reg(R/I)$, is
\[
    \reg(R/I)\coloneqq\max\,\Set{j-i\mid \beta_{i,j}(R/I)\neq 0}.
\]
Meanwhile, the \emph{projective dimension} of $R/I$, denoted by $\pd(R/I)$, is
\[
    \pd(R/I)\coloneqq\max\,\Set{i\mid \beta_{i,j}(R/I)\neq 0}.
\]
These two invariants measure the complexity of the minimal graded free resolution of $R/I$.

The following lemma is useful when dealing with the regularity of an ideal.

\begin{Lemma}
    [{\cite[Lemma 3.1]{MR2643966}}]
    \label{lem5}
    Let $0\rightarrow M \rightarrow N \rightarrow P \rightarrow 0$ be a short exact sequence of finitely generated graded $R$-modules. Then we have the following.
    \begin{enumerate}[a]
        \item \label{lem5-1} If $\reg(M)\ne \reg(P)+1$, then $\reg(N)=\max\Set{\reg(M), \reg(P)}$.
        \item \label{lem5-2} If $\reg(N)\ne\reg(P)$, then $\reg(M)=\max\Set{\reg(N), \reg(P)+ 1}$.
        \item \label{lem5-3} We always have $\reg(P)\le \max\Set{\reg(M)-1,\reg(N)}$ and the equality holds if $\reg(M)\ne \reg(N)$.
    \end{enumerate}
\end{Lemma}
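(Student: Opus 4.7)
The plan is to reduce everything to three basic inequalities that fall out of the long exact sequence in local cohomology (equivalently, the long exact sequence in $\Tor_\bullet^R(-,\kappa)$) associated with the given short exact sequence, and then to deduce (a), (b), (c) by a short case analysis.

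First, I would invoke Grothendieck's characterization $\reg(-)=\max\{i+j:H^i_{\mathfrak{m}}(-)_j\neq 0\}$, where $\mathfrak{m}=(z_1,\dots,z_n)$. Applying the functors $H^i_{\mathfrak{m}}$ to $0\to M\to N\to P\to 0$ yields
\[
    \cdots\to H^{i-1}_{\mathfrak{m}}(P)\to H^i_{\mathfrak{m}}(M)\to H^i_{\mathfrak{m}}(N)\to H^i_{\mathfrak{m}}(P)\to H^{i+1}_{\mathfrak{m}}(M)\to \cdots,
\]
and reading off the nonvanishing degrees across each run of three consecutive terms delivers three inequalities at once: (I) $\reg(M)\leq\max\{\reg(N),\reg(P)+1\}$, (II) $\reg(N)\leq\max\{\reg(M),\reg(P)\}$, and (III) $\reg(P)\leq\max\{\reg(M)-1,\reg(N)\}$. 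The lemma then reduces to a bookkeeping exercise with these three inequalities.

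Next, I would handle the three parts in order. For (c), the inequality is (III) itself; for the equality case, if $\reg(M)>\reg(N)$ then (I) forces $\reg(M)\leq\reg(P)+1$ and hence $\reg(P)\geq\reg(M)-1$, making (III) an equality, while if $\reg(M)<\reg(N)$ then (II) forces $\reg(N)\leq\reg(P)$, and combining with (III) gives $\reg(P)=\reg(N)$. For (a), under the hypothesis $\reg(M)\neq\reg(P)+1$ I would split on whether $\reg(M)>\reg(P)$ or $\reg(M)\leq\reg(P)$ and show in each subcase that (II) cannot be strict: in the first subcase a strict inequality would, via (I), force $\reg(M)=\reg(P)+1$, contradicting the hypothesis, and in the second subcase (III) already forces $\reg(P)\leq\reg(N)$ so that (II) must be an equality. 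For (b), I would split on whether $\reg(N)>\reg(P)$ or $\reg(N)<\reg(P)$, then play (I) against (II) in the first subcase and (I) against (III) in the second to squeeze $\reg(M)$ to the asserted value $\max\{\reg(N),\reg(P)+1\}$.

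I do not anticipate a real obstacle here, as this is essentially the standard ``triangle inequality'' for regularity. The only mildly error-prone step is matching indices correctly when reading (I)--(III) off the local cohomology sequence; once those three inequalities are in hand, parts (a)--(c) follow mechanically from the elementary case analysis above.
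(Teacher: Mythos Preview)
Your argument is correct: the three inequalities (I)--(III) are exactly what the long exact sequence in local cohomology (or in $\Tor$) gives, and your case analyses for (a), (b), (c) go through as written. Note, however, that the paper itself does not supply a proof of this lemma at all; it is quoted verbatim from \cite[Lemma~3.1]{MR2643966} and used as a black box throughout. So there is no ``paper's own proof'' to compare against --- you have simply filled in the standard argument behind a cited fact.
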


\begin{Definition}
    A homogeneous ideal $I$ of the polynomial ring $R=\kappa[z_{1},\dots,z_{n}]$ is a \emph{complete intersection} if $\mu(I)=\text{ht}(I)$, where $\mu(I)$ denotes the cardinality of a minimal homogeneous generating set of $I$. It is said to be an \emph{almost complete intersection} if $\mu(I)= \text{ht}(I)+1$ and $I_{\mathfrak{p}}$ is complete intersection for all minimal primes $\mathfrak{p}$ of $I$.
\end{Definition}

It is known that for a simple graph $G$, $J_G$ is complete intersection if and only if all connected components of $G$ are paths (see \cite{EHH}) and $\calI_G$ is a complete intersection if and only if all the bipartite connected components of $G$ are paths while all the non-bipartite connected components are odd cycles (see \cite{BMS}).

\begin{Definition}
    Set $u_0=0\in R$. An ordered sequence of elements $u_1,\ldots,u_n$ in $R$ is said to be a \emph{$d$-sequence} if either of the following equivalent conditions hold:
    \begin{enumerate}[a]
        \item $((u_0,u_1,\ldots,u_{i}): u_{i+1}u_j)=((u_0,u_1,\ldots,u_{i}): u_j)$ for all $0\leq i\leq n-1$ and for all $j\geq i+1$;
        \item $((u_0,u_1,\ldots,u_{i}): u_{i+1})\cap (u_1,\ldots,u_{n})=(u_0,u_1,\ldots,u_{i-1})$ for all $0\leq i\leq n-1$.
    \end{enumerate}
\end{Definition}

\begin{Observation}
    \label{obs:1}
    Suppose that $u_1,\dots,u_n$ form a $d$-sequence in $R$ and $U=(u_1,\dots,u_n)$ is the ideal they generate in $R$. Then, we have
    \begin{equation*}
        ((u_0,u_1,\dots,u_{i-1})+U^t):u_i=((u_0, u_1,\dots,u_{i-1}):u_i)+U^{t-1}
    \end{equation*}
    for $t\ge 1$ and $i=1,\dots,n$. To check the aforementioned equality, it suffices to consider the case when $i=1$, since the images of $u_i,\dots,u_n$ in $R/(u_1,\dots,u_{i-1})$ will form a $d$-sequence by \cite[Remarks after Definition 1.1]{MR683201}. But then it is easy: for any $f\in (U^t:u_1)$, one has $fu_1\in U^t\cap (u_1)=u_1U^{t-1}$ by \cite[Theorem 2.1]{MR683201}. Therefore, $f\in (0:u_1)+U^{t-1}$.
\end{Observation}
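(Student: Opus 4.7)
The statement is an equality of ideals, so my plan is to establish both inclusions. The direction $\supseteq$ is immediate: any element of $((u_0,\dots,u_{i-1}):u_i)$ multiplied by $u_i$ lies in $(u_0,\dots,u_{i-1})$, and any element of $U^{t-1}$ multiplied by $u_i$ lies in $U^t$. So both of these ideals are contained in $((u_0,u_1,\dots,u_{i-1})+U^t):u_i$. All the work is in the nontrivial direction $\subseteq$.

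My plan for $\subseteq$ is to first reduce to the case $i=1$, and then invoke Huneke's structure theorem for $d$-sequences. For the reduction, I would pass to the quotient $\bar R \coloneqq R/(u_1,\dots,u_{i-1})$. By the cited remarks following \cite[Definition 1.1]{MR683201}, the images $\bar u_i,\dots,\bar u_n$ form a $d$-sequence in $\bar R$. Since $u_0=0$, the ideal $(u_0,u_1,\dots,u_{i-1})+U^t$ has image $\bar U^t$ in $\bar R$, where $\bar U = (\bar u_i,\dots,\bar u_n)$; similarly the ideal $(u_0,u_1,\dots,u_{i-1}):u_i$ corresponds under the quotient map to $(0:\bar u_i)$. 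Hence it suffices to prove the $i=1$ case with the shorter $d$-sequence $\bar u_i,\dots,\bar u_n$, pulled back to $R$.

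For the base case $i=1$, the target inclusion becomes $U^t : u_1 \subseteq (0:u_1)+U^{t-1}$. Here I would apply \cite[Theorem 2.1]{MR683201}, which yields the intersection formula $U^t \cap (u_1) = u_1 U^{t-1}$ for any $d$-sequence. Given $f \in U^t : u_1$, the product $fu_1$ lies in $U^t \cap (u_1)$, so $fu_1 = u_1 g$ for some $g \in U^{t-1}$. Then $u_1(f-g)=0$, so $f-g \in (0:u_1)$, and therefore $f = (f-g)+g \in (0:u_1)+U^{t-1}$, as desired.

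The only subtle point I anticipate is making sure the passage to the quotient $\bar R$ is compatible with the colon operation on both sides — specifically, that adding the generators $u_1,\dots,u_{i-1}$ to the colon does not enlarge the right-hand side beyond $((u_0,u_1,\dots,u_{i-1}):u_i)+U^{t-1}$ once we lift back from $\bar R$. This is routine once one observes that $u_1,\dots,u_{i-1}$ already sit inside $((u_0,u_1,\dots,u_{i-1}):u_i)$, so the lift of $(0:\bar u_i)+\bar U^{t-1}$ is exactly the stated right-hand side. Apart from this bookkeeping, the argument is short, and its engine is really the single intersection identity $U^t \cap (u_1)=u_1U^{t-1}$ supplied by Huneke.
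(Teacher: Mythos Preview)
Your proposal is correct and follows essentially the same approach as the paper: reduce to the case $i=1$ by passing to $R/(u_1,\dots,u_{i-1})$ via \cite[Remarks after Definition 1.1]{MR683201}, then use Huneke's intersection identity $U^t\cap(u_1)=u_1U^{t-1}$ from \cite[Theorem 2.1]{MR683201} to conclude. Your write-up is slightly more explicit about the easy containment and the bookkeeping in lifting back from the quotient, but the engine of the argument is identical.
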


\section{Almost complete intersection ideal generated by a homogeneous $d$-sequence}

In this section, we will consider the regularity of powers of an equigenerated almost complete intersection ideal $U$ in some standard graded polynomial ring $R=\kappa[x_1,\dots,x_m]$ over a field $\kappa$. When the field $\kappa$ is infinite, by \cite[Proposition 4.10]{JKS2}, we may assume that $U$ is generated by a $d$-sequence $u_1, \dots , u_{n}$ such that $u_1, \dots , u_{n-1}$ is a regular sequence.

\begin{Theorem}
    \label{thm-3}
    Let $R$ be a standard graded polynomial ring over a field $\kappa$ and $u_1,\dots,u_n\in R$ a homogeneous $d$-sequence of degree $\delta\ge 2$ such that $u_1,\dots,u_{n-1}$ form a regular sequence. Set $U=(u_1,\dots,u_n)$ and assume that $\reg(R/U)\le n\delta-n-\delta$. Then, for all $t\ge 2$ and $i=0,1,2,\dots,n-1$, we have
    \[
        \reg\left(\frac{R}{(u_1,\dots,u_i)+U^t}\right)=n\delta-n-2\delta+\delta t.
    \]
    In particular, $\reg(R/U^t)= n\delta-n-2\delta+\delta t $ for $t\ge 2$.
\end{Theorem}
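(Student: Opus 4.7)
The plan is to prove the claim by a double induction driven by the multiplication-by-$u_i$ short exact sequence. Set $M_i^t \coloneqq R/((u_1,\dots,u_i)+U^t)$ and $\rho_t \coloneqq n\delta - n - 2\delta + t\delta$; the target is $\reg(M_i^t) = \rho_t$ for every $t \ge 2$ and every $0 \le i \le n-1$, the ``in particular'' statement being the case $i = 0$. Combining \Cref{obs:1} with the regular-sequence property of $u_1,\dots,u_{n-1}$ (so $(u_1,\dots,u_{i-1}) : u_i = (u_1,\dots,u_{i-1})$ for $1 \le i \le n-1$) yields the colon identity $((u_1,\dots,u_{i-1}) + U^t) : u_i = (u_1,\dots,u_{i-1}) + U^{t-1}$, and hence the backbone short exact sequence
\[
0 \to M_{i-1}^{t-1}(-\delta) \xrightarrow{\cdot u_i} M_{i-1}^t \to M_i^t \to 0, \qquad 1 \le i \le n-1.
\]

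Next I would anchor the induction at $i = n-1$. Modulo $(u_1,\dots,u_{n-1})$, every degree-$t$ monomial in $u_1,\dots,u_n$ vanishes except $u_n^t$, so $M_{n-1}^t = R/((u_1,\dots,u_{n-1}) + (u_n^t))$. Writing $K \coloneqq (u_1,\dots,u_{n-1}) : u_n$, the $d$-sequence identity $((u_1,\dots,u_{n-1}) : u_n \cdot u_n) = ((u_1,\dots,u_{n-1}) : u_n)$ iterates to $(u_1,\dots,u_{n-1}) : u_n^t = K$ for every $t \ge 1$, producing
\[
0 \to (R/K)(-t\delta) \xrightarrow{\cdot u_n^t} R/(u_1,\dots,u_{n-1}) \to M_{n-1}^t \to 0.
\]
Taking $t = 1$ gives the third term $R/U$; since $\reg(R/(u_1,\dots,u_{n-1})) = (n-1)(\delta-1) = \rho_1 + 1$ strictly exceeds $\reg(R/U) \le \rho_1$, \Cref{lem5}(b) applies and yields $\reg(R/K) = \rho_1 + 1 - \delta = n\delta - n - 2\delta + 1$. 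Reinserting this for general $t \ge 2$, the outer regularities in the displayed SES become $\rho_t + 1$ and $\rho_1 + 1$, which differ for $t \ge 2$, so \Cref{lem5}(c) forces $\reg(M_{n-1}^t) = \rho_t$.

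The double induction now proceeds: outer induction on $t$, inner descending induction on $i$ from $n-1$ down to $0$ via the backbone SES. Assuming $\reg(M_i^t) = \rho_t$, the leftmost regularity $\reg(M_{i-1}^{t-1}(-\delta))$ is at most $\rho_1 + \delta = \rho_2$ when $t = 2$ (since $M_{i-1}^1 = R/U$), and equals $\rho_{t-1} + \delta = \rho_t$ when $t \ge 3$ by the outer hypothesis. In either case it is $\le \rho_t < \rho_t + 1 = \reg(M_i^t) + 1$, so \Cref{lem5}(a) applies and forces $\reg(M_{i-1}^t) = \max\{\reg(M_{i-1}^{t-1}(-\delta)), \rho_t\} = \rho_t$. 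The main obstacle is the anchor computation of $\reg(R/K)$: the hypothesis $\reg(R/U) \le n\delta - n - \delta$ is calibrated precisely so that $\reg(R/U)$ sits strictly below $\reg(R/(u_1,\dots,u_{n-1}))$ and \Cref{lem5}(b) returns an equality rather than a one-sided bound. All subsequent steps are bookkeeping with \Cref{lem5}, provided the non-coincidence hypotheses are verified by tracking the exact values $\rho_t + 1$ and $\rho_1 + 1$ rather than merely estimating them.
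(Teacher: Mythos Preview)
Your argument is correct and follows essentially the same blueprint as the paper: the same multiplication-by-$u_i$ short exact sequence, the same colon identity from \Cref{obs:1}, the same computation of $\reg(R/K)$ via the $t=1$ sequence, and the same double induction (outer on $t$, inner descending on $i$). The one organizational difference is at the anchor $i=n-1$: the paper, for $t\ge 3$, uses the sequence \eqref{eqn:SES-1} with $j=n-1$ (whose cokernel is $R/U$), which forces it to first compute $\reg\bigl(R/(K+(u_n^{t-1}))\bigr)$ via the auxiliary \Cref{lem:colon_n_minus_1} and \Cref{lem:1.9}; you instead use the single sequence $0\to (R/K)(-t\delta)\to R/(u_1,\dots,u_{n-1})\to M_{n-1}^t\to 0$ uniformly for all $t\ge 2$, which is exactly the sequence the paper uses only in the $t=2$ case of \Cref{lem:U2}. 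This lets you bypass \Cref{lem:colon_n_minus_1} and \Cref{lem:1.9} entirely and fold the $t=2$ base case into the same descent, so your write-up is a genuine streamlining of the paper's proof while resting on the identical ideas.
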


\begin{proof}
    We will prove by induction on $t\ge 2$ with the $t=2$ case verified separately in \Cref{lem:U2}. Thus, in the following, we may assume that $t\ge 3$. And our arguments depend on the repeated investigation of the following short exact sequences
    \begin{equation}
        0\to \frac{R}{((u_1,\dots,u_j)+U^t):u_{j+1}}(-\delta) \xrightarrow{\cdot u_{j+1}} \frac{R}{(u_1,\dots,u_j)+U^t} \to \frac{R}{(u_1,\dots,u_{j+1})+U^t} \to 0
        \label{eqn:SES-1}
    \end{equation}
    for various $j$ and $t$. Since the $d$-sequence $u_1,\dots,u_n$ generates the ideal $U$, we have
    \begin{equation}
        ((u_1,\dots,u_j)+U^t):u_{j+1}=((u_1,\dots,u_j):u_{j+1})+U^{t-1}
        \label{eqn:colon-1}
    \end{equation}
    by \Cref{obs:1}.

    Now, we will prove the statements by the descending induction on $i$. Firstly, we consider the $i=n-1$ case
    by looking at the short exact sequence in \eqref{eqn:SES-1} with $j=n-1$. Observe that $(u_1,\dots,u_{(n-1)+1})+U^t=U$ in this case. Meanwhile,
    \[
        \frac{R}{((u_1,\dots,u_{n-1}):{u_{n}})+{U}^{t-1}}=\frac{R}{((u_1,\dots,u_{n-1}):u_n)+(u_n^{t-1})},
    \]
    which has regularity $n\delta-n-3\delta+\delta t$ by \Cref{lem:1.9}. As $t\ge 3$ and $\delta\ge 2$, we have
    \begin{align*}
        & \reg\left(\frac{R}{((u_1,\dots,u_{n-1}):{u_{n}})+{U}^{t-1}}(-\delta)\right)=\reg\left(\frac{R}{((u_1,\dots,u_{n-1}):{u_{n}})+{U}^{t-1}}\right)+\delta\\
        &= (n\delta-n-3\delta+\delta t)+\delta
        > (n-1)(\delta-1)+1>\reg(R/U)+1
    \end{align*}
    by the assumption on $\reg(R/U)$. Thus, we obtain
    \[
        \reg\left(\frac{R}{(u_1,\dots,u_{n-1})+U^t}\right)=n\delta-n-2\delta+\delta t,
    \]
    by item \ref{lem5-1} of \Cref{lem5}, as claimed.

    Next, we assume that $i\le n-2$ and the assertion holds for $i+1$. Once again, we look at the short exact sequence \eqref{eqn:SES-1} with $j=i$. Notice that ${R}/{((u_1,\dots,u_{i+1})+U^t)}$ has regularity $n\delta-n-2\delta+\delta t$ by induction on $i+1$. Meanwhile, since $u_1,\dots,u_{n-1}$ form a regular sequence, it follows from the equation \eqref{eqn:colon-1} that
    \[
        \frac{R}{(((u_1,\dots,u_i)+U^t):u_{i+1})}= \frac{R}{(u_1,\dots,u_i)+U^{t-1}},
    \]
    which has regularity $n\delta-n-3\delta+\delta t$ by induction on $t$. As $(n\delta-n-3\delta+\delta t)+\delta<(n\delta-n-2\delta+\delta t)+1$, again by item \ref{lem5-1} of \Cref{lem5}, ${R}/((u_1,\dots,u_i)+U^t)$ has regularity $n\delta-n-2\delta+\delta t$, as expected. And this completes our proof.
\end{proof}

To fully complete the proof of \Cref{thm-3}, we still have three small results to show. The first one deals with the $t=2$ case.

\begin{Lemma}
    \label{lem:U2}
    Under the assumptions in \Cref{thm-3}, we have
    \[
        \reg\left(\frac{R}{(u_1,\dots,u_i)+U^2}\right)=n\delta-n
    \]
    for $i=0,1,\dots, n-1$. 
\end{Lemma}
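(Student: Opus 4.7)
The plan is to mirror the descending induction on $i$ used in the proof of \Cref{thm-3}, working downward from $i = n-1$ to $i = 0$, and at each step to read off the regularity of the middle term of the short exact sequence
\[
0 \to \frac{R}{((u_1,\dots,u_i) + U^2) : u_{i+1}}(-\delta) \xrightarrow{\cdot u_{i+1}} \frac{R}{(u_1,\dots,u_i) + U^2} \to \frac{R}{(u_1,\dots,u_{i+1}) + U^2} \to 0
\]
from those of the two outer terms via item \ref{lem5-1} of \Cref{lem5}. The colon ideal on the left is handled uniformly by \Cref{obs:1} with $t = 2$, which gives $((u_1,\dots,u_i)+U^2):u_{i+1} = ((u_1,\dots,u_i):u_{i+1}) + U$.

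For the base case $i = n-1$, the right term of the sequence is $R/U$, whose regularity is at most $n\delta - n - \delta$ by hypothesis. The colon ideal collapses to $((u_1,\dots,u_{n-1}):u_n) + (u_n)$ since $(u_1,\dots,u_{n-1})$ sits inside $(u_1,\dots,u_{n-1}):u_n$. Here I would quote a companion lemma giving the formula $\reg(R/(((u_1,\dots,u_{n-1}):u_n) + (u_n^s))) = n\delta - n - 2\delta + \delta s$; its $s = 1$ instance produces the value $n\delta - n - \delta$, so after the twist by $-\delta$ the left term of the sequence has regularity exactly $n\delta - n$. Because $\delta \ge 2$, this strictly exceeds $\reg(R/U) + 1$, so item \ref{lem5-1} of \Cref{lem5} yields $\reg(R/((u_1,\dots,u_{n-1}) + U^2)) = n\delta - n$.

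For the inductive step, with $i \le n-2$ and the claim assumed at $i+1$, the right term has regularity $n\delta - n$ directly from the inductive hypothesis. For the left term I would use that $u_1,\dots,u_{n-1}$ is a regular sequence and $i+1 \le n-1$, which forces $(u_1,\dots,u_i):u_{i+1} = (u_1,\dots,u_i)$, so the colon ideal collapses all the way to $U$ itself. Thus the left term is $R/U(-\delta)$ of regularity at most $\reg(R/U) + \delta \le n\delta - n$, which is strictly below $(n\delta - n) + 1$, and item \ref{lem5-1} of \Cref{lem5} again produces $\reg(R/((u_1,\dots,u_i) + U^2)) = n\delta - n$, closing the induction.

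The main obstacle is the auxiliary computation of $\reg(R/(((u_1,\dots,u_{n-1}):u_n) + (u_n)))$ used in the base case. This is not a formal consequence of the $d$-sequence axioms and has to come from a separate lemma: namely, the $s=1$ analogue of the companion result already needed (for $s \ge 2$) in the proof of \Cref{thm-3}. Once that input is granted, the rest of the argument is routine bookkeeping with short exact sequences, the $d$-sequence identity of \Cref{obs:1}, and the regularity trichotomy of \Cref{lem5}.
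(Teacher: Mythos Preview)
Your proof is correct and follows the same descending-induction framework as the paper, with an identical inductive step. The only difference is in the base case $i=n-1$: rather than invoking \Cref{lem:1.9} at $t=1$, the paper first computes $\reg\bigl(R/((u_1,\dots,u_{n-1}):u_n)\bigr)=n\delta-n-2\delta+1$ directly from the sequence
\[
0\to \frac{R}{(u_1,\dots,u_{n-1}):u_n}(-\delta)\xrightarrow{\cdot u_n} \frac{R}{(u_1,\dots,u_{n-1})}\to \frac{R}{U}\to 0,
\]
and then applies multiplication by $u_n^2$ (not $u_n$) together with the $d$-sequence identity $(u_1,\dots,u_{n-1}):u_n^2=(u_1,\dots,u_{n-1}):u_n$ and item \ref{lem5-3} of \Cref{lem5} to read off $\reg\bigl(R/((u_1,\dots,u_{n-1})+u_n^2)\bigr)=n\delta-n$. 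Your route is more uniform, re-using the same exact sequence as in the general step of \Cref{thm-3}, at the price of outsourcing the key computation to \Cref{lem:1.9}; the paper's route makes \Cref{lem:U2} self-contained. There is no circularity in your approach, since \Cref{lem:1.9} is proved without reference to \Cref{lem:U2}, and it is stated for all $t\ge 1$, so the $t=1$ instance you need is already available.
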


\begin{proof}
    First, we consider the short exact sequence
    \begin{equation}
        0\to \frac{R}{(u_1,\dots,u_{n-1}):u_n}(-\delta)\xrightarrow{\cdot u_n} \frac{R}{(u_1,\dots,u_{n-1})}\to \frac{R}{U}\to 0.
        \label{eqn:a}
    \end{equation}
    Since $u_1,\dots,u_{n-1}$ form a regular sequence, we can read from its associated Koszul complex that $\reg(R/(u_1,\dots,u_{n-1}))=(n-1)(\delta-1)>\reg(R/U)$. Therefore, it follows from item \ref{lem5-2} of \Cref{lem5} that
    \[
        \reg\left(\frac{R}{(u_1,\dots,u_{n-1}):u_n}\right)=(n-1)(\delta-1)-\delta=n\delta-n-2\delta+1.
    \]

    In the following, we will prove the statements by a descending induction on $i$. For $i=n-1$, we look at the short exact sequence
    \[
        0\to \frac{R}{(u_1,\dots,u_{n-1}):u_n^2}(-2\delta)\xrightarrow{\cdot u_n^2} \frac{R}{(u_1,\dots,u_{n-1})}\to \frac{R}{(u_1,\dots,u_{n-1},u_n^2)}\to 0.
    \]
    Since $u_1,\dots,u_n$ form a $d$-sequence, we have $(u_1,\dots,u_{n-1}):u_n^2=(u_1,\dots,u_{n-1}):u_n$. As
    \[
        \reg\left(\frac{R}{(u_1,\dots,u_{n-1}):u_n^2}(-2\delta)\right)=n\delta-n+1>\reg\left(\frac{R}{(u_1,\dots,u_{n-1})}\right),
    \]
    it follows from item \ref{lem5-3} of \Cref{lem5} that
    \[
        \reg\left(\frac{R}{(u_1,\dots,u_{n-1})+U^2}\right)=\reg\left(\frac{R}{(u_1,\dots,u_{n-1})+u_n^2}\right)=n\delta-n.
    \]

    Next, we assume that $i\le n-2$ and the assertion holds for $i$. Thus, we turn to the short exact sequence
    \[
        0\to \frac{R}{((u_1,\dots,u_{i-1})+U^2):u_{i}}(-\delta)\xrightarrow{\cdot u_i} \frac{R}{(u_1,\dots,u_{i-1})+U^2}\to \frac{R}{(u_1,\dots,u_i)+U^2} \to 0.
    \]
    Since $u_1,\dots,u_{n-1}$ form a regular sequence and $u_1,\dots,u_n$ form a $d$-sequence, we obtain from \Cref{obs:1} that
    \[
        ((u_1,\dots,u_{i-1})+U^2):u_{i}=((u_1,\dots,u_{i-1}):u_i)+U^{2-1}=(u_1,\dots,u_{i-1})+U=U.
    \]
    By the inductive hypothesis that $\reg({R}/((u_1,\dots,u_i)+U^2))=n\delta-n$ and the assumption that $\reg(R/U)\le n\delta-n-\delta$, we obtain that
    \begin{align*}
        \reg\left(\frac{R}{((u_1,\dots,u_{i-1})+U^2):u_{i}}(-\delta)\right)
        =\reg\left(\frac{R}{U}\right)+\delta <\reg\left(\frac{R}{(u_1,\dots,u_i)+U^2}\right)+1.
    \end{align*}
    Thus, by applying item \ref{lem5-1} of \Cref{lem5}, we can get
    \[
        \reg\left(\frac{R}{(u_1,\dots,u_{i-1})+U^2}\right)= \reg\left(\frac{R}{(u_1,\dots,u_i)+U^2}\right)=n\delta-n,
    \]
    as claimed.
\end{proof}

\begin{Lemma}
    \label{lem:colon_n_minus_1}
    Let $R$ be a standard graded polynomial ring over a field $\kappa$ and $u_1,\dots,u_n\in R$ a homogeneous $d$-sequence of degree $\delta\ge 2$ such that $u_1,\dots,u_{n-1}$ form a regular sequence. Then, for $t\ge 1$, we have
    \[
        \reg\left(\frac{R}{((u_1,\dots,u_{n-1}):u_n)+(u_n^t)}\right)=\reg\left(\frac{R}{(u_1,\dots,u_{n-1}):u_n}\right)+\delta t-1.
    \]
\end{Lemma}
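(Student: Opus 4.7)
The plan is to reduce the claim to a direct application of \Cref{lem5}. Set $J := (u_1,\dots,u_{n-1}):u_n$. The heart of the argument will be showing that $u_n$ is a non-zero-divisor on $R/J$, which amounts to $J:u_n = J$. This should follow from condition (a) of the $d$-sequence definition applied with $i = n-1$ and $j = n = i+1$: it gives $(u_1,\dots,u_{n-1}):u_n^2 = (u_1,\dots,u_{n-1}):u_n = J$, and hence $J:u_n = (u_1,\dots,u_{n-1}):u_n^2 = J$. In particular every power $u_n^t$ is automatically regular on $R/J$ as well.

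With this regularity in hand, for each $t \ge 1$ I would simply write down the short exact sequence
\[
0 \longrightarrow (R/J)(-\delta t) \xrightarrow{\ \cdot u_n^t\ } R/J \longrightarrow R/(J + (u_n^t)) \longrightarrow 0
\]
induced by multiplication by $u_n^t$. Since $\reg((R/J)(-\delta t)) = \reg(R/J) + \delta t$ strictly exceeds $\reg(R/J)$ (because $\delta t \ge 2$), item \ref{lem5-3} of \Cref{lem5} yields
\[
\reg\bigl(R/(J + (u_n^t))\bigr) = \max\{\reg(R/J) + \delta t - 1,\, \reg(R/J)\} = \reg(R/J) + \delta t - 1,
\]
which is exactly the desired equality.

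The only real obstacle in this plan is the self-colon computation $J:u_n = J$, but that is precisely what axiom (a) of a $d$-sequence is designed to deliver (and it does not even require the regular-sequence hypothesis on $u_1,\dots,u_{n-1}$). Everything else is routine bookkeeping with the regularity inequalities in \Cref{lem5}, so no additional case analysis or induction on $t$ should be needed.
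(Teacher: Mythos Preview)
Your proof is correct and follows essentially the same approach as the paper: both set up the short exact sequence for multiplication by $u_n^t$ on $R/J$ with $J=(u_1,\dots,u_{n-1}):u_n$, identify the kernel as $R/J$ via the $d$-sequence identity $(u_1,\dots,u_{n-1}):u_n^{t+1}=(u_1,\dots,u_{n-1}):u_n$, and then apply item~\ref{lem5-3} of \Cref{lem5}. The only cosmetic difference is that the paper writes the left-hand term as $R/((u_1,\dots,u_{n-1}):u_n^{t+1})$ before simplifying, whereas you first phrase the same fact as ``$u_n$ is regular on $R/J$''.
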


\begin{proof}
    We consider the following exact sequence of graded modules:
    \begin{equation*}
        0\to \frac{R}{(u_1,\dots,u_{n-1}):u_n^{t+1}}(-\delta t)\xrightarrow{\cdot u_n^t} \frac{R}{(u_1,\dots,u_{n-1}):u_n} \to \frac{R}{((u_1,\dots,u_{n-1}):u_n)+(u_n^t)}\to 0.
    \end{equation*}
    Since $u_1,\dots,u_n$ form a $d$-sequence, one has $(u_1,\dots,u_{n-1}):u_n^{t+1}=(u_1,\dots,u_{n-1}):u_n$. By applying \ref{lem5-3} of \Cref{lem5} to the exact sequence above, we obtain the desired equality.
\end{proof}

Here is the last piece that we need for \Cref{thm-3}.

\begin{Lemma}
    \label{lem:1.9}
    Under the assumptions in \Cref{thm-3}, we have
    \[
        \reg\left(\frac{R}{((u_1,\dots,u_{n-1}):u_n)+(u_n^t)}\right)=n\delta-n-2\delta+\delta t
    \]
    for $t\ge 1$.
\end{Lemma}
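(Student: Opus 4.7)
The plan is to combine \Cref{lem:colon_n_minus_1} with a direct computation of $\reg(R/((u_1,\dots,u_{n-1}):u_n))$, which is essentially independent of $t$. By \Cref{lem:colon_n_minus_1}, it suffices to establish
\[
    \reg\left(\frac{R}{(u_1,\dots,u_{n-1}):u_n}\right)= n\delta-n-2\delta+1,
\]
since then the desired regularity equals $(n\delta-n-2\delta+1) + \delta t - 1 = n\delta-n-2\delta+\delta t$, as claimed.

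To compute $\reg(R/((u_1,\dots,u_{n-1}):u_n))$, I would exploit the short exact sequence
\[
    0\to \frac{R}{(u_1,\dots,u_{n-1}):u_n}(-\delta)\xrightarrow{\cdot u_n} \frac{R}{(u_1,\dots,u_{n-1})}\to \frac{R}{U}\to 0,
\]
already used at the opening of the proof of \Cref{lem:U2}. Since $u_1,\dots,u_{n-1}$ is a homogeneous regular sequence in degree $\delta$, the associated Koszul complex gives $\reg(R/(u_1,\dots,u_{n-1}))=(n-1)(\delta-1)=n\delta-n-\delta+1$. Combined with the standing assumption $\reg(R/U)\le n\delta-n-\delta$, we obtain the strict inequality $\reg(R/U)<\reg(R/(u_1,\dots,u_{n-1}))$, so \Cref{lem5}\ref{lem5-2} applies and yields
\[
    \reg\left(\frac{R}{(u_1,\dots,u_{n-1}):u_n}(-\delta)\right)=\reg\left(\frac{R}{(u_1,\dots,u_{n-1})}\right)= n\delta-n-\delta+1.
\]
Shifting by $-\delta$ gives precisely $\reg(R/((u_1,\dots,u_{n-1}):u_n))=n\delta-n-2\delta+1$.

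There is no real obstacle here: the argument is a direct assembly of \Cref{lem:colon_n_minus_1}, the Koszul computation of $\reg(R/(u_1,\dots,u_{n-1}))$, and the hypothesis controlling $\reg(R/U)$. The only point requiring care is making sure the hypothesis $\reg(R/U)\le n\delta-n-\delta$ is strong enough to guarantee the \emph{strict} inequality needed to invoke \Cref{lem5}\ref{lem5-2}, which it is because the Koszul bound on $\reg(R/(u_1,\dots,u_{n-1}))$ is exactly one larger than the posited bound on $\reg(R/U)$.
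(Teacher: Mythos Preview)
Your proof is correct and follows essentially the same route as the paper: both arguments first compute $\reg(R/((u_1,\dots,u_{n-1}):u_n))=n\delta-n-2\delta+1$ by applying \Cref{lem5}\ref{lem5-2} to the short exact sequence $0\to R/((u_1,\dots,u_{n-1}):u_n)(-\delta)\to R/(u_1,\dots,u_{n-1})\to R/U\to 0$, using the Koszul value $(n-1)(\delta-1)$ and the hypothesis $\reg(R/U)\le n\delta-n-\delta$, and then invoke \Cref{lem:colon_n_minus_1} to finish.
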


\begin{proof}
    Let us look at the short exact sequence of graded modules:
    \begin{equation*}
        0\to \frac{R}{(u_1,\dots,u_{n-1}):u_n}(-\delta)\xrightarrow{\cdot u_n} \frac{R}{(u_1,\dots,u_{n-1})}\to \frac{R}{(u_1,\dots,u_n)}\to 0.
    \end{equation*}
    Since the sequence $u_1, \dots,u_{n-1}$ is a regular sequence and each $\deg(u_i)=\delta$, we have
    \[
        \reg\left(\frac{R}{(u_1,\dots,u_{n-1})} \right)=(n-1)(\delta-1).
    \]
    Meanwhile, since $\reg(R/U)< (n-1)(\delta-1)$ by assumption, it follows from the item \ref{lem5-2} of \Cref{lem5} that
    \[
        \reg\left(\frac{R}{(u_1,\dots,u_{n-1}):u_n} \right)+\delta=\reg\left(\frac{R}{(u_1,\dots,u_{n-1})} \right),
    \]
    namely,
    \[
        \reg\left(\frac{R}{(u_1,\dots,u_{n-1}):u_n} \right)=n\delta-n-2\delta+1.
    \]
    After this, we can apply \Cref{lem:colon_n_minus_1}.
\end{proof}

Therefore, we complete the proof of \Cref{thm-3}. And by applying a similar technique, we can get the following result.

\begin{Theorem}
    \label{thm:HT}
    Let $R$ be a standard graded polynomial ring over a field $\kappa$ and $u_1,\dots,u_n\in R$ a homogeneous $d$-sequence of degree $\delta\ge 2$ such that $u_1,\dots,u_{n-1}$ form a regular sequence. Set $U=(u_1,\dots,u_n)$ and write $\reg(R/((u_1,\dots,u_{n-1}):u_n))=B$. If
    \[
        B\ge \max\Set{\reg\left(\frac{R}{U}\right)-\delta+1,n\delta-n-3\delta+2},
    \]
    then, for all $t\ge 2$ and $i=0,1,2,\dots,n-1$, we have
    \[
        \reg\left(\frac{R}{(u_1,\dots,u_i)+U^t}\right)=B+\delta t-1.
    \]
    In particular, $\reg(R/U^t)= B+\delta t-1$ for $t\ge 2$.
\end{Theorem}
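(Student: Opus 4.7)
The plan is to follow the strategy of \Cref{thm-3}, with the explicit constant $n\delta-n-2\delta+\delta t$ there replaced by $B+\delta t-1$. I would induct on $t\ge 2$, treating the base case $t=2$ separately (as was done in \Cref{lem:U2}), and for each $t$ run a descending induction on $i$ from $n-1$ down to $0$. The engine is the short exact sequence \eqref{eqn:SES-1} together with the colon formula \eqref{eqn:colon-1} of \Cref{obs:1}. A crucial input is \Cref{lem:colon_n_minus_1}, which yields
\[
\reg\left(\frac{R}{((u_1,\dots,u_{n-1}):u_n)+(u_n^s)}\right)=B+\delta s-1
\]
for all $s\ge 1$, with no additional hypothesis on $\reg(R/U)$.

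For the base case $t=2$, I start at $i=n-1$ with
\[
0\to \frac{R}{(u_1,\dots,u_{n-1}):u_n^2}(-2\delta)\xrightarrow{\cdot u_n^2} \frac{R}{(u_1,\dots,u_{n-1})}\to \frac{R}{(u_1,\dots,u_{n-1})+u_n^2}\to 0.
\]
The $d$-sequence property turns the colon into $(u_1,\dots,u_{n-1}):u_n$, so the left term has regularity $B+2\delta$ while the middle has regularity $(n-1)(\delta-1)$ by the Koszul resolution. The hypothesis $B\ge n\delta-n-3\delta+2$ is precisely $B+2\delta>(n-1)(\delta-1)$, so item \ref{lem5-3} of \Cref{lem5} produces the quotient regularity $B+2\delta-1$. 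Then a descending induction on $i\le n-2$ uses \eqref{eqn:SES-1}: the colon simplifies to $U$ because $u_1,\dots,u_{n-1}$ is a regular sequence, and the hypothesis $B\ge \reg(R/U)-\delta+1$ keeps the left term's regularity $\reg(R/U)+\delta$ strictly below $B+2\delta$, making item \ref{lem5-1} of \Cref{lem5} applicable.

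For the inductive step $t\ge 3$, at $i=n-1$ the colon in \eqref{eqn:SES-1} simplifies via \eqref{eqn:colon-1} to $((u_1,\dots,u_{n-1}):u_n)+U^{t-1}$; since $(u_1,\dots,u_{n-1})$ lies inside the colon, the $U^{t-1}$ piece collapses to $(u_n^{t-1})$, and \Cref{lem:colon_n_minus_1} then gives the shifted left regularity $B+\delta t-1$, which dominates $\reg(R/U)+1$ by the standing hypothesis on $B$, so item \ref{lem5-1} of \Cref{lem5} concludes. For $i\le n-2$, the colon becomes $(u_1,\dots,u_i)+U^{t-1}$ by the regular-sequence property, yielding shifted left regularity $B+\delta t-1$ by induction on $t$; the right term has the same regularity by induction on $i$, and a last application of item \ref{lem5-1} of \Cref{lem5} gives the claim.

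The only real difficulty is the numerical bookkeeping: at each invocation of \Cref{lem5} one must verify the required non-equality hypothesis, and track how the two assumptions on $B$ are consumed (the secondary bound $B\ge n\delta-n-3\delta+2$ is needed only in the base case $t=2$, $i=n-1$). Replacing the explicit constant of \Cref{thm-3} by the abstract $B+\delta t-1$ in fact streamlines the argument, since $B$ is by definition the regularity of the relevant colon quotient. In particular, the preparatory step \Cref{lem:1.9}, which converted that regularity to an explicit number under the stronger hypothesis $\reg(R/U)\le n\delta-n-\delta$, is not needed here.
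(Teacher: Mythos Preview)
Your proposal is correct and follows essentially the same approach as the paper: induction on $t$ with the base case $t=2$ handled separately (exactly as in \Cref{lem:HT_U2}), a descending induction on $i$ driven by the short exact sequence \eqref{eqn:SES-1} and \Cref{obs:1}, and \Cref{lem:colon_n_minus_1} supplying the regularity at $i=n-1$. Your bookkeeping on where each of the two lower bounds on $B$ is consumed matches the paper's use precisely.
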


\begin{proof}
    We will prove the assertions by induction on $t$. The $t=2$ case will be shown separately in \Cref{lem:HT_U2}. Thus, we may assume that $t\ge 3$, and prove the statements by descending induction on $i$. For $i=n-1$, we note that
    \[
        ((u_1,\dots,u_{n-1})+U^t):u_{n}=((u_1,\dots,u_{n-1}):u_{n})+U^{t-1}=((u_1,\dots,u_{n-1}):u_n)+(u_n^{t-1})
    \]
    by \Cref{obs:1}, and $(u_1,\dots,u_{n})+U^t=U$. Furthermore, by \Cref{lem:colon_n_minus_1}, we have
    \[
        \reg\left(\frac{R}{((u_1,\dots,u_{n-1}):u_n)+(u_n^{t-1})}\right)=B+\delta t-\delta-1.
    \]
    Since $B\ge \reg(R/U)-\delta+1$ while $t\ge 3$, after applying \ref{lem5-1} of \Cref{lem5} to the exact sequence \eqref{eqn:SES-1} with $j=n-1$, we obtain
    \[
        \reg\left(\frac{R}{(u_1,\dots,u_{n-1})+U^t}\right)=B+\delta t-1.
    \]

    Next, we assume that $i\le n-1$ and the assertion holds for $i$.
    Note that
    \begin{align*}
        \reg\left(\frac{R}{((u_1,\dots,u_{i-1})+U^t):u_{i}}(-\delta )\right)
        =\reg\left(\frac{R}{(u_1,\dots,u_{i-1})+U^{t-1}}\right)+\delta
        =B+\delta t-1
    \end{align*}
    by the induction on $t$ and the assumption that $\reg({R}/({(u_1,\dots,u_{i})+U^t}))=B+\delta t-1$. Applying again item \ref{lem5-1} of \Cref{lem5} to the exact sequence \eqref{eqn:SES-1} with $j=i-1$, one has
    \[
        \reg\left(\frac{R}{(u_1,\dots,u_{i-1})+U^t}\right)=B+\delta t-1.
    \]
    And this completes the proof.
\end{proof}

Below is the $t=2$ case for \Cref{thm:HT}, treated separately.

\begin{Lemma}
    \label{lem:HT_U2}
    Under the assumptions in \Cref{thm:HT}, we have
    \[
        \reg\left(\frac{R}{(u_1,\dots,u_i)+U^2}\right)=B+2\delta-1
    \]
    for $i=0,1,\dots, n-1$. 
\end{Lemma}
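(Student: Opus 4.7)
The plan is to mirror the structure of the proof of \Cref{lem:U2}, proceeding by descending induction on $i$ from $n-1$ down to $0$, with the degree bound on $B$ playing the role previously played by the bound $\reg(R/U) \le n\delta - n - \delta$.

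For the base case $i = n-1$, since $(u_1,\dots,u_{n-1}) + U^2 = (u_1,\dots,u_{n-1},u_n^2)$, I would consider the short exact sequence
\[
    0 \to \frac{R}{(u_1,\dots,u_{n-1}):u_n^2}(-2\delta) \xrightarrow{\cdot u_n^2} \frac{R}{(u_1,\dots,u_{n-1})} \to \frac{R}{(u_1,\dots,u_{n-1}) + U^2} \to 0.
\]
The $d$-sequence property gives $(u_1,\dots,u_{n-1}):u_n^2 = (u_1,\dots,u_{n-1}):u_n$, so the leftmost module has regularity $B + 2\delta$. The middle module, resolved by the Koszul complex on the regular sequence $u_1,\dots,u_{n-1}$, has regularity $(n-1)(\delta-1)$. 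The hypothesis $B \ge n\delta - n - 3\delta + 2$ translates to $B + 2\delta > (n-1)(\delta-1)$, so item \ref{lem5-3} of \Cref{lem5} yields the desired equality $\reg(R/((u_1,\dots,u_{n-1}) + U^2)) = B + 2\delta - 1$.

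For the inductive step, assuming the assertion for $i \le n-1$, I would look at
\[
    0 \to \frac{R}{((u_1,\dots,u_{i-1}) + U^2):u_i}(-\delta) \xrightarrow{\cdot u_i} \frac{R}{(u_1,\dots,u_{i-1}) + U^2} \to \frac{R}{(u_1,\dots,u_i) + U^2} \to 0.
\]
By \Cref{obs:1} together with the fact that $u_1,\dots,u_{n-1}$ is a regular sequence, one computes
\[
    ((u_1,\dots,u_{i-1}) + U^2):u_i = ((u_1,\dots,u_{i-1}):u_i) + U = (u_1,\dots,u_{i-1}) + U = U.
\]
Hence the leftmost module is $(R/U)(-\delta)$, whose regularity is at most $\reg(R/U) + \delta \le B + 2\delta - 1$ by the standing hypothesis $B \ge \reg(R/U) - \delta + 1$. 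In particular its regularity is strictly less than $\reg(R/((u_1,\dots,u_i) + U^2)) + 1 = B + 2\delta$, so item \ref{lem5-1} of \Cref{lem5} applies and gives $\reg(R/((u_1,\dots,u_{i-1}) + U^2)) = B + 2\delta - 1$.

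The only place that requires real care is the base case, where the inequality $B + 2\delta > (n-1)(\delta-1)$ must be strict in order to invoke the equality clause of item \ref{lem5-3}; but this is exactly encoded in the $B \ge n\delta - n - 3\delta + 2$ part of the hypothesis. Everything else is a direct parallel of the argument in \Cref{lem:U2}, with the roles of $n\delta - n$ and $\reg(R/U) + \delta$ replaced by $B + 2\delta - 1$ and $\reg(R/U) + \delta$, respectively.
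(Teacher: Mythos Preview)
Your proposal is correct and follows essentially the same approach as the paper: descending induction on $i$, the same short exact sequences for the base case $i=n-1$ and for the inductive step, the same identification of the colon ideals via the $d$-sequence property and \Cref{obs:1}, and the same applications of items \ref{lem5-3} and \ref{lem5-1} of \Cref{lem5}. Your explicit verification that $B \ge n\delta - n - 3\delta + 2$ is exactly the strict inequality $B + 2\delta > (n-1)(\delta-1)$ needed in the base case is a nice touch that the paper leaves implicit.
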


\begin{proof}
    %
    We will prove the statements by descending induction on $i$. For $i=n-1$, we look at the short exact sequence
    \[
        0\to \frac{R}{(u_1,\dots,u_{n-1}):u_n^2}(-2\delta)\xrightarrow{\cdot u_n^2} \frac{R}{(u_1,\dots,u_{n-1})}\to \frac{R}{(u_1,\dots,u_{n-1},u_n^2)}\to 0.
    \]
    Since $u_1,\dots,u_n$ form a $d$-sequence, we have $(u_1,\dots,u_{n-1}):u_n^2=(u_1,\dots,u_{n-1}):u_n$. As
    \[
        \reg\left(\frac{R}{(u_1,\dots,u_{n-1}):u_n^2}(-2\delta)\right)=B+2\delta>\reg\left(\frac{R}{(u_1,\dots,u_{n-1})}\right)=(n-1)(\delta-1)
    \]
    by our assumption on $B$, it follows from item \ref{lem5-3} of \Cref{lem5} that
    \[
        \reg\left(\frac{R}{(u_1,\dots,u_{n-1})+U^2}\right)=\reg\left(\frac{R}{(u_1,\dots,u_{n-1},u_n^2)}\right)=B+2\delta-1.
    \]

    Next, we assume that $i\le n-2$ and the statement holds for $i$. Thus, we turn to the short exact sequence
    \[
        0\to \frac{R}{((u_1,\dots,u_{i-1})+U^2):u_{i}}(-\delta)\xrightarrow{\cdot u_i} \frac{R}{(u_1,\dots,u_{i-1})+U^2}\to \frac{R}{(u_1,\dots,u_i)+U^2} \to 0.
    \]
    Since $u_1,\dots,u_{n-1}$ form a regular sequence and $u_1,\dots,u_n$ form a $d$-sequence, we obtain from \Cref{obs:1} that
    \[
        ((u_1,\dots,u_{i-1})+U^2):u_{i}=((u_1,\dots,u_{i-1}):u_i)+U^{2-1}=(u_1,\dots,u_{i-1})+U=U.
    \]
    By
    the inductive hypothesis that $\reg({R}/((u_1,\dots,u_{i})+U^2)) =B+2\delta -1$ and the assumption that $\reg(R/U)\le B+\delta-1$, we have
    \[
        \reg\left(\frac{R}{((u_1,\dots,u_{i-1})+U^2):u_{i}}(-\delta)\right)
        <\reg\left(\frac{R}{(u_1,\dots,u_i)+U^2}\right)+1.
    \]
    Thus, by applying item \ref{lem5-1} of \Cref{lem5}, we can get
    \[
        \reg\left(\frac{R}{(u_1,\dots,u_{i-1})+U^2}\right)= \reg\left(\frac{R}{(u_1,\dots,u_i)+U^2}\right)=B+2\delta-1,
    \]
    as claimed.
\end{proof}

\begin{Corollary}
    \label{cor:HT}
    Let $R$ be a standard graded polynomial ring over a field $\kappa$ and $u_1,\dots,u_n\in R$ a homogeneous $d$-sequence of degree $\delta\ge 2$ such that $u_1,\dots,u_{n-1}$ form a regular sequence. Set $U=(u_1,\dots,u_n)$ and assume that $\reg({R}/{U})\ne (n-1)(\delta-1)$. Then, for all $t\ge 2$, we have
    \[
        \reg\left(\frac{R}{U^t}\right)=\reg\left(\frac{R}{(u_1,\dots,u_{n-1}):u_{n}}\right)+\delta t-1.
    \]
    In particular, if $\reg({R}/{U})\ge n\delta-n-\delta+2$ or $\reg({R}/{U})=n\delta-n-\delta$, then, for all $t\ge 1$, we have
    \[
        \reg\left(\frac{R}{U^t}\right)=\reg\left(\frac{R}{U}\right)+\delta t-\delta.
    \]
\end{Corollary}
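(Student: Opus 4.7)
The plan is to deduce the corollary directly from \Cref{thm:HT} by first computing $B \coloneqq \reg(R/((u_1,\dots,u_{n-1}):u_n))$ in closed form and then verifying the two hypotheses of that theorem. The starting point is the same short exact sequence used in \Cref{lem:1.9}, namely
\[
    0 \to \frac{R}{(u_1,\dots,u_{n-1}):u_n}(-\delta) \xrightarrow{\cdot u_n} \frac{R}{(u_1,\dots,u_{n-1})} \to \frac{R}{U} \to 0.
\]
Since $u_1,\dots,u_{n-1}$ is a regular sequence of forms of degree $\delta$, the Koszul complex gives $\reg(R/(u_1,\dots,u_{n-1})) = (n-1)(\delta-1)$. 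The hypothesis $\reg(R/U) \ne (n-1)(\delta-1)$ is precisely what is needed to invoke item \ref{lem5-2} of \Cref{lem5}.

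First I would split into two cases. If $\reg(R/U) > (n-1)(\delta-1)$, then item \ref{lem5-2} of \Cref{lem5} gives $B + \delta = \reg(R/U)+1$, hence $B = \reg(R/U) - \delta + 1$. If instead $\reg(R/U) < (n-1)(\delta-1)$, the same item yields $B + \delta = (n-1)(\delta-1)$, i.e.\ $B = n\delta - n - 2\delta + 1$. Next I would verify the two inequalities required by \Cref{thm:HT}. The bound $B \ge n\delta - n - 3\delta + 2$ is immediate from $\delta \ge 2$ in both cases. The bound $B \ge \reg(R/U) - \delta + 1$ is an equality in the first case, and in the second case follows from $\reg(R/U) \le (n-1)(\delta-1) - 1$ (integers), which gives $\reg(R/U) - \delta + 1 \le n\delta - n - 2\delta + 1 = B$. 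Hence \Cref{thm:HT} applies and produces $\reg(R/U^t) = B + \delta t - 1$ for all $t \ge 2$, which is the first displayed formula.

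For the second assertion, the condition $\reg(R/U) \ge n\delta - n - \delta + 2$ places us strictly in the first regime above (since $(n-1)(\delta-1) = n\delta - n - \delta + 1$), while $\reg(R/U) = n\delta - n - \delta$ puts us in the second regime at its top boundary; in both situations the displayed formulas for $B$ coincide with $\reg(R/U) - \delta + 1$. Substituting into the first assertion gives $\reg(R/U^t) = \reg(R/U) + \delta t - \delta$ for $t \ge 2$, and the $t=1$ case is a tautology.

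There is no genuine technical obstacle here because \Cref{thm:HT} has already absorbed the double induction on $t$ and $i$; the work is purely bookkeeping. The only subtle point worth flagging is the second regime, where one must use that $\reg(R/U)$ and $(n-1)(\delta-1)$ are integers to pass from a strict to a non-strict inequality, and also recognise that the equality $B = \reg(R/U) - \delta + 1$ is in general strict there, only becoming an equality at the boundary value $\reg(R/U) = n\delta - n - \delta$ needed in the ``in particular'' clause.
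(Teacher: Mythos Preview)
Your proof is correct and follows essentially the same route as the paper: both use the short exact sequence \eqref{eqn:a} together with item \ref{lem5-2} of \Cref{lem5} to compute $B$, verify the hypotheses of \Cref{thm:HT}, and then substitute. The only cosmetic difference is that the paper records the result of \Cref{lem5}\ref{lem5-2} as the single equation $B+\delta=\max\{(n-1)(\delta-1),\reg(R/U)+1\}$ rather than splitting into your two cases, but the content is identical.
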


\begin{proof}
    Since $u_1,\dots,u_{n-1}$ form a regular sequence, one has $\reg(R/(u_1,\dots,u_{n-1}))=(n-1)(\delta-1)$. Applying item \ref{lem5-2} of \Cref{lem5} to the short exact sequence \eqref{eqn:a}, we have
    \begin{equation}
        \reg\left(\frac{R}{(u_1,\dots,u_{n-1}):u_{n}}\right)+\delta=\max\Set{ \reg\left(\frac{R}{(u_1,\dots,u_{n-1})}\right),\reg\left(\frac{R}{U}\right)+1}.
        \label{eqn:colon_un}
    \end{equation}
    Since $\delta\ge 2$, this implies that
    \[
        \reg\left(\frac{R}{(u_1,\dots,u_{n-1}):u_{n}}\right)\ge \max\Set{n\delta-n-3\delta+2,\reg\left(\frac{R}{U}\right)-\delta+1}.
    \]
    Hence, for all $t\ge 2$, it follows from \Cref{thm:HT} that
    \begin{equation}
        \reg\left(\frac{R}{U^t}\right)=\reg\left(\frac{R}{(u_1,\dots,u_{n-1}):u_{n}}\right)+\delta t-1.
        \label{eqn:b}
    \end{equation}	

    In particular, if $\reg({R}/{U})\ge n\delta-n-\delta+2$ or $\reg({R}/{U})=n\delta-n-\delta$, then obviously $\reg({R}/{U})\ne (n-1)(\delta-1)$. Meanwhile, we will have
    \[	
        \reg\left(\frac{R}{(u_1,\dots,u_{n-1}):u_{n}}\right)=\reg\left(\frac{R}{U}\right)-\delta+1
    \]
    from the equation \eqref{eqn:colon_un}. It remains to substitute this into the equation \eqref{eqn:b} to obtain the desired results.
\end{proof}

\begin{Corollary}
    \label{cor:thm_3_general}
    \label{cor:HT_general}
    Let $R$ be a standard graded polynomial ring over an infinite field $\kappa$. Suppose that $U\subset R$ is an equigenerated almost complete intersection ideal by some forms of degree $\delta\ge 2$ with $\Ht(U)=n-1$.
    \begin{enumerate}[a]
        \item If $\reg(R/U)\le n\delta-n-\delta$, then
            \[
                \reg\left(\frac{R}{U^t}\right)=n\delta-n+\delta t-2\delta
            \]
            for all $t\ge 2$.
        \item If $\reg({R}/{U})\ge n\delta-n-\delta+2$, then
            \[
                \reg\left(\frac{R}{U^t}\right)=\reg\left(\frac{R}{U}\right)+\delta t-\delta
            \]
            for all $t\ge 1$.
    \end{enumerate}
\end{Corollary}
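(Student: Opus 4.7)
The plan is to reduce directly to the two technical results already proved in this section, namely \Cref{thm-3} and \Cref{cor:HT}. The key preliminary observation is that the hypotheses of the corollary (equigenerated almost complete intersection of height $n-1$ over an infinite base field) are precisely those arranged in \cite[Proposition 4.10]{JKS2}, which guarantees that after a general $\kappa$-linear change of the chosen homogeneous generators, we may write $U=(u_1,\dots,u_n)$ where $u_1,\dots,u_n$ is a $d$-sequence of forms of degree $\delta$ and $u_1,\dots,u_{n-1}$ is a regular sequence. Both $\reg(R/U)$ and $\reg(R/U^t)$ are independent of the choice of minimal generators, so no information is lost in this reduction; this is the step where I expect the main conceptual content to lie, but since it is an invocation of an external result, it is routine here.

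Once the $d$-sequence structure is in place, part (a) is immediate: its numerical hypothesis $\reg(R/U)\le n\delta-n-\delta$ is literally the standing assumption of \Cref{thm-3}, whose conclusion asserts $\reg(R/U^t)=n\delta-n-2\delta+\delta t$ for all $t\ge 2$, exactly the claimed formula.

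For part (b), I would feed the data into \Cref{cor:HT}. The assumption $\reg(R/U)\ge n\delta-n-\delta+2$ implies in particular $\reg(R/U)\neq (n-1)(\delta-1)$, so \Cref{cor:HT} applies and, via the short exact sequence \eqref{eqn:a} together with the computation $\reg(R/(u_1,\dots,u_{n-1}))=(n-1)(\delta-1)$, one obtains
\[
\reg\!\left(\frac{R}{(u_1,\dots,u_{n-1}):u_n}\right)=\reg\!\left(\frac{R}{U}\right)-\delta+1.
\]
Plugging this into the main conclusion of \Cref{cor:HT} produces
\[
\reg(R/U^t)=\reg(R/U)+\delta t-\delta
\]
for every $t\ge 2$. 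The $t=1$ case is the tautology $\reg(R/U)=\reg(R/U)$, so the formula extends to all $t\ge 1$ as stated.

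The only step that could pose an obstacle is confirming that the reduction from an arbitrary minimal generating set to a $d$-sequence with the regular-sequence property preserves the numerical hypothesis on $\reg(R/U)$; but since changing generators does not change the ideal $U$ itself, this is automatic. Thus the corollary is essentially a clean packaging of \Cref{thm-3} and \Cref{cor:HT} under the infinite-field hypothesis.
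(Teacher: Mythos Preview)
Your proposal is correct and matches the paper's own proof essentially line for line: invoke \cite[Proposition 4.10]{JKS2} to obtain the $d$-sequence generators, then feed the two numerical hypotheses into \Cref{thm-3} and \Cref{cor:HT} respectively. The only difference is that for part (b) you spell out the intermediate computation of $\reg(R/((u_1,\dots,u_{n-1}):u_n))$, whereas the paper simply quotes the ``in particular'' clause of \Cref{cor:HT}, which already handles the case $\reg(R/U)\ge n\delta-n-\delta+2$ directly.
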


\begin{proof}
    It follows from \cite[Proposition 4.10]{JKS2} that there exists a  system of homogeneous generators $\{u_1, \dots, u_{n}\}$ of $U$ such that $u_1, \dots , u_{n-1}$ is a regular sequence while $u_1, \dots , u_{n}$ is a $d$-sequence.
    The desired results follow from \Cref{thm-3} and \Cref{cor:HT} respectively now.
\end{proof}

\section{Graphs with almost complete intersection binomial edge ideals}

In this section, we will study the regularity of powers of the binomial edge ideal of a graph $G$ whose binomial edge ideal $J_G$ is an almost complete intersection. Without loss of generality, suppose that the vertex set of $G$ is $[n]$. Let $S=S_G=\kappa[x_i,y_i\mid 1\le i\le n]$ be a standard graded polynomial ring in $2n$ variables over an infinite field $\kappa$. And for simplicity, if $e=\{i,j\}$ is an edge of a graph $G$, we write ${f}_e={f}_{i,j}\coloneqq x_iy_j-x_jy_i$. Consequently, the binomial edge ideal $J_G=(f_e\mid e\in E(G))$ in $S$.
It is well-known that for a simple graph $G$, $J_G$ is complete intersection if and only if all connected components of $G$ are paths (see \cite{EHH}).

Let $G$ be a graph and $v$ be a \emph{cut vertex} in $G$. By definition, this simply means $G\setminus v$ has more connected components than $G$.  Let $G_1,\ldots,G_k$ be the components of $G\setminus \{v\}$ and $G'_i=G[V(G_i)\cup \{v\}]$, the subgraph of $G$ induced by $V(G_i)\cup \{v\}$. Then, $G'_1,\ldots, G'_k$ form the \emph{split of $G$ at $v$}. 
Meanwhile, recall that a vertex $v$ of $G$ is said to be a \emph{free vertex} if $v$ is contained in only one maximal clique; otherwise it is called an \emph{internal vertex}.

\begin{Lemma}
    \label{lem:reduction_by_path}
    Suppose that the induced subgraph $G'$ and path $P_n$ form a split of a graph $G$ at the vertex $v$. If $v$ is the end vertex of $P_n$, and is also a free vertex of $G'$, then $\reg(S/J_G) = \reg(S/J_{G'}) + n-1$.
\end{Lemma}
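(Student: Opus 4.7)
The plan is to reduce the claim to the standard additivity formula for the regularity of binomial edge ideals under identification at a shared free vertex. First, by the hypothesis on the split, $G$ is the union of the induced subgraphs $G'$ and $P_n$, meeting exactly at the vertex $v$. Moreover $v$ is free in $G'$ by assumption, and since $v$ is an endpoint of $P_n$ it lies in exactly one edge of $P_n$---which is its unique maximal clique in $P_n$---so $v$ is automatically a free vertex of $P_n$ as well. Thus the hypotheses of the additivity lemma are satisfied on both sides.

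Next I would invoke the additivity formula: if $G_1$ and $G_2$ share only a vertex $v$ that is free in both, then
\[
    \reg(S/J_{G_1 \cup G_2}) = \reg(S/J_{G_1}) + \reg(S/J_{G_2}),
\]
a well-known result in the binomial edge ideal literature. Applied to $G_1 = G'$ and $G_2 = P_n$, this yields $\reg(S/J_G) = \reg(S/J_{G'}) + \reg(S/J_{P_n})$. To finish, I would use $\reg(S/J_{P_n}) = n-1$: since $P_n$ is a complete intersection graph (as recalled at the start of this section), $J_{P_n}$ is generated by the regular sequence $f_{1,2}, f_{2,3}, \ldots, f_{n-1,n}$ of $n-1$ quadrics, whose Koszul resolution has regularity $(n-1)(2-1)=n-1$. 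Substituting gives the claimed identity.

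The chief obstacle is pinning down a convenient form of the additivity formula; should no direct citation match the hypotheses at hand, the fallback is an induction on $n$ (the base case $n=1$ being vacuous since then $G=G'$), where the inductive step shows that attaching a pendant edge at a free vertex raises $\reg(S/J)$ by exactly one. This would proceed via the short exact sequence
\[
    0 \to \bigl(S/(J_H \colon f_e)\bigr)(-2) \xrightarrow{\cdot f_e} S/J_H \to S/J_G \to 0
\]
with $e$ the new pendant edge and $J_G = J_H + (f_e)$, together with a combinatorial identification of the colon ideal $J_H \colon f_e$ (which collapses because of the freeness of the attachment vertex), and the regularity bookkeeping provided by \Cref{lem5}.
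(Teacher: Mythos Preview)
Your proposal is correct and follows essentially the same route as the paper: the paper simply cites \cite[Theorem 3.1]{JNR} for the additivity formula $\reg(S/J_G)=\reg(S/J_{G'})+\reg(S/J_{P_n})$ under gluing at a common free vertex, and then notes $\reg(S/J_{P_n})=n-1$ because $J_{P_n}$ is a complete intersection of $n-1$ quadrics. Your extra justification that $v$ is free in $P_n$ and your fallback induction are fine but unnecessary once the citation is in hand.
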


\begin{proof}
    It follows from \cite[Theorem 3.1]{JNR} that $\reg(S/J_G) = \reg(S/J_{G'}) + \reg(S/P_n)$. But since the quadratic ideal $J_{P_n}$ is a complete intersection, one has $\reg(S/J_{P_n})=n-1$.
\end{proof}

Recall that Jayanthan et al.~classified all connected graphs whose binomial edge ideals are almost complete intersection. 

\begin{Lemma}
    [{\cite[Theorems 4.3 and 4.4]{JKS2}}]
    \label{lem6}
    Let $G$ be a connected graph.
    \begin{enumerate}[i]
        \item If $G$ is not a tree, then $J_G$ is an almost complete intersection ideal if and only if $G$ is obtained by adding an edge between two vertices of a path or by attaching a path to each vertex of a $3$-cycle $C_3$.
        \item If $G$ is a tree but not a path, then $J_G$ is an almost complete intersection ideal if and only if $G$ is obtained by adding an edge between two vertices of two paths.
    \end{enumerate}	
\end{Lemma}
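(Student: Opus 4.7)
The plan is to translate the almost complete intersection condition $\mu(J_G) = \Ht(J_G)+1$ into a combinatorial restriction on $G$ and then to perform a case analysis guided by the topology of $G$. Using $\mu(J_G) = |E(G)|$ and the height formula $\Ht(J_G) = n - \gamma(G)$ with
\[
    \gamma(G) \coloneqq \max_{T \subseteq V(G)} \bigl(c(G[V\setminus T]) - |T|\bigr),
\]
extracted from the standard primary decomposition of $J_G$ (where $c(\cdot)$ counts connected components), the defining equality of an almost complete intersection reads $|E(G)| + \gamma(G) = n+1$. Since $\gamma(G) \geq 1$ for any connected graph on at least two vertices, this forces either $|E(G)| = n-1$ and $\gamma(G) = 2$ (the tree case (ii)), or $|E(G)| = n$ and $\gamma(G) = 1$ (the unicyclic case (i)). Beyond this numerical equality one must also verify that $J_G$ is locally a complete intersection at every minimal prime $P_T(G)$.

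For case (ii), since $G$ is a tree one has the identity
\[
    c(G[V\setminus T]) - |T| = 1 + \sum_{v \in T}(\deg_G v - 2) - e(T),
\]
where $e(T)$ counts edges with both endpoints in $T$. I would apply this first to singletons $T = \{v\}$, which forces $\deg_G v \leq 3$ at every vertex, and then to pairs $T = \{v, w\}$ of degree-$3$ vertices, which forces $v$ and $w$ to be adjacent. A short global argument then pins $G$ down to a tree with at most two branch vertices, the branch vertices (when two exist) being joined by an edge --- precisely the family obtained by joining two paths by a single extra edge, with paths of length zero allowed.

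For case (i), let $C$ denote the unique cycle of $G$. Including in $T$ any branch vertex of a pendant tree, together with appropriate neighbours on $C$, quickly produces $\gamma(G) \geq 2$, so every subtree dangling from $C$ must already be a path. A further case split on $|V(C)|$ then shows that when $|V(C)| \geq 4$ non-trivial pendant paths are allowed at only two \emph{adjacent} cycle vertices (recovering a path with an added chord), while when $|V(C)| = 3$ each of the three cycle vertices may independently carry a pendant path. The main obstacle in the proof is precisely this combinatorial case analysis: the right test sets $T$ have to be selected with care to exclude every unwanted configuration.

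Finally, for each candidate graph in the two families I would verify the local complete intersection condition at every minimal prime $P_T(G)$. After inverting the variables $x_k, y_k$ with $k \notin T$ that lie outside $P_T$, each generator $f_{ij} = x_iy_j - x_jy_i$ rewrites up to units either as a difference of ``slope'' quantities $x_k/y_k$ or as a linear form in $\{x_k, y_k : k \in T\}$. A direct linear-algebra count then exhibits inside $(J_G)_{P_T(G)}$ a regular sequence of length $\Ht P_T(G)$, simultaneously taking care of the local requirement in the ``only if'' direction and confirming the converse that each listed graph indeed has $J_G$ almost complete intersection.
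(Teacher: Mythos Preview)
The paper does not supply a proof of this lemma: it is quoted verbatim from \cite[Theorems~4.3 and~4.4]{JKS2} and used as a black box, so there is no argument in the present paper to compare against.

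Your proposed route is sound and is in fact close to how such classifications are carried out. The reduction $\mu(J_G)=|E(G)|$ and $\Ht(J_G)=n-\gamma(G)$, together with $|E(G)|\ge n-1$ and $\gamma(G)\ge 1$ for a connected $G$, correctly forces the dichotomy ``tree with $\gamma=2$'' versus ``unicyclic with $\gamma=1$''. Your tree formula $c(G\setminus T)-|T|=1+\sum_{v\in T}(\deg_G v-2)-e(T)$ is correct, and the singleton/pair tests do pin down the $T$-type and $H$-type trees. In the unicyclic case your test sets likewise recover exactly the graphs of the statement; it is worth noting explicitly that a cycle vertex carrying \emph{two} pendant paths is already excluded by the singleton test (this is implicit in ``the dangling subtree is a path'', but deserves a sentence). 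Since the numerical condition alone already cuts the candidates down to precisely the listed graphs, the local complete intersection verification is needed only for the ``if'' direction, and your localization sketch (inverting $x_k,y_k$ for $k\notin T$ and rewriting $f_{ij}$ as slope differences or as linear forms in the $T$-variables) is the standard and correct way to exhibit a regular sequence of the right length in $(J_G)_{P_T}$.
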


Then, the following fact can be observed with ease.

\begin{Corollary}
    Let $G=G_1\sqcup \cdots\sqcup G_k$ be a disjoint union of $k$ graphs. Then $J_G$ is almost complete intersection if and only if for some $i$, $J_{G_{i}}$ is almost complete intersection and for each $j\neq i$, $J_{G_{j}}$ is complete intersection.
\end{Corollary}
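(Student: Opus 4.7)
The plan is to combine the numerical criterion for almost complete intersections with a standard fact about ideals lying in disjoint variable blocks. Let $S_i$ denote the polynomial subring of $S$ on the variables indexed by $V(G_i)$, so that $S$ is the tensor product of the $S_i$ over $\kappa$. Each $J_{G_i}$ lives in $S_i$, and $J_G=J_{G_1}S+\cdots+J_{G_k}S$. From this one immediately obtains $\mu(J_G)=\sum_i\mu(J_{G_i})$, while flat base change yields both $\Ht(J_G)=\sum_i\Ht(J_{G_i})$ and a bijection between $\Min(J_G)$ and tuples $(\fq_1,\ldots,\fq_k)$ with $\fq_i\in\Min(J_{G_i})$, the prime corresponding to such a tuple being $\fp=\sum_i\fq_iS$.

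For the ``if'' direction, suppose $J_{G_i}$ is almost complete intersection and each $J_{G_j}$ with $j\neq i$ is complete intersection. Then the numerical difference $\mu(J_G)-\Ht(J_G)=\sum_j(\mu(J_{G_j})-\Ht(J_{G_j}))$ collapses to $1$, giving one of the two required conditions. At a minimal prime $\fp$ of $J_G$ corresponding to some $(\fq_1,\ldots,\fq_k)$, each localization $(J_{G_j})_\fp$ is complete intersection: automatically when $j\neq i$ (complete intersections localize), and by the almost complete intersection hypothesis at $\fq_i$ when $j=i$. Because these ideals sit in pairwise disjoint variable blocks, concatenating their regular sequence generators yields another regular sequence, so $(J_G)_\fp$ is complete intersection.

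For the ``only if'' direction, assume $J_G$ is almost complete intersection. The numerical condition becomes $\sum_i(\mu(J_{G_i})-\Ht(J_{G_i}))=1$; since each summand is a nonnegative integer by Krull's height theorem, exactly one index $i$ satisfies $\mu(J_{G_i})=\Ht(J_{G_i})+1$ while every other $J_{G_j}$ is a complete intersection. To upgrade this distinguished $J_{G_i}$ to an almost complete intersection, pick any $\fq_i\in\Min(J_{G_i})$ and extend it to a tuple $(\fq_1,\ldots,\fq_k)$ of minimal primes in the remaining factors. The corresponding $\fp$ makes $(J_G)_\fp$ complete intersection; since the summands $(J_{G_j})_\fp$ continue to sit in pairwise disjoint variable blocks, each of them---in particular $(J_{G_i})_{\fq_i}$---must also be complete intersection.

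The only genuine point to check carefully is the disjoint-variables assertion that a sum of ideals in pairwise disjoint variable blocks is complete intersection precisely when each summand is. This is standard: a regular sequence in one block remains regular upon adjoining one from a disjoint block, and conversely the Koszul complex of the sum factors as a tensor product of the Koszul complexes of the summands, from which complete intersection status can be read off. Beyond this bookkeeping, no serious obstacle is expected.
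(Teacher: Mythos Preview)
Your argument is correct. The paper itself offers no proof of this corollary; it simply remarks that the statement ``can be observed with ease'' immediately after quoting the classification of connected graphs with almost complete intersection binomial edge ideals, so there is nothing to compare against. Your route is in fact more informative than an appeal to the classification, since it works directly from the definition of almost complete intersection and makes no use of Lemma~\ref{lem6}.

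One small point worth tightening: after localizing at $\fp$ the ring $S_\fp$ is no longer a polynomial ring, so the phrase ``the summands $(J_{G_j})_\fp$ continue to sit in pairwise disjoint variable blocks'' is informal. The precise statement you need is that $J_{G_j}S \cap \sum_{l\ne j} J_{G_l}S = J_{G_j}S \cdot \sum_{l\ne j} J_{G_l}S$ in $S$ (an easy consequence of the tensor decomposition $S = S_1 \otimes_\kappa \cdots \otimes_\kappa S_k$), which after localization yields $I_j \cap \sum_{l\ne j} I_l \subseteq \fm I_j$; this forces $\mu\bigl((J_G)_\fp\bigr) = \sum_j \mu(I_j)$, and then Krull's inequality $\mu(I_j) \ge \Ht(I_j)$ pins each $\mu(I_j)$ down to $\Ht(I_j)$. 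Your Koszul tensor-product heuristic in the final paragraph is pointing at the same phenomenon, and as you say there is no serious obstacle here.
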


Notice that if $\Char(\kappa)=0$, then all regularity computations of powers of ideals can be reduced to the case when $G$ is connected, by \cite[Lemma 4.1, Proposition 5.1]{MR3912960}. Therefore, in the following, we will only consider the case when $G$ is a connected graph in \Cref{lem6}.

\subsection{$G_2$-type cases}
Earlier in \cite{JKS}, two graphs on some vertex set $[m]$, called $G_1$ and $G_2$ respectively, were considered; see also the graphs displayed in \Cref{Fig:3}. Related, in this subsection, let $G$ is a graph on $[n]$ obtained by adding an edge between two vertices $i_0$ and $j_0$ of a path $P_n$ as in \Cref{lem6}. If the two vertices $i_0$ and $j_0$ are precisely the two end points of the path, then we obtain a cycle $C_n$. This case has already been studied in \cite[Theorem 3.6]{JKS}. If precisely one such vertex, say $i_0$, is an end vertex of $P_n$, then $G$ is obtained by identifying an end vertex of a path with the pendant vertex of $G_1$ (the vertex $1$ of $G_1$ in \Cref{Fig:3}). Whence, we will call it of \emph{$G_1$-type}. We can also call it a \emph{balloon graph} for the obvious reason. And this case has also been considered in \cite[Remark 3.15]{JKS}. Therefore, it remains to deal with the case when neither $i_0$ nor $j_0$ is an end vertex of $P_n$. Whence, $G$ is obtained by identifying each of the pendant vertices of $G_2$ with an end vertex of a new path respectively. And we will call it of \emph{$G_2$-type}.

\begin{figure}[htbp]%
    \centering
    \parbox[b]{.35\textwidth}{\centering
        \begin{tikzpicture}[thick, scale=1.5, every node/.style={scale=1.2}]]
            \foreach \x in {1,...,10} {
                \shade [shading=ball, ball color=black] (\x*36:1) circle (.07);
                \draw (\x*36:1) -- (\x*36+36:1);
            }
            \node at (3*36:1) [above left] {\scriptsize$2$};
            \node at (2*36:1) [above right] {\scriptsize$m$};
            \draw [thick] (3*36:1) -- +(-0.1,0.6) node [above left] {\scriptsize $1$};
            \shade [shading=ball, ball color=black] (3*36:1)+(-0.1,0.6) circle (.07);
        \end{tikzpicture}
        \subcaption*{$G_1$}
    } \qquad
    \parbox[b]{.35\textwidth}{\centering
        \begin{tikzpicture}[thick, scale=1.5, every node/.style={scale=1.2}]]
            \foreach \x in {1,...,10} {
                \shade [shading=ball, ball color=black] (\x*36:1) circle (.07);
                \draw (\x*36:1) -- (\x*36+36:1);
            }
            \node at (3*36:1) [above left] {\scriptsize$2$};
            \node at (2*36:1) [above right] {\scriptsize$m-1$};
            \draw [thick] (2*36:1) -- +(0.1,0.6) node [above right] {\scriptsize $m$};
            \draw [thick] (3*36:1) -- +(-0.1,0.6) node [above left] {\scriptsize $1$};
            \shade [shading=ball, ball color=black] (2*36:1)+(0.1,0.6) circle (.07);
            \shade [shading=ball, ball color=black] (3*36:1)+(-0.1,0.6) circle (.07);
        \end{tikzpicture}
        \subcaption*{$G_2$}
    }
    \caption{\ }
    \label{Fig:3}
\end{figure}

\begin{Theorem}
    \label{thm:F3-reg}
    Let $G$ be a connected graph on $[n]$ which is obtained by adding an edge between two vertices of a path $P_{n}$. If the girth of $G$ is at least $4$, then $\reg(S/J_{G}^t)= 2t+n-4$ for all $t\ge 2$.
\end{Theorem}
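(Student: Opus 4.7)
The plan is to derive the formula from \Cref{cor:thm_3_general} applied to $U = J_G$. By \Cref{lem6}, $J_G$ is an almost complete intersection; since $G$ is connected on $n$ vertices with exactly $n$ edges (the $n-1$ edges of $P_n$ plus the added one), $J_G$ is equigenerated in degree $\delta = 2$, with $\mu(J_G) = n$ and $\Ht(J_G) = n - 1$. Hence \Cref{cor:thm_3_general}(a), applied with $(n,\delta) = (n,2)$, produces the desired equality $\reg(S/J_G^t) = n + 2t - 4$ for every $t \ge 2$, provided we establish the single-degree bound
\[
    \reg(S/J_G) \le n - 2.
\]

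I would verify this bound along the three sub-cases spelled out in the paragraph preceding the theorem. If $G = C_n$ (both $i_0, j_0$ endpoints of $P_n$), the bound follows from \cite[Theorem~3.6]{JKS}. If $G$ is a balloon (the $G_1$-type, exactly one of $i_0, j_0$ an endpoint), the bound is given by \cite[Remark~3.15]{JKS}. The genuinely new case is the $G_2$-type, where $G$ is a cycle $C_g$ of length $g = j_0 - i_0 + 1 \ge 4$ with two paths of lengths $p_1 = i_0 - 1 \ge 1$ and $p_2 = n - j_0 \ge 1$ attached at the adjacent cycle vertices $i_0, j_0$.

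For this new case my plan is to iterate \Cref{lem:reduction_by_path} at cut vertices strictly interior to each attached path; any such cut vertex is automatically a free vertex on the truncated side, so the lemma applies. Successive strippings reduce $G$ to the \emph{core} graph $H$ consisting of $C_g$ with exactly one pendant attached at each of $i_0, j_0$ (so $|V(H)| = g + 2$), producing
\[
    \reg(S/J_G) = \reg(S/J_H) + (p_1 - 1) + (p_2 - 1).
\]
Since $(p_1 - 1) + (p_2 - 1) = (n - g) - 2$, the target inequality $\reg(S/J_G) \le n - 2$ is equivalent to the base-case bound $\reg(S/J_H) \le g = |V(H)| - 2$. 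To obtain that, I would use the short exact sequence
\[
    0 \to \frac{S}{J_{H \setminus e}:f_e}(-2) \xrightarrow{\cdot f_e} \frac{S}{J_{H \setminus e}} \to \frac{S}{J_H} \to 0
\]
coming from the added cycle edge $e = \{i_0, j_0\}$. Here $H \setminus e$ is a tree (an ``H''-shaped caterpillar), so $J_{H \setminus e}$ is a complete intersection of regularity $g$; a combinatorial description of the colon ideal $J_{H \setminus e}:f_e$ via Ohtani-type closure of the neighbourhoods of $i_0, j_0$ should furnish an adequate bound on its quotient's regularity, after which \Cref{lem5} delivers $\reg(S/J_H) \le g$.

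The main obstacle is precisely the base case $H$: the stripping via \Cref{lem:reduction_by_path} cannot be continued past $H$, since $i_0$ and $j_0$ are internal (not free) vertices of $H$. A separate short-exact-sequence / colon-ideal analysis is therefore required to pin down $\reg(S/J_H)$. A minor subtlety will be handling the boundary situations $p_1 = 1$ or $p_2 = 1$ (where the corresponding stripping step is vacuous so $G$ itself already coincides with $H$ up to one pendant), but these degenerate cases can be absorbed directly into the argument.
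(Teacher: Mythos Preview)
Your overall strategy---apply \Cref{cor:thm_3_general}(a) with $\delta=2$ and $\Ht(J_G)=n-1$, after verifying $\reg(S/J_G)\le n-2$---is exactly what the paper does, and your reduction of the $G_2$-type case to the core graph $H$ (i.e.\ to $G_2$ itself on $g+2$ vertices) via \Cref{lem:reduction_by_path} also matches the paper's argument.

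The gap is in your handling of the base case $H$. With $e=\{i_0,j_0\}$ the added edge, $H\setminus e$ is \emph{not} an ``H''-shaped caterpillar: since $i_0$ and $j_0$ are \emph{adjacent} on the cycle, removing $e$ opens the cycle into a path with endpoints $i_0,j_0$, and the two pendants simply extend that path. Thus $H\setminus e\cong P_{g+2}$ and $\reg(S/J_{H\setminus e})=g+1$, not $g$. Moreover, the colon ideal is $J_{(H\setminus e)_e}$, where $(H\setminus e)_e$ is the block graph consisting of two triangles joined by a path; one computes $\reg(S/J_{(H\setminus e)_e})=g-1$. In your short exact sequence the two outer terms then both have regularity $g+1$, so \Cref{lem5}\ref{lem5-3} yields only $\reg(S/J_H)\le g+1$, which falls one short of the bound $\le g$ you need.

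The paper sidesteps this entirely by citing \cite[Proposition~3.13]{JKS}, which gives $\reg(S/J_{G_2})=m-3$ directly (in your notation, $\reg(S/J_H)=g-1$). With that single citation replacing your proposed short exact sequence analysis, the argument is complete and coincides with the paper's proof.
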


\begin{proof}
    As mentioned earlier, it suffices to consider the case when $G$ is of $G_2$-type. We claim that in this case $\reg(S/J_G)=n-3$. Whence, we can apply \Cref{cor:thm_3_general} to achieve the desired result.

    To confirm the claim, by \Cref{lem:reduction_by_path}, it suffices to assume that
    $G=G_2$ with $n=m$. But then, the claimed formula has been proved by \cite[Proposition 3.13]{JKS}, which completes the proof.
    \qedhere

\end{proof}

Interestingly, Bolognini et al.~in \cite{BMS} studied a family of bipartite graphs, denoted by $F_m$, whose binomial edge ideals are Cohen-Macaulay. Let $m$ be a positive integer. Then, $F_m$ is the graph on the vertex set $[2m]$ with the edge set $E(F_m)=\{\{2i,2j-1\}: 1\le i\le j\le m\}$. The graphs $F_3$ and $F_4$ are displayed in \Cref{Fig:1}.

\begin{figure}[htbp]%
    \centering
    \parbox[b]{.3\textwidth}{\centering
        \begin{tikzpicture}[thick, scale=1.2, every node/.style={scale=0.8}]]
            \shade [shading=ball, ball color=black]  (1,1) circle (.07) node [below] {\scriptsize$1$};
            \shade [shading=ball, ball color=black]  (2,1) circle (.07) node [below] {\scriptsize$3$};
            \shade [shading=ball, ball color=black]  (3,1) circle (.07) node [below] {\scriptsize$5$};
            \shade [shading=ball, ball color=black]  (1,3) circle (.07) node [above] {\scriptsize$2$};
            \shade [shading=ball, ball color=black]  (2,3) circle (.07) node [above] {\scriptsize$4$};
            \shade [shading=ball, ball color=black]  (3,3) circle (.07) node [above] {\scriptsize$6$};
            \draw[thick] (1,1) -- (1,3);
            \draw[thick] (1,1) -- (2,3);
            \draw[thick] (1,1) -- (3,3);
            \draw[thick] (2,1) -- (2,3);
            \draw[thick] (2,1) -- (3,3);
            \draw[thick] (3,1) -- (3,3);
        \end{tikzpicture}
        \subcaption*{The graph $F_3$}
    }\qquad
    \parbox[b]{.3\textwidth}{\centering
        \begin{tikzpicture} [thick, scale=1.0, every node/.style={scale=0.98}]]
            \shade [shading=ball, ball color=black]  (2,1) circle (.07) node [below] {\scriptsize$4$};
            \shade [shading=ball, ball color=black]  (3.5,1) circle (.07) node [below] {\scriptsize$3$};
            \shade [shading=ball, ball color=black]  (2,2.5) circle (.07) node [above] {\scriptsize$1$};
            \shade [shading=ball, ball color=black]  (3.5,2.5) circle (.07) node [above] {\scriptsize$6$};
            \shade [shading=ball, ball color=black]  (1,3.2) circle (.07) node [above] {\scriptsize$2$};
            \shade [shading=ball, ball color=black]  (4.5,3.2) circle (.07) node [above] {\scriptsize$5$};
            \draw[thick] (2,1) -- (3.5,1);
            \draw[thick] (2,2.5) -- (3.5,2.5);
            \draw[thick] (2,1) -- (2,2.5);
            \draw[thick] (1,3.2) -- (2,2.5);
            \draw[thick] (3.5,1) -- (3.5,2.5);
            \draw[thick] (4.5,3.2) -- (3.5,2.5);
        \end{tikzpicture}
        \subcaption*{The graph $F_3$ is of $G_2$-type}
    }\qquad
    \parbox[b]{.3\textwidth}{\centering
        \begin{tikzpicture} [thick, scale=1.2, every node/.style={scale=0.8}]]
            \shade [shading=ball, ball color=black]  (1,1) circle (.07) node [below] {\scriptsize$1$};
            \shade [shading=ball, ball color=black]  (2,1) circle (.07) node [below] {\scriptsize$3$};
            \shade [shading=ball, ball color=black]  (3,1) circle (.07) node [below] {\scriptsize$5$};
            \shade [shading=ball, ball color=black]  (4,1) circle (.07) node [below] {\scriptsize$7$};
            \shade [shading=ball, ball color=black]  (1,3) circle (.07) node [above] {\scriptsize$2$};
            \shade [shading=ball, ball color=black]  (2,3) circle (.07) node [above] {\scriptsize$4$};
            \shade [shading=ball, ball color=black]  (3,3) circle (.07) node [above] {\scriptsize$6$};
            \shade [shading=ball, ball color=black]  (4,3) circle (.07) node [above] {\scriptsize$8$};
            \draw[thick] (1,1) -- (1,3);
            \draw[thick] (1,1) -- (2,3);
            \draw[thick] (1,1) -- (3,3);
            \draw[thick] (1,1) -- (4,3);
            \draw[thick] (2,1) -- (2,3);
            \draw[thick] (2,1) -- (3,3);
            \draw[thick] (2,1) -- (4,3);
            \draw[thick] (3,1) -- (3,3);
            \draw[thick] (3,1) -- (4,3);
            \draw[thick] (4,1) -- (4,3);
        \end{tikzpicture}
        \subcaption*{The graph $F_4$}
    }
    \caption{\ }
    \label{Fig:1}
\end{figure}

\begin{Corollary}
    Let $m\ge 3$ be a positive integer, then $\reg(S/J_{F_m}^t)\ge 2t+2$ for all $t\ge 2$.
\end{Corollary}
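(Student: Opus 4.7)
The plan is to reduce to the case $m=3$, already handled by \Cref{thm:F3-reg}, by exhibiting $F_3$ as an induced subgraph of $F_m$ for every $m\ge 3$ and then invoking the monotonicity of the regularity of powers of binomial edge ideals under induced subgraphs.

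First, I would verify that restricting $F_m$ to the vertex set $\{1,2,3,4,5,6\}\subset[2m]$ recovers exactly $F_3$: from $E(F_m)=\Set{\{2i,2j-1\}\mid 1\le i\le j\le m}$, the edges with both endpoints in $\{1,\ldots,6\}$ are precisely those with $i,j\in\{1,2,3\}$, which is $E(F_3)$. As the middle panel of \Cref{Fig:1} makes explicit, $F_3$ is obtained from the Hamiltonian path $1\text{-}2\text{-}3\text{-}4\text{-}5\text{-}6$ by adjoining the chord $\{2,5\}$ between two internal vertices; hence $F_3$ is of $G_2$-type on $n=6$ vertices and has girth $4$. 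Consequently \Cref{thm:F3-reg} yields $\reg(S_{F_3}/J_{F_3}^t)=2t+6-4=2t+2$ for all $t\ge 2$.

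It remains to establish the induced-subgraph monotonicity $\reg(S_{F_3}/J_{F_3}^t)\le \reg(S_{F_m}/J_{F_m}^t)$, which combined with the computation above gives the claim. This is the main obstacle, and I would argue as follows. Consider the graded retraction $\pi\colon S_{F_m}\to S_{F_3}$ that fixes $x_v,y_v$ for $v\in\{1,\ldots,6\}$ and sends $x_v,y_v\mapsto 0$ for $v\in\{7,\ldots,2m\}$. Since $F_3$ is an induced subgraph, each generator $f_{i,j}=x_iy_j-x_jy_i$ of $J_{F_m}$ either lies in $J_{F_3}$ (when both $i,j\in\{1,\ldots,6\}$) or is annihilated by $\pi$ (when at least one of $i,j$ is outside). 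Hence $\pi(J_{F_m})=J_{F_3}$ and $\pi(J_{F_m}^t)=J_{F_3}^t$, so
\[
\frac{S_{F_m}}{J_{F_m}^t+\ker\pi}\;\cong\;\frac{S_{F_3}}{J_{F_3}^t}.
\]
The ideal $\ker\pi$ is generated by $4(m-3)$ linear forms. Appealing to the standard fact that $\reg(M/\ell M)\le \reg(M)$ for any finitely generated graded module $M$ and any linear form $\ell$, and iterating this inequality across the generators of $\ker\pi$, one obtains
\[
\reg\bigl(S_{F_3}/J_{F_3}^t\bigr)=\reg\!\left(\frac{S_{F_m}}{J_{F_m}^t+\ker\pi}\right)\le \reg\bigl(S_{F_m}/J_{F_m}^t\bigr),
\]
which is the required monotonicity and completes the proof.
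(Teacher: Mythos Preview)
Your overall strategy is exactly the paper's: exhibit $F_3$ as an induced subgraph of $F_m$, invoke monotonicity of $\reg(S/J_G^t)$ under passage to induced subgraphs, and then read off $\reg(S_{F_3}/J_{F_3}^t)=2t+2$ from \Cref{thm:F3-reg}. The identification of $F_3$ as a $G_2$-type graph on six vertices with girth $4$ is fine, and the computation for $F_3$ is correct.

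The gap is in your justification of the monotonicity. You reduce it to ``the standard fact that $\reg(M/\ell M)\le\reg(M)$ for any finitely generated graded module $M$ and any linear form $\ell$,'' iterated over the $4(m-3)$ variables you are killing. This is \emph{not} a standard fact: the inequality is only known under an additional hypothesis on $\ell$, typically that $\ell$ be filter-regular on $M$ (i.e., that $(0:_M\ell)$ have finite length). What the short exact sequences actually give is
\[
\reg(M/\ell M)\le\max\bigl(\reg(M),\,\reg(0:_M\ell)-1\bigr),
\]
and without control over $\reg(0:_M\ell)$ the right-hand side need not be $\reg(M)$. Here the variables $x_v,y_v$ for $v\ge 7$ are zero-divisors on $S_{F_m}/J_{F_m}^t$ (the associated primes of $J_{F_m}$ contain such variables), so filter-regularity is genuinely at issue and you cannot simply iterate the inequality.

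The paper sidesteps this entirely by citing \cite[Proposition~3.3]{JKS}, which proves directly that if $H$ is an induced subgraph of $G$ then $\reg(S_H/J_H^t)\le\reg(S_G/J_G^t)$ for all $t\ge 1$; the argument there goes through the graded algebra retract $S_H/J_H^t\hookrightarrow S_G/J_G^t\twoheadrightarrow S_H/J_H^t$ and a comparison of graded Betti numbers, not through successive hyperplane sections. You should replace your ``standard fact'' paragraph with a reference to that proposition.
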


\begin{proof}
    Note that $F_3$ is an induced subgraph of $F_m$. Thus, by \cite[Proposition 3.3]{JKS}, we have $\reg(S/J_{F_m}^t)\ge \reg(S/J_{F_3}^t)$ for all $t\ge 2$. Meanwhile, $F_3$ is the $G_2$ with $6$ vertices. Hence, the assertion immediately follows from the theorem above.
\end{proof}

\subsection{$C_3$-type cases}

In this subsection, let $G$ be a graph which is obtained by attaching a path to each vertex of a $3$-cycle $C_3$ as in \Cref{lem6}; see for instance the graph displayed in \Cref{Fig:4}. Whence, we will call it of \emph{$C_3$-type}. Here, we allow the degenerated case when some of the paths attached to be empty.

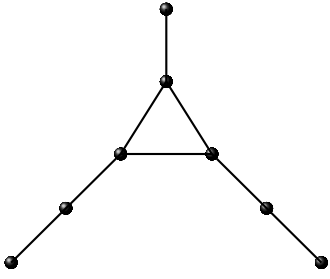
\begin{figure}[htbp]%
    \centering
    \begin{tikzpicture}[thick, scale=1.2, every node/.style={scale=0.8}]]
        \shade [shading=ball, ball color=black]  (1.8,1.8) circle (.07);
        \shade [shading=ball, ball color=black]  (2.4,2.4) circle (.07);
        \shade [shading=ball, ball color=black]  (3,3) circle (.07);
        \shade [shading=ball, ball color=black]  (3.5,3.8) circle (.07);
        \shade [shading=ball, ball color=black]  (3.5,4.6) circle (.07);
        \shade [shading=ball, ball color=black]  (3.5,3.8) circle (.07);
        \shade [shading=ball, ball color=black]  (4,3) circle (.07);
        \shade [shading=ball, ball color=black]  (4.6,2.4) circle (.07);
        \shade [shading=ball, ball color=black]  (5.2,1.8) circle (.07);
        \draw[thick] (1.8,1.8) -- (2.4,2.4) -- (3,3) -- (3.5,3.8) -- (3.5,4.6);
        \draw[thick] (3.5,3.8) -- (4,3) -- (4.6,2.4) -- (5.2,1.8);
        \draw[thick] (3,3) -- (4,3);
    \end{tikzpicture}
    \caption{$C_3$-type graph}
    \label{Fig:4}
\end{figure}


\begin{Theorem}
    \label{thm:C3}
    Let $G$ be a $C_3$-type graph on $[n]$. Then, we have $\reg(S/J_G^t)=2t+n-4$ for $t\ge 1$.
\end{Theorem}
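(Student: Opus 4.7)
The plan is to verify the hypothesis of \Cref{cor:thm_3_general}(a) for $U = J_G$. For a $C_3$-type graph $G$ on $[n]$, the ideal $J_G$ is generated by $|E(G)| = 3 + n_1 + n_2 + n_3 = n$ binomials of degree $\delta = 2$, has $\Ht(J_G) = n - 1$, and is an almost complete intersection by \Cref{lem6}(i). Thus, once I establish the exact value $\reg(S/J_G) = n - 2$, the corollary immediately yields
\[
\reg\!\left(\tfrac{S}{J_G^t}\right) = n\delta - n + \delta t - 2\delta = 2t + n - 4
\]
for $t \ge 2$, while the equality itself already covers the $t = 1$ case.

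To compute $\reg(S/J_G)$, I iterate \Cref{lem:reduction_by_path}. Label the vertices of the central $3$-cycle by $v_1, v_2, v_3$ and the path attached at $v_i$ by $v_i - w_1^{(i)} - \cdots - w_{n_i}^{(i)}$ with $n_i \ge 0$. For each $i$ with $n_i \ge 2$, splitting $G$ at the cut vertex $w_1^{(i)}$ produces on one side a $C_3$-type graph $G'$ in which the $i$-th attached path is shortened to a single pendant edge, and on the other the path $P_{n_i}$; since $w_1^{(i)}$ is a pendant of $G'$, it is a free vertex there, and \Cref{lem:reduction_by_path} delivers $\reg(S/J_G) = \reg(S/J_{G'}) + (n_i - 1)$. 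Iterating this reduction brings the problem down to the graph $H_k$, namely $C_3$ with pendant edges attached at exactly those $v_i$ for which $n_i \ge 1$, where $k := |\{i : n_i \ge 1\}|$; the accumulated contribution to the regularity is $\sum_{i : n_i \ge 2}(n_i - 1) = (n - 3) - k$.

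For $H_k$ with $k \ge 1$, a further split at a vertex $v_i$ carrying a pendant produces the path $P_2$ on one side and $H_{k-1}$ on the other; in $H_{k-1}$ the vertex $v_i$ belongs only to the maximal clique $C_3$, so it is free, and \Cref{lem:reduction_by_path} yields $\reg(S/J_{H_k}) = \reg(S/J_{H_{k-1}}) + 1$. Iterating down to $H_0 = C_3$ gives $\reg(S/J_{H_k}) = \reg(S/J_{C_3}) + k = 1 + k$, where $\reg(S/J_{C_3}) = 1$ can be read off from the Eagon--Northcott resolution of the $2 \times 2$ minors of a generic $2 \times 3$ matrix. Assembling, $\reg(S/J_G) = (k + 1) + (n - 3 - k) = n - 2$, as required. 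The main thing to watch is the free-vertex hypothesis of \Cref{lem:reduction_by_path} at every stage, but this is settled in each case by the observation that the split vertex is either a pendant of the residual graph or lies on its unique triangle $C_3$, so sits in exactly one maximal clique.
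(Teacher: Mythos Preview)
Your proof is correct and follows essentially the same approach as the paper's: establish $\reg(S/J_G)=n-2$ by repeatedly applying \Cref{lem:reduction_by_path} to strip off the attached paths down to $C_3$, and then invoke part (a) of \Cref{cor:thm_3_general}. The paper is terser (it splits directly at each triangle vertex $v_i$, removing its whole attached path in one step, and cites \cite[Theorem 3.6]{JKS} for $\reg(S/J_{C_3})=1$), while you take a two-stage reduction through the graphs $H_k$ and appeal to the Eagon--Northcott complex; these are cosmetic differences.
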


\begin{proof}
    We claim that $\reg(S/J_G)=n-2$. With this, since $\Ht(J_G)=n-1$, it then suffices to apply the first case of \Cref{cor:HT_general}.

    As for the claim, by \Cref{lem:reduction_by_path}, it suffices to assume that $n=3$ and $G=C_3$. But $\reg(S/J_{C_3})=1$ is already known by \cite[Theorem 3.6]{JKS}, which completes the proof. 
\end{proof}

\subsection{Tree cases}

In this subsection, let the graph $G$ be obtained by adding an edge between two vertices of two paths as in \Cref{lem6}. If $G$ is obtained by adding an edge between an internal vertex of a path and an end vertex of another path, then we say that $G$ is of \emph{$T$-type}. And if $G$ is obtained by adding an edge between two internal vertices of two distinct paths, then we say that $G$ is of \emph{$H$-type}. Typical examples of $T$-type graph and $H$-type graphs are displayed in \Cref{Fig:2}.

\begin{figure}[htbp]%
    \centering
    \parbox[b]{.25\textwidth}{\centering
        \begin{tikzpicture}[thick, scale=1.5, every node/.style={scale=0.8}]]
            \shade [shading=ball, ball color=black]  (1,1) circle (.06);
            \shade [shading=ball, ball color=black]  (2,1) circle (.06);
            \shade [shading=ball, ball color=black]  (3,1) circle (.06);
            \shade [shading=ball, ball color=black]  (4,1) circle (.06);
            \shade [shading=ball, ball color=black]  (2,0) circle (.06);
            \shade [shading=ball, ball color=black]  (2,-1) circle(.06);
            \draw[thick] (1,1) -- (2,1) -- (3,1) -- (4,1);
            \draw[thick] (2,1) -- (2,0) -- (2,-1);
        \end{tikzpicture}
        \subcaption*{$T$-type graph}
    } \qquad \qquad
    \parbox[b]{.25\textwidth}{\centering
        \begin{tikzpicture}[thick, scale=1.2, every node/.style={scale=0.8}]]
            \shade [shading=ball, ball color=black]  (1,1) circle (.07);
            \shade [shading=ball, ball color=black]  (2.5,1) circle (.07);
            \shade [shading=ball, ball color=black]  (1,2) circle (.07);
            \shade [shading=ball, ball color=black]  (1,3) circle (.07);
            \shade [shading=ball, ball color=black]  (1,4) circle (.07);
            \shade [shading=ball, ball color=black]  (2.5,2) circle (.07);
            \shade [shading=ball, ball color=black]  (2.5,3) circle (.07);
            \shade [shading=ball, ball color=black]  (2.5,4) circle (.07);
            \draw[thick] (1,1) -- (1,2) -- (1,3) -- (1,4);
            \draw[thick] (2.5,1) -- (2.5,2) -- (2.5,3) -- (2.5,4);
            \draw[thick] (1,3) -- (2.5,2);
        \end{tikzpicture}
        \subcaption*{$H$-type graph}
    }
    \caption{\ }
    \label{Fig:2}
\end{figure}

Recall that $\iv(G)$ is the number of internal vertices of $G$. It is clear that if $G$ is a $T$-type tree, then $\iv(G)=n-3$. And if $G$ is an $H$-type tree, then $\iv(G)=n-4$.

\begin{Theorem}
    \label{conj:2}
    Let $G$ be a tree on $[n]$ such that $J_G$ is an almost complete intersection ideal. Then, we have $\reg(S/J_G^t)=2t+\iv(G)-1$ for $t\ge 2$.
\end{Theorem}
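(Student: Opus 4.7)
The plan is to apply \Cref{thm:HT} with $\delta = 2$, after a case split via \Cref{lem6}(ii) and a computation of the base regularity $\reg(S/J_G)$ and the auxiliary colon-regularity. By \Cref{lem6}(ii), the non-path tree $G$ is either of $T$-type (so $\iv(G) = n-3$) or of $H$-type (so $\iv(G) = n-4$). Let $m \coloneqq n - 1$ denote the number of edges of $G$, which equals the number of minimal quadratic generators of $J_G$. By \cite[Proposition 4.10]{JKS2}, I would fix a homogeneous $d$-sequence $f_1, \dots, f_m$ of $J_G$ such that $f_1, \dots, f_{m-1}$ is a regular sequence, choosing $f_m$ to correspond to an edge $e_m$ incident to a vertex of degree $\geq 3$ of $G$. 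With this choice, $G \setminus e_m$ is a disjoint union of paths, so $(f_1, \dots, f_{m-1}) = J_{G \setminus e_m}$ is a complete intersection.

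First I would establish $\reg(S/J_G) = \iv(G) + 1$, which evaluates to $n - 2$ for $T$-type and $n - 3$ for $H$-type. This follows by induction on $n$ via iterated applications of \Cref{lem:reduction_by_path}: strip off a pendant path attached at a leaf of the residual graph until arriving at a base case such as $K_{1,3}$ for $T$-type or a minimal six-vertex $H$-shape, whose regularity can be verified directly from its minimal free resolution.

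For the $H$-type case, one has $\reg(S/J_G) = n - 3 = m - 2 = m\delta - m - \delta$, so \Cref{cor:thm_3_general}(a) applies directly and yields $\reg(S/J_G^t) = m + 2t - 4 = 2t + n - 5 = 2t + \iv(G) - 1$ for $t \ge 2$. For the $T$-type case, $\reg(S/J_G) = n - 2 = m - 1 = (m-1)(\delta-1)$ is precisely the excluded value in \Cref{cor:HT} and falls in the gap of \Cref{cor:thm_3_general}; here I would invoke \Cref{thm:HT} directly. The crucial claim is $B \coloneqq \reg(S/((f_1, \dots, f_{m-1}):f_m)) = m - 2 = n - 3$, from which the hypothesis $B \ge \max\{\reg(S/J_G) - \delta + 1, m\delta - m - 3\delta + 2\} = m - 2$ follows automatically, and \Cref{thm:HT} yields $\reg(S/J_G^t) = B + \delta t - 1 = 2t + n - 4 = 2t + \iv(G) - 1$, as desired.

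The main obstacle is pinning down $B$ in the $T$-type case. The key observation is that although $(f_1, \dots, f_{m-1}) = J_{G \setminus e_m}$ is a complete intersection, it is not prime: its minimal primary decomposition includes the cut-vertex prime $(x_v, y_v)$, where $v$ is the degree-$3$ vertex incident to $e_m$, which contains $f_m$, alongside other minimal primes not containing $f_m$. Since $J_{G \setminus e_m}$ is radical, $(f_1, \dots, f_{m-1}) : f_m$ equals the intersection of the minimal primes not containing $f_m$. In the smallest case $G = K_{1,3}$, this intersection reduces to $J_{K_3}$, whose regularity is $1 = m - 2$ by the Eagon--Northcott resolution. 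For larger $T$-type trees, I would carry out an analogous primary-decomposition analysis, combined with a \Cref{lem:reduction_by_path}-style induction on the colon ideal, to establish $B = m - 2 = n - 3$ and complete the proof.
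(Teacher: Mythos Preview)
Your overall architecture matches the paper's: split into $T$-type and $H$-type via \Cref{lem6}, handle the $H$-type case with \Cref{cor:thm_3_general}(a) (this part is essentially identical), and handle the $T$-type case with \Cref{thm:HT}. For $\reg(S/J_G)=\iv(G)+1$ the paper simply cites \cite[Theorems 4.1 and 4.2]{JNR}; your pendant-stripping induction via \Cref{lem:reduction_by_path} would also work, but is unnecessary extra labor.

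The genuine gap is your computation of $B$ in the $T$-type case. First, a factual slip: $(x_v,y_v)$ has height $2$, so it is \emph{not} a minimal prime of $J_{G\setminus e_m}$, which is a complete intersection of height $n-2$; the actual minimal primes are the $P_T$ of Herzog--Hibi--Hreinsd\'ottir--Kahle--Rauh, and there are many of them once the arms have length $\ge 2$. Second, and more seriously, your plan to take the intersection of those $P_T$ not containing $f_{e_m}$ and then run ``a \Cref{lem:reduction_by_path}-style induction on the colon ideal'' presupposes that this intersection is again a binomial edge ideal of a graph to which \Cref{lem:reduction_by_path} applies. That is exactly the nontrivial content you are skipping. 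Without it, neither the primary-decomposition bookkeeping nor the induction step is executable beyond the toy case $K_{1,3}$.

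The paper closes this gap in one stroke by invoking \cite[Theorem 3.7]{MR3169597}, which gives the identity
\[
J_{G\setminus e}:f_e \;=\; J_{(G\setminus e)_e}.
\]
With $e$ the bridge edge, $(G\setminus e)_e$ is the disjoint union of the path $P_{n_2}$ and a $C_3$-type graph on the vertex set of $P_{n_1}$ (the $(\cdot)_e$-operation completes the neighbourhood of the degree-$3$ vertex into a triangle). Then \cite[Proposition 3.11]{MR3912960} together with \Cref{thm:C3} yields
\[
B=\reg\!\left(\frac{S}{J_{(G\setminus e)_e}}\right)=(n_2-1)+(n_1-2)=n-3,
\]
and \Cref{thm:HT} finishes the $T$-type case. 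If you want to keep your primary-decomposition viewpoint, the missing lemma you need is precisely this Mohammadi--Sharifan identity.
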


\begin{proof}
    From \Cref{lem6}, we know that $G$ can be obtained by adding an edge between two disjoint paths, say $P_{n_1}$ and $P_{n_2}$, where $n_1+n_2=n$. Let $e$ be the edge between $P_{n_1}$ and $P_{n_2}$, then $G\setminus e=P_{n_1}\sqcup P_{n_2}$.
    Furthermore, from \cite[Theorems 4.1 and 4.2]{JNR}, we obtain that $\reg(S/J_G)=\iv(G)+1$.

    Now, we consider the following two cases by noticing that $\Ht(J_G)=n-2$.
    \begin{enumerate}[i]
        \item Suppose that $G$ is a $T$-type tree.
            Then $\reg(S/J_G)=n-2$. We claim that $\reg(S/(J_{G\setminus e}:f_e))=n-3$. With this,
            it follows from \Cref{thm:HT} that $\reg(S/J_G^t)=(n-3)+2t-1=2t+\iv(G)-1$ for $t\ge 2$.

            As for the regularity of $S/(J_{G\setminus e}:f_e)$, since $J_{G\setminus e}:f_e=J_{(G\setminus e)_e}$ from \cite[Theorem 3.7]{MR3169597}, it is enough to compute $\reg(S/J_{(G\setminus e)_e})$. Obviously, $(G\setminus e)_e$ is the disjoint union of a path (which we may assume to be $P_{n_2}$) and a $C_3$-type graph (which we may denote by $G'$, with the same vertex set as $P_{n_1}$). And this special $C_3$-type graph $G'$ is obtained by attaching paths to at most two of the three vertices of $C_3$. It follows from from \cite[Proposition 3.11]{MR3912960} and \Cref{thm:C3} that
            \[
                \reg(S/J_{(G\setminus e)_e})=\reg(S/P_{n_2})+\reg(S/J_{G'})=(n_2-1)+(2+n_1-4)= n-3,
            \]
            confirming the claim.
        \item If $G$ is an $H$-type tree, then $\reg(S/J_G)=n-3$. Thus, we can apply the first case of \Cref{cor:HT_general} with $\delta=2$ to obtain that $\reg(S/J_G^t)=(n-1)+2t-4=2t+\iv(G)-1$ for $t\ge 2$. \qedhere
    \end{enumerate}	
\end{proof}

\section{Graphs with almost complete intersection parity binomial edge ideals}

Within this section, we will study the regularity of powers of the parity binomial edge ideal $\calI_G$   of a graph $G$ where $\calI_G$ is an almost complete intersection. Without loss of generality, suppose that the vertex set of $G$ is $[n]$. Let $S=S_G=\kappa[x_i,y_i\mid 1\le i\le n]$ be a standard graded polynomial ring in $2n$ variables over a field $\kappa$. And for simplicity, if $e=\{i,j\}$ is an edge of a graph $G$, we write $\bar{g}_e=\bar{g}_{i,j}\coloneqq x_ix_j-y_iy_j$. Consequently, the parity binomial edge ideal $\calI_G=(\bar{g}_e\mid e\in E(G))$ in $S$. Furthermore, we will assume that $\kappa$ is an infinite field with $\Char(\kappa)\ne 2$.

Graphs whose parity binomial edge ideals are (almost) complete intersections, are classified in \cite[Section 3]{MR4186617}.

\begin{Lemma}
    [{\cite[Corollary 3.6]{MR4186617}}]
    \label{complete}
    Let $G$ be a simple graph. Then $\calI_G$ is a complete intersection if and only if all the bipartite connected components of $G$ are paths and non-bipartite connected components are odd cycles.
\end{Lemma}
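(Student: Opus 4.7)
The plan is to reduce to the case of a connected graph, and then to handle the bipartite and non-bipartite cases by two rather different methods.

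First, if $G$ has connected components $G_1, \dots, G_k$, then $\calI_G = \sum_i \calI_{G_i}$, with the summands living in disjoint sets of variables of $S$. Consequently both $\mu$ and $\Ht$ are additive along this decomposition, so $\calI_G$ is a complete intersection if and only if each $\calI_{G_i}$ is. I may therefore assume $G$ is connected.

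For connected bipartite $G$ with bipartition $V(G) = V_1 \sqcup V_2$, I would use the automorphism $\sigma$ of $S$ that exchanges $x_i \leftrightarrow y_i$ for every $i \in V_2$ and fixes the variables indexed by $V_1$. For an edge $\{i,j\} \in E(G)$ with $i \in V_1$ and $j \in V_2$,
\[
    \sigma(x_i x_j - y_i y_j) = x_i y_j - y_i x_j,
\]
which is, up to sign, the binomial edge ideal generator $f_{i,j}$. Hence $\sigma(\calI_G) = J_G$, and so $\calI_G$ is a complete intersection if and only if $J_G$ is. By the classical result (\cite{EHH}), the latter happens for connected $G$ if and only if $G$ is a path, giving the desired conclusion in the bipartite case.

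For connected non-bipartite $G$ on $n$ vertices with $m$ edges, the automorphism trick no longer applies, and I would compare $\mu(\calI_G) = m$ directly with $\Ht(\calI_G)$. Parametrizing a generic point of $V(\calI_G)$ by $y_i = \zeta_i x_i$, the edge relations become $\zeta_i \zeta_j = 1$ for every $\{i,j\} \in E(G)$; connectedness determines every $\zeta_i$ from $\zeta_1$, and the presence of an odd cycle forces $\zeta_1^2 = 1$. So there are only finitely many choices of the $\zeta_i$, the $x_i$'s remain free, and this component has dimension $n$, giving $\Ht(\calI_G) \le n$. For the odd-cycle case $G = C_n$ with $n$ odd, I would verify that $n$ is indeed the height, so $\mu = n = \Ht$ and $\calI_{C_n}$ is a complete intersection. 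Conversely, assume $\calI_G$ is a complete intersection. Then $m = \Ht(\calI_G) \le n$, and since $G$ is connected and not a tree, $m = n$; thus $G$ is unicyclic with an odd cycle. To rule out pendant vertices, I would show that if $v$ is pendant then setting $x_v = y_v = 0$ trivializes the unique relation incident to $v$ and produces an irreducible component of $V(\calI_G)$ whose dimension exceeds $n$ (inductively, coming from the generic analysis applied to $G \setminus v$), forcing $\Ht(\calI_G) < n$ and a contradiction. The remaining case $m > n$ is already excluded by $m \le \Ht(\calI_G) \le n$.

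The main obstacle I anticipate is making the height calculation rigorous: specifically, justifying that the generic parametrization really gives the minimum codimension, and that pendant vertices always produce a strictly larger component. For a clean treatment one likely wants to invoke the primary decomposition of parity binomial edge ideals due to Kahle--Sarmiento--Sullivant, which describes the minimal primes of $\calI_G$ in terms of parity data on the components of $G \setminus S$ for subsets $S \subseteq V(G)$, and lets one read off $\Ht(\calI_G)$ directly from the combinatorics of $G$.
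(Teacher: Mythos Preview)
The paper does not prove this lemma; it is quoted verbatim from \cite[Corollary 3.6]{MR4186617} and used as a black box. So there is no ``paper's own proof'' to compare against, and your proposal should be judged on its own merits.

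Your reduction to connected components and your treatment of the bipartite case are both correct; the automorphism you call $\sigma$ is exactly the map $\Phi$ recorded later in the paper as \Cref{Phi}, and the reduction to the known classification of complete-intersection binomial edge ideals is the right move.

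The non-bipartite argument, however, has a genuine gap in the step where you rule out pendant vertices. Setting $x_v=y_v=0$ for a pendant vertex $v$ cuts the ambient space down by two dimensions and leaves you with $\calI_{G\setminus v}$ in $2(n-1)$ variables, where $G\setminus v$ is again connected odd unicyclic on $n-1$ vertices with $n-1$ edges. Even granting by induction that $\calI_{G\setminus v}$ is not a complete intersection when $G\setminus v$ is not a cycle, this only yields $\Ht(\calI_{G\setminus v})\le n-2$, hence a subvariety of dimension $\ge n$ inside $V(\calI_G)$ --- not strictly greater than $n$. That is not enough to force $\Ht(\calI_G)<n$. And in the base case where $G\setminus v$ \emph{is} an odd cycle (a balloon with a single pendant edge), the induction gives you nothing at all.

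The fix is to kill the variables at a different vertex: choose a cut vertex $w$ of $G$ (any odd unicyclic graph that is not a cycle has one, namely the vertex on the cycle where a tree is attached). Setting $x_w=y_w=0$ removes $\deg_G(w)\ge 3$ generators at the cost of two dimensions, and since $G\setminus w$ splits into at least two components you can count more carefully to get a subvariety of dimension strictly larger than $n$. Alternatively, as you yourself note at the end, invoking the Kahle--Sarmiento--Sullivant primary decomposition gives the height directly and avoids this ad hoc analysis.
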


\begin{Lemma}
    [{\cite[Theorems 3.7-3.11]{MR4186617}}]
    \label{complex}
    One can classify connected graphs whose parity binomial edge ideals are almost complete intersections. They are a subclass of trees, a subclass of unicyclic graphs, or a subclass of bicyclic graphs. Below is the complete list.
    \begin{enumerate}[a]
        \item \label{complex_a} Let $G$ be a connected bipartite graph. Then $\calI_G$ is an almost complete intersection ideal if and only if $G$ is either obtained by adding an edge between two disjoint paths or by adding an edge between two vertices of a path such that the girth of $G$ is even.
        \item \label{complex_b} Let $G$ be a connected odd unicyclic graph. Then $\calI_G$ is an almost complete intersection ideal if and only if $G$ is one of the following types:
            \begin{enumerate}[i]
                \item $G$ is obtained by adding an edge $e$ between an odd cycle and a path, or
                \item $G$ is obtained by adding an edge $e$ between two vertices of a path such that girth of $G$ is odd and at least one of the vertex is an internal vertex of the path, or
                \item $G$ is obtained by attaching a path of length $\ge 1$ to each vertex of a triangle.
            \end{enumerate}
        \item \label{complex_c} Let $G$ be a connected non-bipartite bicyclic cactus graph. Then $\calI_G$ is almost complete intersection if and only if $G$ is obtained by adding an edge $e$ between two disjoint odd cycles.
        \item \label{complex_d} Let $G$ be a connected graph which is obtained by adding a chord $e=\{u ,v\}$ in an odd unicyclic graph $H$ such that $\deg_{H} (u)=\deg_{H}(v)=2$. Then $\calI_G$ is almost complete intersection if and only if $H$ is an odd cycle.
        \item \label{complex_e} Let $G$ be a non-bipartite graph which is obtained by adding a chord $e=\{u ,v\}$ in an even unicyclic graph $H$ such that $\deg_{H} (u)=\deg_{H}(v)=2$. Then $\calI_G$ is almost complete intersection if and only if $H$ is one of the following:
            \begin{enumerate}[i]
                \item $H$ is an even cycle, or
                \item $H$ is obtained by attaching a path to a vertex $i$ of an even cycle such that $\{u,i\},\{v,i\}$ are edges of the even cycle.
            \end{enumerate}
    \end{enumerate}
\end{Lemma}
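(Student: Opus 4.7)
The plan is to approach this as a case-by-case classification driven by the gross graph structure (bipartite/non-bipartite; tree/unicyclic/bicyclic), using the complete intersection criterion of \Cref{complete} as the baseline from which almost complete intersections are a one-edge perturbation. Two quantitative inputs are needed: first, that $\mu(\calI_G)=|E(G)|$, which follows because the generators $\bar g_e$ are distinct quadratic binomials and no one is a $\kappa$-combination of the others; second, a height formula for $\calI_G$. The height can be extracted from the primary decomposition developed for parity binomial edge ideals (by Kahle--Sarmiento--Windisch), where minimal primes are indexed by certain admissible bipartition-like data on subsets of $V(G)$; once this is in hand, the equation $\mu(\calI_G)=\Ht(\calI_G)+1$ becomes a combinatorial condition saying that $G$ is, roughly, ``one edge away'' from a disjoint union of paths and odd cycles.

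Next I would split into the bipartite and non-bipartite regimes. In the bipartite case, fix a bipartition $V(G)=V_1\sqcup V_2$ and apply the automorphism $y_i\mapsto -y_i$ for $i\in V_1$; this sends $\bar g_{ij}=x_ix_j-y_iy_j$ to a scalar multiple of the binomial edge ideal generator $f_{ij}=x_iy_j-x_jy_i$, so $\calI_G$ is isomorphic to $J_G$ as an $S$-ideal. The almost complete intersection classification for $J_G$ in the bipartite setting (\Cref{lem6}) then gives case \ref{complex_a} directly. In the non-bipartite regime, I would first handle the unicyclic case \ref{complex_b}, where the unique odd cycle plays the role that odd cycles play in \Cref{complete}: the ``extra'' edge must sit on or beside the cycle in a constrained way (adding an edge to an odd-cycle-plus-path, or producing an odd girth via two internal vertices of a path, or attaching three pendant paths to a triangle), and each sub-case is verified by checking $\mu=\Ht+1$ and that the localization at each minimal prime remains a complete intersection.

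The bicyclic cases \ref{complex_c}, \ref{complex_d}, \ref{complex_e} would be treated analogously but require recognizing which bicyclic configurations force $\mu(\calI_G)=\Ht(\calI_G)+1$. When the two cycles are vertex-disjoint and joined by a bridge (case \ref{complex_c}), a direct count using the odd/odd height contribution gives the balance; when a chord is added to a unicyclic graph $H$ so that $\deg_H(u)=\deg_H(v)=2$, one checks that pendant paths off the cycle always push the ideal beyond almost complete intersection, leaving only $H$ a pure cycle (case \ref{complex_d}) or an even cycle possibly with a single path attached at the apex vertex between $u$ and $v$ (case \ref{complex_e}). For necessity, I would argue by contradiction: any additional pendant path, extra cycle, or wrongly placed chord either strictly increases $|E(G)|-\Ht(\calI_G)$ above $1$, or violates the local complete intersection condition at some minimal prime.

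The main obstacle is the localization condition at minimal primes, since this is the subtle extra ingredient beyond the numerical equality $\mu=\Ht+1$. For binomial edge ideals the minimal primes have a clean description via Herzog et al., but for parity binomial edge ideals the combinatorics is genuinely richer (the admissible bipartitions interact nontrivially with odd cycles and bridges). Consequently, ruling out impostor graphs that satisfy the numerical equality but fail local complete intersection at some prime $\p$ is where the real combinatorial work sits, and this is what forces the rather fine-grained list in items \ref{complex_a}--\ref{complex_e}. I would expect to rely heavily on induction over the cycle/path decomposition and on explicit localized generators at each $\p$ to close out each sub-case.
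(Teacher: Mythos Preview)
The paper does not prove this lemma at all: it is stated with the attribution \cite[Theorems 3.7--3.11]{MR4186617} and used as a black box. There is therefore nothing in the present paper to compare your proposal against. Your sketch is a plausible outline of how the cited classification in Kumar's paper might be established, but verifying whether it matches the original argument would require consulting \cite{MR4186617} directly.

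One concrete slip in your sketch: the automorphism you propose in the bipartite case, $y_i\mapsto -y_i$ for $i\in V_1$, does \emph{not} carry $\bar g_{ij}$ to a scalar multiple of $f_{ij}$. For an edge $\{i,j\}$ with $i\in V_1$, $j\in V_2$, it sends $x_ix_j-y_iy_j$ to $x_ix_j+y_iy_j$, which is not $x_iy_j-x_jy_i$. The correct isomorphism is the one recorded in \Cref{Phi}: swap $x_i\leftrightarrow y_i$ for all $i$ in one part of the bipartition. With that fix, the reduction of case~\ref{complex_a} to \Cref{lem6} goes through as you intended.
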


\begin{Lemma}
    [{\cite[Corollary 3.12]{MR4186617}}]
    Let $G=G_1\sqcup \cdots\sqcup G_k$ be a disjoint union of $k$ graphs. Then $\calI_G$ is almost complete intersection if and only if for some $i$, $\calI_{G_{i}}$ is almost complete intersection and for $j\neq i$, $\calI_{G_{j}}$ are complete intersections.
\end{Lemma}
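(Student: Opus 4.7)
The plan is to derive both implications from the additive behaviour of $\mu$ and $\Ht$ under taking disjoint unions, and then to port the local complete-intersection condition across using disjointness of variables.

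Set $S_j=\kappa[x_v,y_v:v\in V(G_j)]$, so that $S=S_1\otimes_\kappa\cdots\otimes_\kappa S_k$ is a polynomial ring on a disjoint partition of variables and $\calI_G=\calI_{G_1}S+\cdots+\calI_{G_k}S$. Because the generators of the different $\calI_{G_j}$ involve disjoint variables, one has $\mu(\calI_G)=\sum_j\mu(\calI_{G_j})$. For heights, every minimal prime $\p$ of $\calI_G$ has the form $\p=\p_1+\cdots+\p_k$ with $\p_j$ a minimal prime of $\calI_{G_j}$ in $S_j$, and hence $\Ht(\calI_G)=\sum_j \Ht(\calI_{G_j})$. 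By Krull's principal ideal theorem, each summand $\mu(\calI_{G_j})-\Ht(\calI_{G_j})$ is a nonnegative integer, and these two identities give
\[
\mu(\calI_G)-\Ht(\calI_G)=\sum_{j=1}^k\bigl(\mu(\calI_{G_j})-\Ht(\calI_{G_j})\bigr).
\]

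For the backward direction, assume $\calI_{G_i}$ is an almost complete intersection and $\calI_{G_j}$ is a complete intersection for $j\neq i$. The displayed identity immediately gives $\mu(\calI_G)=\Ht(\calI_G)+1$. Let $\p$ be any minimal prime of $\calI_G$, written $\p=\p_1+\cdots+\p_k$. By assumption each $(\calI_{G_j})_{\p_j}$ is a complete intersection, hence generated in $(S_j)_{\p_j}$ by a regular sequence of length $\Ht(\calI_{G_j})$. Concatenating these regular sequences, which live in disjoint variable blocks, yields a regular sequence in $S_\p$ of length $\Ht(\calI_G)$ generating $(\calI_G)_\p$; thus $(\calI_G)_\p$ is a complete intersection, and $\calI_G$ is almost complete intersection.

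For the forward direction, suppose $\calI_G$ is almost complete intersection. The identity for $\mu-\Ht$ then forces exactly one index $i$ with $\mu(\calI_{G_i})-\Ht(\calI_{G_i})=1$ and $\mu(\calI_{G_j})=\Ht(\calI_{G_j})$ for every $j\neq i$; the latter says $\calI_{G_j}$ is a complete intersection for $j\neq i$. It remains to check the local condition for $\calI_{G_i}$. Fix a minimal prime $\q$ of $\calI_{G_i}$ in $S_i$ and, for each $j\neq i$, choose a minimal prime $\p_j$ of $\calI_{G_j}$ in $S_j$ (which exists and makes $(\calI_{G_j})_{\p_j}$ a complete intersection). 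Then $\p:=\sum_{j\neq i}\p_j+\q$ is a minimal prime of $\calI_G$, so $(\calI_G)_\p$ is a complete intersection, generated by $\Ht(\calI_G)$ elements that form a regular sequence.

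The main obstacle is the last step: extracting the local complete-intersection property of $\calI_{G_i}$ at $\q$ from that of $\calI_G$ at $\p$. The cleanest route is to use disjointness of variables: after localizing, the generators of $\calI_{G_j}$ for $j\neq i$ form a regular sequence in $S_\p$ consisting of elements in variables disjoint from those of $\calI_{G_i}$, so modding them out is flat on the $S_i$-factor and preserves regularity; the remaining generators, obtained from a minimal generating set of $(\calI_{G_i})_\q$, must therefore form a regular sequence of length $\Ht(\calI_{G_i})$. Equivalently, one invokes flat base change for the inclusion $(S_i)_\q\hookrightarrow S_\p$ (realized by tensoring with the localizations at the $\p_j$, $j\neq i$), which is faithfully flat and transports the complete-intersection property back to $(\calI_{G_i})_\q$. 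This completes the proof.
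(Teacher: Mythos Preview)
The paper does not give its own proof of this lemma: it is quoted verbatim from \cite[Corollary 3.12]{MR4186617} and used as a black box, so there is no ``paper's proof'' to compare against.

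Your argument is sound in outline. The additivity of $\mu$ and $\Ht$ under disjoint unions of variable sets, the Krull bound $\mu\ge \Ht$, and the resulting count $\mu(\calI_G)-\Ht(\calI_G)=\sum_j(\mu(\calI_{G_j})-\Ht(\calI_{G_j}))$ are all correct and dispatch the numerical part cleanly in both directions.

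The one place that deserves more care is the final step of the forward direction, which you yourself flag as the main obstacle. Neither sketch you give is quite self-contained. The ``regular sequence'' route implicitly assumes that the generators of $J_\p\coloneqq\sum_{j\ne i}(\calI_{G_j}S)_\p$ extend to a \emph{minimal} generating set of $(\calI_G)_\p$; equivalently, that $\mu((\calI_G)_\p)=\sum_j \mu((\calI_{G_j}S)_\p)$. This is true, but it is exactly the crux and needs an argument, not an assertion. The ``faithfully flat'' route needs a precise descent statement: what actually works is that $(S_i/\calI_{G_i})_\q \to (S/\calI_G)_\p$ is a flat local homomorphism whose closed fiber is a localization of $\kappa(\q)\otimes_\kappa \bigotimes_{j\ne i} S_j/\calI_{G_j}$, which is a complete intersection since each $\calI_{G_j}$ ($j\ne i$) is; then Avramov's descent theorem for the complete intersection property (flat local map with CI closed fiber) forces $(S_i/\calI_{G_i})_\q$ to be CI. Spelling this out would make your proof complete; as written, the last paragraph is a sketch rather than a proof.
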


Thus, all graphs whose parity binomial edge ideals are almost complete intersections are completely determined. Notice that if $\Char(\kappa)=0$, then all regularity computations of powers of ideals can be reduced to the case when $G$ is connected, by \cite[Lemma 4.1, Proposition 5.1]{MR3912960}. Therefore, in the following, we will only consider the case when $G$ is connected. 

Before the involved discussion, let us start by recalling the following fact from \cite{BMS}, connecting binomial edge ideals with parity binomial ideals.

\begin{Lemma}
    [{\cite[Corollary 6.2]{BMS}}]
    \label{Phi}
    Let $G$ be a bipartite graph with bipartition $[n]=V_1\sqcup V_2$ and $S=\kappa[x_1,\dots,x_n,y_1,\dots,y_n]$. Let $\Phi: S\mapsto S$ be the ring homomorphism determined by
    \[
        \Phi(x_i)=\begin{cases}
            x_i & \text{if $i\in V_1$},\\
            y_i &\text{if $i\in V_2$},
        \end{cases}
        \qquad
        \text{ and } \qquad
        \Phi(y_i)=\begin{cases}
            y_i & \text{if $i\in V_1$},\\
            x_i &\text{if $i\in V_2$}.
        \end{cases}
    \]
    Then $\Phi$ is an isomorphism and $\Phi(J_G)=\calI_G$.
\end{Lemma}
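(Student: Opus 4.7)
The plan is to verify two things: first, that $\Phi$ is a well-defined ring automorphism of $S$, and second, that the image of each natural generator of $J_G$ under $\Phi$ is the corresponding natural generator of $\calI_G$. Together these yield $\Phi(J_G)=\calI_G$, since an automorphism maps a generating set of an ideal onto a generating set of its image.

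For the first point, the assignment $\Phi$ is specified on the polynomial generators $x_1,\dots,x_n,y_1,\dots,y_n$ of $S$, so by the universal property of polynomial rings it extends uniquely to a $\kappa$-algebra endomorphism. To upgrade endomorphism to automorphism, the cleanest route is to observe that $\Phi$ is an involution: applying $\Phi$ twice to $x_i$ returns $x_i$ in both cases $i\in V_1$ and $i\in V_2$ (in the second case via $x_i\mapsto y_i\mapsto x_i$), and similarly for $y_i$, so $\Phi\circ\Phi=\operatorname{id}_S$. In particular $\Phi$ is bijective.

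For the second point, fix an edge $e=\{i,j\}\in E(G)$. Since $G$ is bipartite with bipartition $V_1\sqcup V_2$, the two endpoints of $e$ lie in different parts; without loss of generality $i\in V_1$ and $j\in V_2$. Then the generator of $J_G$ corresponding to $e$ is $f_{i,j}=x_iy_j-x_jy_i$, and applying $\Phi$ yields
\[
\Phi(f_{i,j})=\Phi(x_i)\Phi(y_j)-\Phi(x_j)\Phi(y_i)=x_i\cdot x_j - y_j\cdot y_i = x_ix_j - y_iy_j = \bar{g}_{i,j}.
\]
Thus $\Phi$ sends the natural generating set $\{f_e:e\in E(G)\}$ of $J_G$ bijectively onto the natural generating set $\{\bar g_e:e\in E(G)\}$ of $\calI_G$, so $\Phi(J_G)=\calI_G$.

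There is essentially no obstacle in this argument; the only subtlety is that bipartiteness is precisely the hypothesis that guarantees, for every edge, that exactly one endpoint is in $V_1$ and the other in $V_2$. This is what converts the sign pattern of the determinantal binomial $x_iy_j-x_jy_i$ into the parity binomial $x_ix_j-y_iy_j$ after the swap. Without bipartiteness, edges with both endpoints in the same part would be mapped to $x_ix_j-x_jx_i=0$ or $y_iy_j-y_jy_i=0$, which is why the statement is restricted to bipartite $G$.
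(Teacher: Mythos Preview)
Your proof is correct, and in fact the paper does not give its own proof of this lemma at all: it is simply quoted from \cite[Corollary 6.2]{BMS} and used as a black box. So there is nothing to compare against; your direct verification via the involution property and the edge-by-edge computation is exactly the intended elementary argument.

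One small correction to your closing remark (which is not part of the proof proper): if both endpoints $i,j$ lay in the same part, say $V_1$, then $\Phi(f_{i,j})=x_iy_j-x_jy_i=f_{i,j}$, not $0$; and if both lay in $V_2$ one gets $-f_{i,j}$. So the issue with a non-bipartite graph is not that generators get killed, but rather that such a graph admits no bipartition $V_1\sqcup V_2$ in the first place, so $\Phi$ is not even defined. The correct way to phrase the role of bipartiteness is simply that it guarantees every edge has one endpoint in each part, which is precisely what your computation uses.
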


We will fix this isomorphism $\Phi$ throughout this section. And to compute the regularity of the powers of parity binomial edge ideals of some non-bipartite graphs, we need the following lemma from \cite{MR4186617}.

\begin{Lemma}
    [{\cite[Lemma 3.3]{MR4186617}}]
    \label{nonbipartite}
    Let $G$ be a non-bipartite graph on $[n]$. Assume that there exists some $e=\{u, v\}\in E(G)$ such that $G\setminus e$ is a bipartite graph. Then,
    \[
        \calI_{G\setminus e}: \bar{g}_e
        = \Phi(J_{(G\setminus e)_e}).
    \]
\end{Lemma}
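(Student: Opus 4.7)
The plan is to transport the identity through the ring isomorphism $\Phi$ of \Cref{Phi} so that it becomes a statement about ordinary binomial edge ideals, and then to verify that statement by a primary decomposition argument. Since $G$ is non-bipartite while $G\setminus e$ is bipartite with bipartition $V(G)=V_1\sqcup V_2$, the endpoints $u,v$ of $e$ must lie on the same side, say $u,v\in V_1$; otherwise the same bipartition would make $G$ itself bipartite. Consequently $\Phi$ fixes each of $x_u,y_u,x_v,y_v$, and thus $\Phi(\bar g_e)=\bar g_e$. Combined with $\calI_{G\setminus e}=\Phi(J_{G\setminus e})$ from \Cref{Phi} and the identity $\Phi(I):\Phi(a)=\Phi(I:a)$, valid for any ring automorphism, the lemma reduces to proving
    \[
        J_H:\bar g_e=J_{H_e},\qquad \text{where } H\coloneqq G\setminus e.
    \]

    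For this, I would invoke the primary decomposition of Herzog et al., $J_H=\bigcap_T P_T(H)$, with $T$ running over admissible subsets of $V(H)$ and $P_T(H)=(x_i,y_i:i\in T)+\sum_k J_{\widetilde{H_k}}$, where $H_k$ are the connected components of $H\setminus T$ and $\widetilde{H_k}$ their completions to complete graphs. The first key claim is that for any admissible $T$, $\bar g_e\in P_T(H)$ iff $\{u,v\}\cap T\ne\emptyset$. The forward direction is immediate because $\bar g_e=x_ux_v-y_uy_v$ lies in $(x_u,y_u)$ (or $(x_v,y_v)$). For the converse, when $u,v\notin T$, every point of the variety $V(P_T(H))$ is parametrized component-by-component as a rank-one matrix $(x_i,y_i)=a_i(s_k,t_k)$ for $i$ in the $k$-th component, so that $\bar g_e$ evaluates to $a_u a_v(s_{k(u)}s_{k(v)}-t_{k(u)}t_{k(v)})$, which is generically nonzero, and hence $\bar g_e\notin P_T(H)$. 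Since each $P_T(H)$ is prime, the standard colon formula for a primary decomposition yields
    \[
        J_H:\bar g_e=\bigcap_{\substack{T\text{ admissible for }H\\ u,v\notin T}}P_T(H).
    \]

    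It remains to identify this intersection with the primary decomposition of $J_{H_e}$. The pivotal observation is that both $u$ and $v$ are simplicial in $H_e$: by construction $N_H(u)$ and $N_H(v)$ are promoted to cliques in $H_e$, while $N_{H_e}(u)=N_H(u)$ and $N_{H_e}(v)=N_H(v)$. Since a simplicial vertex cannot be a cut vertex of any induced subgraph that contains it, no admissible subset for $H_e$ may contain $u$ or $v$. For any $T\subseteq V(H)\setminus\{u,v\}$, every edge added by the $e$-operation joins two vertices that already lie in the component of $u$ (respectively $v$) inside $H\setminus T$ via the surviving edge to $u$ (or $v$), so $H\setminus T$ and $H_e\setminus T$ have identical partitions into connected components. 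Hence $P_T(H)=P_T(H_e)$ and all relevant component counts agree, so the admissibility condition transfers bijectively between $H$ and $H_e$. Combining these observations one obtains $J_H:\bar g_e=J_{H_e}$, and applying $\Phi$ then delivers the desired identity.

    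The main obstacle I anticipate is the combinatorial bookkeeping in the last step: verifying carefully that the $e$-operation preserves the component structure of $H\setminus T$ for every $T$ avoiding $\{u,v\}$ (including cases where $T$ meets $N_H(u)$ or $N_H(v)$) and that the version of ``admissible'' used in the Herzog et al.\ decomposition is the one for which simplicial vertices are automatically excluded. A secondary subtlety is confirming that each $P_T(H)$ is genuinely prime so that the colon-decomposition formula applies term by term, though this is standard for the Herzog et al.\ primes.
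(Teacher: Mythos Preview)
The paper does not prove this lemma; it is quoted from \cite[Lemma~3.3]{MR4186617} and used as a black box throughout Section~5, so there is no in-paper argument to compare against.

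That said, your proposal is correct and the steps close up. The reduction via $\Phi$ to $J_{H}:\bar g_e=J_{H_e}$ with $H=G\setminus e$ is valid: since $G$ is non-bipartite but $H$ is bipartite, $u$ and $v$ lie on the same side of the bipartition, so $\Phi^{-1}(\bar g_e)=\pm\bar g_e$ and the colon transports as you say. The primary-decomposition argument is also sound. The containment criterion $\bar g_e\in P_T(H)\Leftrightarrow T\cap\{u,v\}\neq\emptyset$ holds for the reason you give (each $P_T(H)$ is prime by the Herzog et al.\ theorem, so a generic-point check suffices), and the combinatorial bookkeeping you flag as the main obstacle does go through. For any $T$ with $u,v\notin T$ and any $i\in T$, the induced graphs $H[(V\setminus T)\cup\{i\}]$ and $H_e[(V\setminus T)\cup\{i\}]$ have identical component partitions: every extra edge of $H_e$ either has an endpoint in $T$ (and so disappears from the induced subgraph) or joins two surviving vertices of $N_H(u)$ (resp.\ $N_H(v)$), which are already linked through $u$ (resp.\ $v$) since $u,v\notin T$. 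Hence the cut-point condition on each $i\in T$ is unchanged, and admissibility transfers in both directions. Combined with the simpliciality of $u,v$ in $H_e$, this gives the desired bijection between the index sets of the two primary decompositions.
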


\subsection{Bipartite case}
The easiest case is when $G$ is bipartite, being in item \ref{complex_a} of \Cref{complex}.

\begin{Theorem}
    Let $G$ be a bipartite graph on $[n]$ such that $\calI_G$ is an almost complete intersection ideal. Then, for $t\ge 2$, we have
    \[
        \reg(S/\calI_G^t)=
        \begin{cases}
            2t+\iv(G)-1 & \text{if $G$ is a tree},\\
            2t+n-4 &\text{if $G$ is a unicyclic graph}.
        \end{cases}
    \]
\end{Theorem}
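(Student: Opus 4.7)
The plan is to reduce entirely to the binomial edge ideal results of Section~4 via the bipartite swap isomorphism. Since $G$ is bipartite, \Cref{Phi} gives a graded $\kappa$-algebra automorphism $\Phi\colon S\to S$ with $\Phi(J_G)=\calI_G$. Because $\Phi$ is degree-preserving and bijective, it induces an isomorphism of graded $S$-modules $S/\Phi(J_G^t)\cong S/J_G^t$, and hence $\reg(S/\calI_G^t)=\reg(S/J_G^t)$ for every $t\ge 1$. The same observation shows that $J_G$ is almost complete intersection if and only if $\calI_G$ is, so in both cases of the theorem the results of Section~4 are directly applicable.

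First I would handle the tree case. When $G$ is a bipartite tree with $\calI_G$ almost complete intersection, \Cref{complex}\ref{complex_a} forces $G$ to be obtained by adding an edge between two disjoint paths. By \Cref{lem6}(ii), $J_G$ is then also almost complete intersection, so \Cref{conj:2} yields $\reg(S/J_G^t)=2t+\iv(G)-1$ for $t\ge 2$, and transporting along $\Phi$ gives the claimed formula for $\calI_G$.

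Next I would dispose of the unicyclic case. If $G$ is bipartite unicyclic with $\calI_G$ almost complete intersection, the second alternative of \Cref{complex}\ref{complex_a} says $G$ arises by adding an edge between two vertices of a path $P_n$ and has even girth, in particular girth $\ge 4$. Then \Cref{lem6}(i) ensures $J_G$ is almost complete intersection, and \Cref{thm:F3-reg} gives $\reg(S/J_G^t)=2t+n-4$ for $t\ge 2$; applying $\Phi$ again transfers this to $\calI_G$.

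The only genuinely nontrivial point is the reduction step itself, namely verifying that $\Phi$ is a graded automorphism (so regularity is invariant) and that it is compatible with taking powers, i.e.\ $\Phi(J_G^t)=\calI_G^t$. Both are straightforward: $\Phi$ permutes the variables (swapping $x_i\leftrightarrow y_i$ for $i\in V_2$) and is its own inverse, and since $\Phi$ is a ring homomorphism it commutes with taking powers of ideals. After this, the work has already been done in Section~4 and no new regularity computation is needed; in particular no obstruction analogous to the odd-cycle-plus-edge exception of the parity non-bipartite case arises here.
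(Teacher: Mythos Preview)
Your proposal is correct and follows essentially the same approach as the paper's proof: use the bipartite swap isomorphism $\Phi$ from \Cref{Phi} to transfer the question to $J_G$, invoke the classification in \Cref{complex}\ref{complex_a}, and then apply \Cref{conj:2} and \Cref{thm:F3-reg}. Your version is simply more explicit about why $\Phi$ preserves regularity and commutes with powers, which the paper takes for granted.
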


\begin{proof}
    As described in \Cref{Phi}, we have $\reg(S/\calI_G^t)=\reg(S/J_G^t)$ for all $t\ge 1$. The assertion immediately follows from \Cref{thm:F3-reg}, \Cref{conj:2} and \Cref{complex} \ref{complex_a}.
\end{proof}

\subsection{Adding an edge between an odd cycle and a vertex of a path}
In this subsection, we deal with the first subcase of item \ref{complex_b} of \Cref{complex}.

When the designated vertex is an end vertex of the path, we get a balloon graph. This is also the case when we add an edge between two vertices of a path such that one of these two vertices is an end vertex of the corresponding path.

\begin{Theorem}
    Let $G$ be a balloon graph on $[n]$ having odd girth. Then $\reg(S/\calI_G^t)= 2t+n-3$ for $t\ge 1$.
\end{Theorem}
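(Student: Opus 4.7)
The plan is to invoke \Cref{thm:HT} with $\delta=2$ and generator count equal to $n=|E(G)|$ (so $\Ht(\calI_G)=n-1$ matches the almost complete intersection hypothesis). Choose $e=\{v_1,v_2\}$ to be any edge of the odd cycle of $G$; then $G\setminus e$ is a path on $[n]$ and, in particular, is bipartite, so by \Cref{complete} the ideal $\calI_{G\setminus e}$ is a complete intersection. Hence the $n-1$ binomials $\bar{g}_f$ with $f\in E(G\setminus e)$ form a regular sequence of quadrics inside $\calI_G$.

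Next, invoke \cite[Proposition 4.10]{JKS2} to enumerate the minimal generators of $\calI_G$ as $u_1,\dots,u_n$ so that $u_1,\dots,u_{n-1}$ is a regular sequence and $u_1,\dots,u_n$ is a $d$-sequence. Since any two minimal generating sets of the equigenerated ideal $\calI_G$ are related by an invertible $\kappa$-linear transformation in degree $2$, we may arrange that $(u_1,\dots,u_{n-1})=\calI_{G\setminus e}$ and that $u_n$ is a nonzero scalar multiple of $\bar{g}_e$ modulo $\calI_{G\setminus e}$. Then $(u_1,\dots,u_{n-1}):u_n=\calI_{G\setminus e}:\bar{g}_e$, which by \Cref{nonbipartite} equals $\Phi(J_{(G\setminus e)_e})$. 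Writing $G$ as an odd cycle $v_1v_2\cdots v_mv_1$ with a path $v_1u_1u_2\cdots u_\ell$ attached (so that $n=m+\ell$), a direct inspection shows that $(G\setminus e)_e$ is the $C_3$-type graph built from the triangle on $\{v_1,v_m,u_1\}$ by attaching a path of $m-2$ further vertices at $v_m$ and a path of $\ell-1$ further vertices at $u_1$. Therefore \Cref{thm:C3}, together with the fact that $\Phi$ is a graded automorphism of $S$, yields
\[
    B\coloneqq \reg\bigl(S/\bigl((u_1,\dots,u_{n-1}):u_n\bigr)\bigr)=\reg(S/J_{(G\setminus e)_e})=n-2.
\]

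To apply \Cref{thm:HT} we still have to check $B\ge \max\{\reg(S/\calI_G)-1,\,n-4\}$. The bound $n-2\ge n-4$ is trivial. For the other, Kumar's inequality $\reg(S/\calI_G)\le n-1$ from \cite{MR4257788} applies here because $G$ is connected non-bipartite, $G\setminus e$ is bipartite, and $G$ itself is not an odd cycle. Hence \Cref{thm:HT} delivers $\reg(S/\calI_G^t)=B+2t-1=2t+n-3$ for every $t\ge 2$. For the remaining $t=1$ case, apply item \ref{lem5-3} of \Cref{lem5} to the short exact sequence
\[
    0\to \frac{S}{\calI_{G\setminus e}:\bar{g}_e}(-2)\xrightarrow{\cdot \bar{g}_e}\frac{S}{\calI_{G\setminus e}}\to \frac{S}{\calI_G}\to 0;
\]
since $\reg(S/\calI_{G\setminus e})=n-1$ (a complete intersection of $n-1$ quadrics) differs from the regularity $n$ of the shifted leftmost term, the inequality in item \ref{lem5-3} is sharp, giving $\reg(S/\calI_G)=n-1=2\cdot 1+n-3$, in agreement with the claimed formula.

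The principal difficulty is arranging, inside the $d$-sequence produced by \cite[Proposition 4.10]{JKS2}, that the first $n-1$ generators actually generate the complete intersection $\calI_{G\setminus e}$. Once that alignment is in place, the colon ideal reduces to $\calI_{G\setminus e}:\bar{g}_e$, at which point the $C_3$-type combinatorics of $(G\setminus e)_e$ becomes visible via \Cref{nonbipartite} and everything else is routine book-keeping inside \Cref{thm:HT}.
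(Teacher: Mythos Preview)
Your overall strategy matches the paper's: remove a well-chosen edge $e$ so that $G\setminus e$ is a path, compute the colon $\calI_{G\setminus e}:\bar g_e$ as (the image under $\Phi$ of) the binomial edge ideal of a $C_3$-type graph, and feed $B=n-2$ into \Cref{thm:HT}. The computation of $(G\setminus e)_e$, the value $B=n-2$, and your handling of $t=1$ via the short exact sequence are all fine. (One presentational slip: ``any edge of the odd cycle'' is not right --- if $e$ is not incident to the degree-$3$ vertex then $G\setminus e$ is a $T$-type tree, not a path. Your later labelling, with $v_1$ the degree-$3$ vertex and $e=\{v_1,v_2\}$, is the correct specific choice.)

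The genuine gap is the step where you pass from the abstract $d$-sequence supplied by \cite[Proposition 4.10]{JKS2} to one with $(u_1,\dots,u_{n-1})=\calI_{G\setminus e}$. The fact that two minimal generating sets differ by an invertible $\kappa$-linear map does \emph{not} let you transport the $d$-sequence property: being a $d$-sequence depends on the particular elements and their order, and is not preserved under $\mathrm{GL}$-changes of the generating set. So the sentence ``we may arrange that $(u_1,\dots,u_{n-1})=\calI_{G\setminus e}$'' is unjustified as written, and you yourself flag this as ``the principal difficulty'' without actually resolving it. The paper sidesteps the issue entirely: rather than invoking \cite[Proposition 4.10]{JKS2} abstractly, it cites the proof of \cite[Theorem 3.8]{MR4186617} to assert directly that the natural generators $\bar g_f$, $f\in E(G\setminus e)$, followed by $\bar g_e$, already form a $d$-sequence (the first $n-1$ being regular because $\calI_{G\setminus e}$ is a complete intersection). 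Once you replace your rearrangement argument by that citation, the rest of your proof goes through; the paper also shortcuts your $t=1$ computation and the Kumar bound by quoting $\reg(S/\calI_G)=n-1$ from \cite[Theorem 4.5]{JKS}.
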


\begin{proof}
    Let $u$ be the vertex of degree $3$ in $G$ and $v$ be a neighbor of $u$ on the cycle. Set $e = \{u, v\}$ and then $G\setminus e$ is a path of length $n-1$. Thus, \cite[Theorem 4.5]{JKS} proved that $\reg(S/\calI_G)=n-1$. Notice that the natural generators of $\calI_{G\setminus e}$ form a regular sequence, while after appending $\bar{g}_e$, one gets a $d$-sequence of length $n$ by the proof of \cite[Theorem 3.8]{MR4186617}. Furthermore, $(G\setminus e)_e$ is obtained by attaching two paths to two vertices of a triangle respectively. Thus,
    \[
        \reg\left(\frac{S}{\calI_{G\setminus e}:\bar{g}_e}\right)=\reg\left(\frac{S}{J_{(G\setminus e)_e}}\right)=n-2
    \]
    by \Cref{thm:C3} and \Cref{nonbipartite}. Now, we may apply \Cref{thm:HT} and obtain that $\reg(S/\calI_G^t)= 2t+n-3$ for $t\ge 1$.
\end{proof}


Next, we consider the case when the designated vertex is an internal vertex of the path.

\begin{Theorem}
    Let $G$ be a connected graph on $[n]$ obtained by adding an edge between an odd cycle and an internal vertex of a path. Then $\reg(S/\calI_G^t)= 2t+n-4$ for $t\ge 1$.
\end{Theorem}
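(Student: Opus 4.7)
The plan is to reduce the assertion to \Cref{cor:thm_3_general} (a) with $\delta=2$. Since $\calI_G$ is equigenerated by $n$ quadrics of height $n-1$, that corollary will deliver $\reg(S/\calI_G^t)=n\delta-n+\delta t-2\delta=2t+n-4$ for all $t\ge 2$ as soon as we verify $\reg(S/\calI_G)\le n\delta-n-\delta=n-2$. If in fact equality $\reg(S/\calI_G)=n-2$ holds, the same expression also covers the $t=1$ case, and the theorem is proved. The regular-sequence and $d$-sequence properties needed by \Cref{cor:thm_3_general} are furnished by the proof of \cite[Theorem~3.8]{MR4186617}.

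To compute $\reg(S/\calI_G)$, label the odd cycle as $c_1,\ldots,c_m$ (with $m\ge 3$) and the path as $w_0,\ldots,w_p$, joined by $e'=\{c_1,w_k\}$ with $1\le k\le p-1$, so $n=m+p+1$. The crucial step is to delete a cycle edge $e=\{c_j,c_{j+1}\}$ with $2\le j\le m-1$; such an edge exists because $m\ge 3$ and is \emph{not} incident to the distinguished vertex $c_1$. Then $G\setminus e$ opens the cycle into a path $c_{j+1},\ldots,c_m,c_1,\ldots,c_j$ whose internal vertex $c_1$ is joined by $\{c_1,w_k\}$ to the internal vertex $w_k$ of $w_0,\ldots,w_p$. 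Hence $G\setminus e$ is an $H$-type tree with $\iv(G\setminus e)=n-4$, and combining \cite[Theorems~4.1 and~4.2]{JNR} with the isomorphism $\Phi$ of \Cref{Phi} yields
\[
    \reg(S/\calI_{G\setminus e})=\reg(S/J_{G\setminus e})=\iv(G\setminus e)+1=n-3.
\]

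The second virtue of this edge choice is that both $c_j$ and $c_{j+1}$ have degree $2$ in $G$, so the neighborhoods $N_{G\setminus e}(c_j)$ and $N_{G\setminus e}(c_{j+1})$ are singletons; no new edges are added and $(G\setminus e)_e=G\setminus e$. \Cref{nonbipartite} then provides
\[
    \calI_{G\setminus e}:\bar{g}_e=\Phi(J_{(G\setminus e)_e})=\Phi(J_{G\setminus e})=\calI_{G\setminus e},
\]
so $\bar{g}_e$ is a non-zero-divisor modulo $\calI_{G\setminus e}$. Plugging this into the short exact sequence
\[
    0\to S/\calI_{G\setminus e}(-2)\xrightarrow{\cdot\bar{g}_e} S/\calI_{G\setminus e}\to S/\calI_G\to 0
\]
and invoking item~\ref{lem5-3} of \Cref{lem5} (with $\reg(S/\calI_{G\setminus e}(-2))=n-1\ne n-3=\reg(S/\calI_{G\setminus e})$) reads off $\reg(S/\calI_G)=\max\{n-2,n-3\}=n-2$, finishing the computation and thus the theorem.

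The delicate step is the selection of $e$. Had we instead removed a cycle edge incident to $c_1$, as was done in the balloon argument just above, then $(G\setminus e)_e$ would acquire an extra triangle at $c_1$, and a splitting at cut vertices shows $\reg(S/J_{(G\setminus e)_e})=n-2$; the short exact sequence would then give only $\reg(S/\calI_G)=n-1$, too large for \Cref{cor:thm_3_general} (a). The hypothesis that $w_k$ is an \emph{internal} vertex of the path is precisely what makes $G\setminus e$ of $H$-type rather than $T$-type, dropping $\reg(S/\calI_{G\setminus e})$ from $n-2$ to $n-3$ and enabling the sharp conclusion $\reg(S/\calI_G)=n-2$.
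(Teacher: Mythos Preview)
Your proof is correct and follows the same high-level route as the paper: establish $\reg(S/\calI_G)=n-2$ and then invoke \Cref{cor:thm_3_general}(a). The paper, however, does not compute $\reg(S/\calI_G)$ at all; it simply cites \cite[Theorem~4.4]{JKS} for the value $n-2$ and is done in two lines. Your contribution is a self-contained derivation of this regularity: by deleting a cycle edge not incident to the distinguished vertex $c_1$, you force $G\setminus e$ to be an $H$-type tree with $\reg=n-3$ and simultaneously make $(G\setminus e)_e=G\setminus e$, so the colon ideal coincides with $\calI_{G\setminus e}$ and the short exact sequence reads off $\reg(S/\calI_G)=n-2$ cleanly. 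This is a genuine and elegant alternative to the external citation.

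One caveat about your closing commentary: the sentence ``the short exact sequence would then give only $\reg(S/\calI_G)=n-1$'' cannot be literally true, since $\reg(S/\calI_G)$ is an invariant of $G$ and you have just proved it equals $n-2$. What actually happens with the bad edge choice is that the hypotheses of \Cref{lem5}\ref{lem5-3} for \emph{equality} must fail somewhere (in fact your asserted value $\reg(S/J_{(G\setminus e)_e})=n-2$ is not correct in general), so the sequence would yield only an upper bound, not a contradiction. This does not affect your main argument, but you should either correct or delete that paragraph.
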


\begin{proof}
    By the proof of \cite[Theorem 3.8]{MR4186617}, $\calI_G$ is an almost complete intersection of height $n-1$.
    As \cite[Theorem 4.4]{JKS} already showed that $\reg(R/\calI_G)=n-2$, it follows that $\reg(S/\calI_G^t)=n+2t-4$ for $t\ge 1$ from the first case of \Cref{cor:HT_general}.
\end{proof}

\subsection{Adding an edge between two internal vertices of a path} 

In this subsection, we deal with the second subcase of item \ref{complex_b} of \Cref{complex}. The related graphs are also the $G_2$-type graphs that we have studied in the previous section.


\begin{Theorem}
    Let $G$ be a graph on $[n]$ obtained by adding an edge $e$ between two internal vertices of a path $P_n$ such that the girth of $G$ is odd. Then $\reg(S/\calI_G^t)=2t+n-4$ for $t\ge 1$.
\end{Theorem}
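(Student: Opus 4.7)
The plan is to apply \Cref{cor:HT_general}(a), in direct parallel with the proof of the immediately preceding theorem. By item \ref{complex_b} of \Cref{complex} together with the proof of \cite[Theorem 3.8]{MR4186617}, the ideal $\calI_G$ is an almost complete intersection of height $n-1$, generated by $n$ forms of degree $\delta = 2$ admitting a $d$-sequence ordering (with the first $n-1$ generators, corresponding to the edges of $P_n = G\setminus e$, forming a regular sequence). With $\delta = 2$, the threshold $n\delta - n - \delta$ appearing in case (a) equals $n - 2$, so the corollary will apply once we verify $\reg(S/\calI_G) \le n - 2$.

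For the base-case regularity I would cite \cite[Theorem 4.4]{JKS}, the same source used in the proof of the preceding theorem, which computes the regularity of the parity binomial edge ideals of the graphs listed in \Cref{complex}: it yields $\reg(S/\calI_G) = n - 2$. Then \Cref{cor:HT_general}(a) gives
\[
\reg(S/\calI_G^t) = n\delta - n + \delta t - 2\delta = 2t + n - 4 \qquad \text{for } t \ge 2,
\]
while the $t = 1$ case is precisely the cited equality, so the formula holds for all $t \ge 1$.

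The main potential obstacle is confirming $\reg(S/\calI_G) = n - 2$. Should \cite[Theorem 4.4]{JKS} not explicitly cover this graph class, I would instead compute $B \coloneqq \reg(S/(\calI_{G\setminus e}:\bar{g}_e))$ and invoke \Cref{thm:HT}. Because $G \setminus e = P_n$ is bipartite, \Cref{nonbipartite} together with the isomorphism $\Phi$ of \Cref{Phi} identifies this colon ideal as $\Phi(J_{(P_n)_e})$, so $B = \reg(S/J_{(P_n)_e})$. Here $(P_n)_e$ is a block graph built from two triangles $\{i-1, i, i+1\}$ and $\{j-1, j, j+1\}$ joined by sub-paths. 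Iterated splittings at the free cut vertices $i-1$ and $j+1$ via \Cref{lem:reduction_by_path} peel off the pendant paths $P_{i-1}$ and $P_{n-j}$; the residual two-triangle-plus-path graph is then split at the interior cut vertex $i+1$ (free in both the triangle and the pendant balloon) using the more general cut-vertex formula of \cite[Theorem 3.1]{JNR}, and one final use of \Cref{lem:reduction_by_path} at $j-1$ handles the balloon. Summing the contributions with $\reg(S/J_{K_3}) = 1$ and $\reg(S/J_{P_m}) = m - 1$ gives
\[
B = (j-i) + (i-2) + (n-j-1) = n - 3.
\]
Since $B = n-3 \ge n-4 = n\delta - n - 3\delta + 2$ and $B = n-3 \ge \reg(S/\calI_G) - \delta + 1$, \Cref{thm:HT} applies and delivers $\reg(S/\calI_G^t) = B + \delta t - 1 = 2t + n - 4$, recovering the desired formula.
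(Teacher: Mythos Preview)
Your overall strategy---reduce to computing $\reg(S/\calI_G)$ and then invoke \Cref{cor:HT_general}(a)---matches the paper's. The gap is in actually establishing $\reg(S/\calI_G)=n-2$.

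Your primary citation to \cite[Theorem 4.4]{JKS} almost certainly does not cover this graph class: the paper invokes that result only for the preceding case (odd cycle joined to an internal path vertex) and here instead gives a direct two-case argument, which it would not do if the result were already in \cite{JKS}.

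Your backup is circular. You correctly compute $B=\reg\bigl(S/(\calI_{G\setminus e}:\bar g_e)\bigr)=n-3$ via \Cref{nonbipartite} and the block-graph decomposition of $(P_n)_e$. But to apply \Cref{thm:HT} you need $B\ge \reg(S/\calI_G)-\delta+1$, i.e.\ $\reg(S/\calI_G)\le n-2$, which is exactly what you are trying to prove. The short exact sequence you implicitly use,
\[
0\to \frac{S}{\calI_{P_n}:\bar g_e}(-2)\to \frac{S}{\calI_{P_n}}\to \frac{S}{\calI_G}\to 0,
\]
does not resolve this: the left term has regularity $(n-3)+2=n-1$ and the middle term has regularity $n-1$ as well (since $J_{P_n}$ is a complete intersection of $n-1$ quadrics), so \Cref{lem5}\ref{lem5-3} only yields $\reg(S/\calI_G)\le n-1$, and equality with $n-1$ is not excluded. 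Moreover, even if \Cref{thm:HT} applied, it handles only $t\ge 2$; the $t=1$ case of the statement still demands $\reg(S/\calI_G)=n-2$ on the nose.

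The paper avoids this by deleting a \emph{different} edge: rather than removing the chord $e$ (which leaves $P_n$ with its maximal regularity $n-1$), it removes an edge $e'$ of the odd cycle chosen so that $G\setminus e'$ is an $H$-type tree (if girth $\ge 5$, pick $e'$ disjoint from $e$) or a $T$-type tree (if girth $3$, pick an edge of the triangle other than $e$). These trees have regularity $n-3$ and $n-2$ respectively, strictly less than the shifted colon term, so \Cref{lem5}\ref{lem5-3} now forces $\reg(S/\calI_G)=n-2$. That choice of edge is the missing idea.
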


\begin{proof}
    We claim that $\reg(S/\calI_G)=n-2$. With this, since $\calI_G$ is an almost complete intersection of height $n-1$ by the proof of \cite[Theorem 3.8]{MR4186617}, it follows from the first case of \Cref{cor:HT_general} that $\reg(S/\calI_G^t)=2t+n-4$ for $t\ge 1$.

    As for the claim, we consider the following two cases.
    \begin{enumerate}[i]
        \item If the girth of $G$ is at least $5$, then we may choose an edge $e'$ on this induced odd cycle such that $e'\cap e=\emptyset$. It is clear that $G\setminus e'$ is an $H$-type tree that we have studied in the previous section. In particular, it is bipartite. Thus
            \[
                \reg(S/(\calI_{G\setminus e'}:\bar{g}_{e'}))=\reg(S/J_{(G\setminus e')_{e'}})=\reg(S/J_{G\setminus e'})=\reg(S/\calI_{G\setminus e'})=n-3,
            \]
            by  \Cref{conj:2}, \Cref{Phi} and \Cref{nonbipartite}. Applying item \ref{lem5-3} of \Cref{lem5} to the following exact sequence
            \[
                0\to \frac{S}{\calI_{G\setminus e'}:\bar{g}_{e'}}(-2) \xrightarrow{\cdot \bar{g}_{e'}} \frac{S}{\calI_{G\setminus e'}} \to \frac{S}{\calI_G} \to 0,
            \]
            we obtain $\reg(S/\calI_G)=n-2$, confirming the claim in this case.
        \item If the girth of $G$ is $3$, then
            we may assume that the triangle has the vertex set $\{1,2,4\}$ and $e=\{1,2\}$. Then,
            we will choose the edge $e''=\{2,4\}$ of the triangle. It is clear that $G\setminus e''$ is a $T$-type tree that we have studied in the previous section. In particular, it is bipartite. Thus
            \[
                \reg\left(\frac{S}{\calI_{G\setminus e''}}\right)=\reg\left(\frac{S}{J_{G\setminus e''}}\right)=n-2
            \]
            by \Cref{conj:2} and \Cref{Phi}, and
            \begin{equation*}
                \reg(S/(\calI_{G\setminus e''}:\bar{g}_{e''}))=\reg(S/J_{(G\setminus e'')_{e''}})
            \end{equation*}
            by \Cref{nonbipartite}. In the following, we will prove that $\reg(S/J_{(G\setminus e'')_{e''}})=n-3$.

            To show this, let $H$ be the graph on the set $[5]$ with edges
            \[
                \{1,2\},\{2,3\},\{1,3\},\{1,4\},\{1,5\}.
            \]
            Notice that $\widetilde{G}\coloneqq (G\setminus e'')_{e''}$ is (isomorphic to) the graph obtained from $H$ by adding a path to the vertices $3$ and $5$ respectively; see also \Cref{Fig:5}.
            \begin{figure}[htbp]%
                \centering
                \begin{tikzpicture}[thick, scale=1.2, every node/.style={scale=0.8}]]
                    \shade [shading=ball, ball color=black]  (0.5,3.8) circle (.07);
                    \shade [shading=ball, ball color=black]  (1.5,3.8) circle (.07);
                    \shade [shading=ball, ball color=black]  (2.5,3.8) circle (.07);
                    \shade [shading=ball, ball color=black]  (3.5,3.8) circle (.07) node [above] {\scriptsize$3$};
                    \shade [shading=ball, ball color=black]  (3,3) circle (.07) node [below] {\scriptsize$2$};
                    \shade [shading=ball, ball color=black]  (4,3) circle (.07) node [below] {\scriptsize$1$};
                    \shade [shading=ball, ball color=black]  (5,3) circle (.07) node [below] {\scriptsize$4$};
                    \shade [shading=ball, ball color=black]  (4.6,3.8) circle (.07) node [above] {\scriptsize$5$};
                    \shade [shading=ball, ball color=black]  (5.6,3.8) circle (.07);
                    \shade [shading=ball, ball color=black]  (6.6,3.8) circle (.07);
                    \shade [shading=ball, ball color=black]  (7.6,3.8) circle (.07);
                    \draw [dashed] (0.5,3.8) -- (1.5,3.8) -- (2.5,3.8) -- (3.5,3.8);
                    \draw [dashed] (4.6,3.8) -- (5.6,3.8) -- (6.6,3.8) -- (7.6,3.8);
                    \draw [thick] (4.6,3.8) -- (4,3) -- (3.5,3.8) -- (3,3) -- (4,3) -- (5,3);
                \end{tikzpicture}
                \caption{The graph $\widetilde{G}=(G\setminus e'')_{e''}$}
                \label{Fig:5}
            \end{figure}
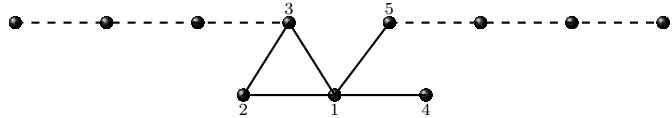
            By \Cref{lem:reduction_by_path}, it suffices to assume that these two paths do not appear. In other words, we may assume that $\widetilde{G}$ is precisely $H$.
            The last piece is to show that $\reg(S/J_{\widetilde{G}})=2$ in this case. But it has been confirmed by \cite[Theorem 3.2 (b)]{MR3859963}. In short, we have proved that $\reg(S/(\calI_{G\setminus e''}:\bar{g}_{e''}))=n-3$.

            Finally, we consider the following short exact sequence
            \[
                0\to \frac{S}{\calI_{G\setminus e''}:\bar{g}_{e''}}(-2) \xrightarrow{\cdot \bar{g}_{e''}} \frac{S}{\calI_{G\setminus e''}} \to \frac{S}{\calI_G} \to 0.
            \]
            It follows from item \ref{lem5-3} of \Cref{lem5} that $\reg(S/\calI_G)=n-2$, confirming the claim stated at the beginning. And this completes the proof. \qedhere
    \end{enumerate}
\end{proof}	

\subsection{Adding a path to each vertex of a triangle}
In this subsection, we deal with the third subcase of item \ref{complex_b} of \Cref{complex}. The related graphs are also the $C_3$-type graphs which have been studied in the previous section.

\begin{Theorem}
    Let $G$ be a graph on $[n]$ obtained by attaching a path of length $\ge 1$ to each vertex of a triangle. Then $\reg(S/\calI_G^t)=2t+n-4$ for $t\ge 2$.
\end{Theorem}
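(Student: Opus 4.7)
The plan is to apply the first case of \Cref{cor:HT_general} with $\delta = 2$, for which it suffices to verify that $\calI_G$ is an equigenerated almost complete intersection of height $n - 1$ with $\reg(S/\calI_G) \le n - 2$. The height condition is immediate: by \Cref{complex}(b)(iii), $\calI_G$ is almost complete intersection, and since $G$ has $n = 3 + p_1 + p_2 + p_3$ edges (the triangle together with three attached paths of lengths $p_i \ge 1$), one has $\mu(\calI_G) = n$ and thus $\Ht(\calI_G) = n - 1$. The bulk of the work is to establish the sharper equality $\reg(S/\calI_G) = n - 2$.

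To that end, I pick an edge $e = \{a, b\}$ of the triangle. Since the triangle is the unique odd cycle of $G$, the subgraph $G \setminus e$ is a bipartite tree, and I consider the short exact sequence
\[
0 \to \frac{S}{\calI_{G\setminus e} : \bar{g}_e}(-2) \xrightarrow{\cdot \bar{g}_e} \frac{S}{\calI_{G\setminus e}} \to \frac{S}{\calI_G} \to 0.
\]
By \Cref{Phi}, $\reg(S/\calI_{G\setminus e}) = \reg(S/J_{G\setminus e})$; and $G \setminus e$ is visibly a $T$-type tree in the sense of the previous section (a spider at the remaining triangle vertex $c$ with three legs, where the third leg is attached at the internal vertex $c$ of the path going through $a, c, b$), so the argument used in the proof of \Cref{conj:2} (via \cite[Theorems 4.1 and 4.2]{JNR}) produces $\reg(S/J_{G\setminus e}) = \iv(G\setminus e) + 1 = n - 2$. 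On the colon side, \Cref{nonbipartite} gives $\reg(S/(\calI_{G\setminus e} : \bar{g}_e)) = \reg(S/J_{(G\setminus e)_e})$.

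The graph $(G\setminus e)_e$ is built from $G \setminus e$ by adjoining the two edges $\{c, a_1\}$ and $\{c, b_1\}$, where $a_1, b_1$ are the vertices of the attached paths nearest to $a, b$. It therefore contains two triangles $\{c, a, a_1\}$ and $\{c, b, b_1\}$ sharing $c$, together with the three pendant paths inherited from $G$. Iterated use of \Cref{lem:reduction_by_path}, splitting in turn at $a_1$, $b_1$, and $c_1$ (each a free vertex of the corresponding reduced subgraph: the unique maximal clique containing the split vertex is one of the two triangles or the pendant edge $\{c, c_1\}$), reduces the problem to a small core block graph $\tilde G$ consisting of two triangles sharing $c$ plus a leaf $c_1$ at $c$. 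This yields $\reg(S/J_{(G\setminus e)_e}) = \reg(S/J_{\tilde G}) + (n - 6)$. To identify $\reg(S/J_{\tilde G}) = 3$, I will run one more short exact sequence, this time removing the edge $\{c, b\}$ from $\tilde G$, then apply \Cref{lem:reduction_by_path} once more to peel off the resulting pendant $b$ so that the base graph becomes isomorphic to the 5-vertex graph $H$ from the previous subsection (for which $\reg(S/J_H) = 2$ by \cite[Theorem 3.2(b)]{MR3859963}); the corresponding colon graph turns out to be a $K_5$ with a pendant, which has regularity $2$ again via \Cref{lem:reduction_by_path}. Item \ref{lem5-3} of \Cref{lem5} then delivers $\reg(S/J_{\tilde G}) = 3$, whence $\reg(S/J_{(G\setminus e)_e}) = n - 3$.

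Finally, since $\reg(S/\calI_{G\setminus e}) = n - 2$ and $\reg((S/(\calI_{G\setminus e} : \bar{g}_e))(-2)) = n - 1$ differ, item \ref{lem5-3} of \Cref{lem5} applied to the first displayed sequence yields $\reg(S/\calI_G) = n - 2$, after which the first case of \Cref{cor:HT_general} produces $\reg(S/\calI_G^t) = 2t + n - 4$ for all $t \ge 2$. The main obstacle is the path-peeling step: one must verify at every stage that the chosen split vertex is free in the reduced graph when applying \Cref{lem:reduction_by_path} to the chordal graph $(G\setminus e)_e$, and in addition nail down $\reg(S/J_{\tilde G}) = 3$ through another round of SES bookkeeping. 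Once those are in hand, everything else fits together routinely.
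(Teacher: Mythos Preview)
Your overall architecture matches the paper's exactly: remove a triangle edge $e$, use \Cref{nonbipartite} for the outer colon, peel the three paths via \Cref{lem:reduction_by_path} to reduce to the six-vertex graph $\tilde G$ (two triangles sharing $c$ plus a leaf $c_1$), and finish with \Cref{cor:HT_general}. The gap is in your computation of $\reg(S/J_{\tilde G})$.

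Your inner short exact sequence for $\tilde G$ silently assumes that
\[
J_{\tilde G\setminus\{c,b\}}:f_{c,b}\;=\;J_{(\tilde G\setminus\{c,b\})_{\{c,b\}}},
\]
i.e.\ that the colon is again a binomial edge ideal. That identity is \emph{not} valid here, because $c$ and $b$ share the common neighbour $b_1$ in $\tilde G\setminus\{c,b\}$: from $x_{b_1}f_{c,b}=x_c f_{b_1,b}-x_b f_{b_1,c}\in J_{\tilde G\setminus\{c,b\}}$ one sees that the linear forms $x_{b_1},y_{b_1}$ lie in the colon but certainly not in the binomial edge ideal of your ``$K_5$ with a pendant''. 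So the colon regularity you feed into \Cref{lem5}\ref{lem5-3} is wrong, and the conclusion $\reg(S/J_{\tilde G})=3$ is not justified. In fact the paper obtains $\reg(S/J_{\tilde G})=2$ by a direct citation of \cite[Theorem~3.2(b)]{MR3859963}, so that $\reg(S/J_{(G\setminus e)_e})=n-4$, not $n-3$.

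This has a knock-on effect on your last step: with the correct value $n-4$, the outer sequence has $\reg(M)=\reg(N)=n-2$, so \Cref{lem5}\ref{lem5-3} yields only $\reg(S/\calI_G)\le n-2$, not the equality you announce. Fortunately, the inequality is all that \Cref{cor:HT_general}(a) requires (with $\delta=2$ and $\mu(\calI_G)=n$), and this is precisely how the paper concludes. So your proof is repaired by (i) replacing the faulty SES for $\tilde G$ with the citation $\reg(S/J_{\tilde G})=2$, and (ii) relaxing the claim to $\reg(S/\calI_G)\le n-2$.
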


\begin{proof}
    Suppose that the triangle is $C_3$ with vertex set $\{1,2,3\}$. Let $e=\{2,3\}$ be an edge of this triangle. Then, $G\setminus e$ is a $T$-type tree with $\reg(S/\calI_{G\setminus e})=n-2$ by \Cref{Phi} and \Cref{conj:2}. Since $G\setminus e$ is bipartite, one has
    \[
        \reg(S/(\calI_{G\setminus e}:\bar{g}_e))=\reg(S/J_{(G\setminus e)_e}),
    \]
    which we claim to be $n-4$. To show this, let $H$ be the graph on the set $[6]$ with edges
    \[
        \{1,2\},\{2,5\},\{1,5\},\{1,3\},\{3,6\},\{1,6\},\{1,4\}.
    \]
    Then $\widetilde{G}\coloneqq (G\setminus e)_e$ is (isomorphic to) the graph obtained from $H$ by adding a path to the vertices $4$, $5$ and $6$ respectively; see also \Cref{Fig:6}.
    \begin{figure}[htbp]%
        \centering
        \begin{tikzpicture}[thick, scale=1.2, every node/.style={scale=0.8}]]
            \shade [shading=ball, ball color=black]  (3,2.2) circle (.07);
            \shade [shading=ball, ball color=black]  (2,2.2) circle (.07);
            \shade [shading=ball, ball color=black]  (1,2.2) circle (.07);
            \shade [shading=ball, ball color=black]  (0.5,3.8) circle (.07);
            \shade [shading=ball, ball color=black]  (1.5,3.8) circle (.07);
            \shade [shading=ball, ball color=black]  (2.5,3.8) circle (.07);
            \shade [shading=ball, ball color=black]  (3.5,3.8) circle (.07) node [above] {\scriptsize$5$};
            \shade [shading=ball, ball color=black]  (3,3) circle (.07) node [below] {\scriptsize$2$};
            \shade [shading=ball, ball color=black]  (4,3) circle (.07) node [above] {\scriptsize$1$};
            \shade [shading=ball, ball color=black]  (4,2.2) circle (.07) node [below] {\scriptsize$4$};
            \shade [shading=ball, ball color=black]  (5,3) circle (.07) node [below] {\scriptsize$3$};
            \shade [shading=ball, ball color=black]  (4.6,3.8) circle (.07) node [above] {\scriptsize$6$};
            \shade [shading=ball, ball color=black]  (5.6,3.8) circle (.07);
            \shade [shading=ball, ball color=black]  (6.6,3.8) circle (.07);
            \shade [shading=ball, ball color=black]  (7.6,3.8) circle (.07);
            \draw [dashed] (0.5,3.8) -- (1.5,3.8) -- (2.5,3.8) -- (3.5,3.8);
            \draw [dashed] (4, 2.2) -- (1, 2.2);
            \draw [dashed] (4.6,3.8) -- (5.6,3.8) -- (6.6,3.8) -- (7.6,3.8);
            \draw [thick] (4,3) -- (3.5,3.8) -- (3,3) -- (4,3) -- (5,3) -- (4.6, 3.8) -- (4,3) -- (4,2.2);
        \end{tikzpicture}
        \caption{The graph $\widetilde{G}=(G\setminus e)_{e}$}
        \label{Fig:6}
    \end{figure}
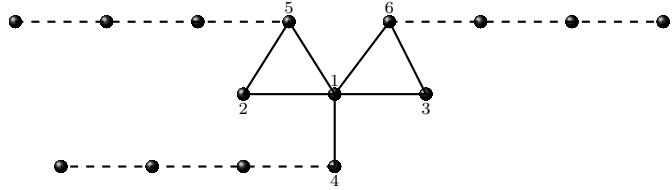
    By \Cref{lem:reduction_by_path}, it suffices to assume that these three paths do not exist and hence $\widetilde{G}=H$. And we are reduced to prove that $\reg(S/J_H)=2$, which is clear from \cite[Theorem 3.2 (b)]{MR3859963}. In short, we have proved that $\reg(S/(\calI_{G\setminus e}:\bar{g}_e))=n-4$.

    %
    Finally, we consider the following short exact sequence
    \[
        0\to \frac{S}{\calI_{G\setminus e}:\bar{g}_e}(-2)\xrightarrow{\cdot \bar{g}_{e}} \frac{S}{\calI_{G\setminus e}} \to \frac{S}{\calI_G} \to 0.
    \]
    It follows from item \ref{lem5-3} of \Cref{lem5} that $\reg(S/\calI_G)\le n-2$. With this, we can apply the first case of \Cref{cor:thm_3_general} and  item \ref{complex_b} of \Cref{complex} to obtain that $\reg(S/\calI_G^t)=2t+n-4$ for $t\ge 2$.
\end{proof}

\subsection{Adding a chord in an odd cycle}
In this subsection, we will consider the case when $G$ is a graph obtained by adding a chord in an odd cycle.
It is the graph given in item \ref{complex_d} of \Cref{complex}.

Before starting the discussion, we need a new notion from \cite{JKS}.

\begin{Definition}
    Let $G_1$ and $G_2$ be two subgraphs of a graph $G$. If $G_1\cap G_2$ is the complete graph $K_m$,
    then $G$ is called the \emph{clique sum} of $G_1$ and $G_2$ along $K_m$, denoted by $G_1\cup_{K_m} G_2$.
    If $m=2$, then it is the clique sum of $G_1$ and $G_2$ along an edge $e$ and will be denoted by $G_1\cup_{e} G_2$.
\end{Definition}

\begin{Theorem}
    Let $G$ be a graph obtained by adding a chord $e$ in an odd cycle $C_{n}$. Then $\reg(S/\calI_G^t)=2t+n-3$ for $t\ge 2$.
\end{Theorem}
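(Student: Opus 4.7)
I would apply the first case of \Cref{cor:thm_3_general} to $U=\calI_G$. By \Cref{complex}\ref{complex_d}, $\calI_G$ is an almost complete intersection with $\Ht(\calI_G)=n$ and $\mu(\calI_G)=n+1$; moreover, by the proof of \cite[Theorem 3.11]{MR4186617}, the generators of $\calI_G$ admit an ordering as a $d$-sequence whose first $n$ elements generate the complete intersection $\calI_{C_n}$ (cf.~\Cref{complete}) and therefore form a regular sequence. In the notation of \Cref{cor:thm_3_general}, the ambient parameters are $n+1$ and $\delta=2$, so the hypothesis of case (a) reads $\reg(S/\calI_G)\le n-1$ and its conclusion gives exactly the desired formula $\reg(S/\calI_G^t)=2t+n-3$ for $t\ge 2$.

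To verify the upper bound $\reg(S/\calI_G)\le n-1$, let the chord $e=\{v_1,v_k\}$ split $C_n$ into an odd sub-cycle $C'$ of length $k$ (with $k$ odd, $3\le k\le n-2$) and an even sub-cycle $C''$ of length $n-k+2$. Choose $e''=\{v_1,v_2\}\in E(C')\setminus\{e\}$; removing $e''$ destroys both odd cycles of $G$ (namely $C'$ and the outer $C_n$), so $G\setminus e''$ is bipartite. The short exact sequence
\[
0\to \frac{S}{\calI_{G\setminus e''}:\bar g_{e''}}(-2)\xrightarrow{\cdot \bar g_{e''}} \frac{S}{\calI_{G\setminus e''}}\to \frac{S}{\calI_G}\to 0,
\]
combined with \Cref{Phi} (giving $\reg(S/\calI_{G\setminus e''})=\reg(S/J_{G\setminus e''})$) and \Cref{nonbipartite} (giving $\reg(S/(\calI_{G\setminus e''}:\bar g_{e''}))=\reg(S/J_{(G\setminus e'')_{e''}})$), reduces the desired bound via item \ref{lem5-3} of \Cref{lem5} to showing that both $\reg(S/J_{G\setminus e''})$ and $\reg(S/J_{(G\setminus e'')_{e''}})$ are at most $n-2$.

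Now, $G\setminus e''$ is the even cycle $C''$ with a pendant path of $k-1$ vertices attached at $v_k$, while $(G\setminus e'')_{e''}$ differs from it by adding the chord $\{v_k,v_n\}$, which creates the triangle $\{v_1,v_k,v_n\}$ and turns $C''$ into a \emph{book} $B$ (the triangle sharing the edge $\{v_k,v_n\}$ with the odd cycle on $\{v_k,v_{k+1},\ldots,v_n\}$). Iterated application of \Cref{lem:reduction_by_path}---supplemented, at the final attachment, by an auxiliary short exact sequence using the identity $J_{H\setminus e_0}:f_{e_0}=J_{(H\setminus e_0)_{e_0}}$ together with the induced-path lower bound $\reg(S/J_H)\ge \ell(H)$---peels off the pendant path and contributes $k-2$ to each regularity, reducing the computation to $\reg(S/J_{C''})=n-k$ in the first case and to $\reg(S/J_B)$ in the second. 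For the book, the apex $v_1$ is simplicial (its closed neighborhood $\{v_1,v_k,v_n\}$ is a clique), and an analogous clique-sum reduction yields $\reg(S/J_B)=\reg(S/J_{C_{n-k+1}})+1=n-k$. Summing the contributions, both regularities equal $n-2$, completing the proof. The chief technical obstacle is the final peeling step: since $v_k$ is internal (not free) in the residual core $C''$ (respectively $B$), \Cref{lem:reduction_by_path} cannot be applied directly at the last stage, and one must fall back on a supplementary short exact sequence argument to pin down the final regularity value.
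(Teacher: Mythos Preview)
Your overall strategy---bound $\reg(S/\calI_G)$ from above and then invoke the first case of \Cref{cor:thm_3_general} (equivalently \Cref{thm-3}) with $n+1$ generators and $\delta=2$---matches the paper's. The difference lies in the edge you remove to reach a bipartite graph and in how you handle the resulting colon ideal. The paper picks, when the odd sub-cycle has length $\ge 5$, an edge $e'$ on that cycle \emph{disjoint} from the chord $e$. This is the key simplification: since each endpoint of $e'$ then has a single neighbour in $G\setminus e'$, the operation $(\cdot)_{e'}$ adds no new edges, so $(G\setminus e')_{e'}=G\setminus e'$ is a $G_2$-type graph with regularity $n-3$ already known from \Cref{thm:F3-reg}. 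The triangle case ($k=3$) is then treated separately via clique sums with $K_3$ and $K_4$.

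Your choice $e''=\{v_1,v_2\}$, adjacent to the chord, makes $G\setminus e''$ a balloon (fine: its regularity is $n-2$ by \cite[Remark 3.15]{JKS}), but it forces you to analyse $(G\setminus e'')_{e''}$, which acquires the extra edge $\{v_k,v_n\}$ and becomes a book with a pendant path. Here your argument has a genuine gap. You assert that peeling the pendant path contributes $k-2$ and leaves the bare book $B$, giving $\reg(S/J_{(G\setminus e'')_{e''}})=n-2$. But \Cref{lem:reduction_by_path} only peels $k-3$ edges (stopping at the graph $H=B+\{\text{one pendant at }v_k\}$, since $v_k$ is internal in $B$), and the ``final attachment'' does \emph{not} contribute $+1$: for instance when $n=5$, $k=3$ one checks directly that $\reg(S/J_H)=\reg(S/J_B)=2$, not $3$. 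Your proposed workaround invokes the induced-path \emph{lower} bound $\reg(S/J_H)\ge\ell(H)$, which cannot supply the missing upper bound. A correct short-exact-sequence computation at this last step (using $(H\setminus e_0)_{e_0}\cong K_4\cup_e C_{n-k}$ and \cite[Proposition 3.11]{JKS}) in fact yields $\reg(S/J_H)=n-k$, whence $\reg(S/J_{(G\setminus e'')_{e''}})\le n-3$, \emph{strictly smaller} than your claimed $n-2$. The desired bound $\reg(S/\calI_G)\le n-1$ then still follows, so your approach can be repaired---but as written the bookkeeping is wrong and the justification for the crucial last step is missing.
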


\begin{proof}
    Let $e=\{u,v\}$ be a chord in the odd cycle $C_{n}$. Then $\calI_{G\setminus e}$ is a complete intersection by \Cref{complete}. Moreover, $\calI_{G\setminus e}:\bar{g}_e^2=\calI_{G\setminus e}:\bar{g}_e$. Hence, $\calI_G$ is  an almost complete intersection ideal generated by a quadratic $d$-sequence of length $n+1$.	 And we claim that $\reg(S/\calI_G)=n-2$. With this, we can then obtain $\reg(S/\calI_G^t)=2t+n-3$ for $t\ge 2$ by \Cref{thm-3}.

    Notice that $G$ is the clique sum of an odd cycle $C_{\text{odd}}$ and an even cycle $C_{\text{even}}$ along the chord $e$.
    Thus, to prove the claim, we need to distinguish into the following two cases.
    \begin{enumerate}[a]
        \item If the girth of this induced odd cycle $C_{\text{odd}}$ is at least $5$, then we take an edge $e'$ from this odd cycle such that $e'\cap e=\emptyset$. Thus $(G\setminus e')_{e'}=G\setminus e'$ is a $G_2$-type graph that we have discussed before. Meanwhile, it is bipartite. Therefore, $\reg(S/(\calI_{G\setminus e'}:\bar{g}_{e'}))=\reg(S/J_{(G\setminus e')_{e'}})=\reg(S/J_{G\setminus e'})=n-3$ by \Cref{nonbipartite} and the proof of \Cref{thm:F3-reg}. Simultaneously, $\reg(S/\calI_{G\setminus e'})=\reg(S/J_{G\setminus e'})=n-3$ since $G\setminus e'$ is bipartite. Thus, if we look at the short exact sequence
            \begin{equation}
                0\to \frac{S}{\calI_{G\setminus e'}:\bar{g}_{e'}}(-2)\xrightarrow{\cdot \bar{g}_{e'}} \frac{S}{\calI_{G\setminus e'}} \to \frac{S}{\calI_G} \to 0,
                \label{eqn:h}
            \end{equation}
            then we can obtain the claimed $\reg(S/\calI_G)=n-2$ by item \ref{lem5-3} of \Cref{lem5}.

        \item If the girth of the induced odd cycle $C_{\text{odd}}$ is $3$. We may assume that the edges of $G$ are
            $\{1,2\},\{2,3\},\dots,\{n-1,n\},\{1,n\},\{2,n\}$
            with $n\ge 5$ being an odd number. In this case, the chord $e=\{2,n\}$. For the edge $e'=\{1,n\}$ of the triangle, $G\setminus e'$ is a balloon graph on $[n]$ with an even girth. By \cite[Remark 3.15]{JKS}, we obtain $\reg({S}/{\calI_{G\setminus e'}})=\reg({S}/{J_{G\setminus e'}})=n-2$.

            Next, we will prove that $\reg (S/(\calI_{G\setminus e'}:\bar{g}_{e'}))=n-3$. It is obvious that $H\coloneqq (G\setminus e')_{e'}$ has edges $\{1,2\},\{2,3\},\ldots,\{n-1,n\},\{2,n-1\},\{2,n\}$.
            In the following, let $e''=\{1,2\}$.
            \begin{enumerate}[i]
                \item If $n=5$, then $(H\setminus e'')_{e''}$ is simply $K_4$. We have $\reg(S/J_{(H\setminus e''})_{e''})=1=n-4$ by \cite[Theorem 2.1]{MR2946102}.
                \item If $n\ge 7$, then $(H\setminus e'')_{e''}$ is the clique sum of the complete graph $K_4$ and the cycle $C_{n-3}$ along an edge. Thus, it follows from \cite[Proposition 3.11]{JKS} that $\reg(S/J_{(H\setminus e''})_{e''})=(n-3)-1=n-4$.
            \end{enumerate}
            Meanwhile, $H\setminus e''$ is the clique sum of $K_3$ and $C_{n-2}$ along an edge.
            Thus, $\reg(S/J_{H\setminus e''})=(n-2)-1=n-3$ by \cite[Proposition 3.11]{JKS}. Since $J_{H\setminus e''}:f_{e''}=J_{(H\setminus e'')_{e''}}$ by \cite[Theorem 3.7]{MR3169597}. Thus, it follows from the standard short exact sequence
            \[
                0\to \frac{S}{J_{H\setminus e''}:f_{e''}}(-2)\xrightarrow{\cdot f_{e''}} \frac{S}{J_{H\setminus e''}} \to \frac{S}{J_H} \to 0
            \]
            and item \ref{lem5-3} of \Cref{lem5} that $\reg(S/J_H)=n-3$.
            %
            Since $G\setminus e'$ is bipartite, it follows from \Cref{nonbipartite} that
            \[
                \reg\left(\frac{S}{\calI_{G\setminus e'}:\bar{g}_{e'}}\right)=\reg\left(\frac{S}{J_{(G\setminus e')_{e'}}}\right)=\reg\left(\frac{S}{J_{H}}\right)=n-3,
            \]
            as wished.

            Finally, we can obtain $\reg(S/\calI_G)=n-2$ by applying item \ref{lem5-3} of \Cref{lem5} to the following short exact sequence \eqref{eqn:h}, confirming the claim in this case. And this completes the proof. \qedhere
    \end{enumerate}
\end{proof}	

\subsection{Adding a chord in an even unicyclic graph }

In this final subsection, we will consider the case when $G$ is obtained by adding a chord in an even unicyclic graph. It is the graph given in item \ref{complex_e} of \Cref{complex}.

\begin{Theorem}
    Let $G$ be a non-bipartite graph on $[n]$ obtained by adding a chord in an even cycle $C_n$. Then $\reg(S/\calI_G^t)=2t+n-3$ for all $t\ge 1$.
\end{Theorem}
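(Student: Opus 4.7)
My plan for this theorem rests on the ``in particular'' clause of \Cref{cor:HT}. By item (e)(i) of \Cref{complex} together with \cite[Proposition 4.10]{JKS2}, the ideal $\calI_G$ is an almost complete intersection of height $n$ with $n+1$ generators, and admits a $d$-sequence presentation $u_1, \dots, u_{n+1}$ whose first $n$ elements form a regular sequence. In the notation of \Cref{cor:HT}, with the role of ``$n$'' played by $n+1$ and $\delta = 2$, the threshold $n\delta - n - \delta$ evaluates to $n - 1$. Hence, once we prove $\reg(S/\calI_G) = n - 1$, the ``in particular'' conclusion of \Cref{cor:HT} yields
\[
\reg(S/\calI_G^t) = \reg(S/\calI_G) + 2t - 2 = 2t + n - 3
\]
for every $t \ge 1$, which is exactly the claim.

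The upper bound $\reg(S/\calI_G) \le n - 1$ is immediate from Kumar's theorem \cite{MR4257788}: $G$ is a non-bipartite graph on $n$ vertices, $G \setminus e = C_n$ is bipartite, and $G$ is not an odd cycle. For the matching lower bound, write $e = \{u, v\}$ and let $C_a, C_b$ denote the two induced odd sub-cycles created by the chord, with $a + b = n+2$ and $a \le b$. If $a = 3$, then $C_b$ is an induced odd cycle of length $n - 1$, so Kumar's lower bound $\reg(S/\calI_G) \ge \oc(G) \ge n - 1$ closes this case. If $a \ge 5$, Kumar's bound only delivers $\reg(S/\calI_G) \ge n - 2$, and so we must work harder via a short exact sequence.

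For the remaining case $a \ge 5$, I turn to
\[
0 \to \frac{S}{\calI_{C_n} : \bar{g}_e}(-2) \xrightarrow{\cdot \bar{g}_e} \frac{S}{\calI_{C_n}} \to \frac{S}{\calI_G} \to 0.
\]
By \Cref{Phi} we have $\reg(S/\calI_{C_n}) = \reg(S/J_{C_n}) = n - 2$; and by \Cref{nonbipartite} applied to the bipartite graph $C_n$, the colon ideal $\calI_{C_n} : \bar{g}_e$ equals $\Phi(J_{(C_n)_e})$, so its quotient has regularity $\reg(S/J_{(C_n)_e})$. Granting the sub-claim $\reg(S/J_{(C_n)_e}) = n - 2$, item \ref{lem5-3} of \Cref{lem5} gives $\reg(S/\calI_G) = \max(n-1, n-2) = n - 1$ (the two arguments of the max differ, so the lemma furnishes equality).

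The main obstacle is this sub-claim $\reg(S/J_{(C_n)_e}) = n - 2$. The graph $(C_n)_e$ is $C_n$ augmented by two disjoint ``triangular ears'' via the new chord edges $\{2, n\}$ and $\{a-1, a+1\}$. I plan to prove the sub-claim by one further iteration of a short exact sequence: remove the edge $e'' = \{a-1, a+1\}$ to pass to $H := (C_n)_e \setminus e''$, which is the clique sum $K_3 \cup_e C_{n-1}$ along the shared edge $\{2, n\}$. The binomial edge ideal regularity $\reg(S/J_H) = 1 + (n-3) = n - 2$ follows from the clique-sum formula of \cite[Proposition 3.11]{JKS}. The colon ideal $J_H : f_{e''}$ is identified with $J_{H_{e''}}$ via \cite[Theorem 3.7]{MR3169597}, and the regularity of $J_{H_{e''}}$ can be bounded from above using previously established formulas in this paper (its structure is $C_n$ decorated with three triangles, all treatable by the cut-vertex / clique-sum techniques). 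Combining these estimates via \Cref{lem5} delivers the sub-claim and thus completes the proof.
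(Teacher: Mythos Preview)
Your overall strategy is correct and matches the paper: reduce to showing $\reg(S/\calI_G)=n-1$ and then apply the ``in particular'' clause of \Cref{cor:HT} (the paper uses the equivalent first case of \Cref{cor:HT_general}). Your use of Kumar's upper bound and, for $a=3$, the lower bound $\reg(S/\calI_G)\ge \oc(G)=n-1$ is a pleasant alternative to the paper's uniform short-exact-sequence argument.

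The gap is in the $a\ge 5$ case, specifically in the sub-claim $\reg(S/J_{(C_n)_e})=n-2$. Your plan peels off the edge $e''=\{a-1,a+1\}$ and runs another short exact sequence, which forces you to control $\reg(S/J_{H_{e''}})$. You only assert this is ``treatable by the cut-vertex / clique-sum techniques,'' but you never carry it out, and no formula established in this paper covers $H_{e''}$: that graph is $C_n$ with the three chords $\{2,n\},\{a-2,a\},\{a,a+2\}$, i.e.\ three $K_3$'s glued to $C_{n-3}$, two of which share the vertex $a$. Moreover, the value you would need is delicate: with $\reg(S/J_H)=n-2$, item~\ref{lem5-3} of \Cref{lem5} only yields $\reg(S/J_{(C_n)_e})=n-2$ when $\reg(S/J_{H_{e''}})\le n-5$ or $\reg(S/J_{H_{e''}})=n-3$; a mere upper bound like $\le n-4$ would leave the sub-claim unresolved.

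The paper avoids this entire second layer. It observes directly that $(C_n)_e$ is the clique sum of two copies of $K_3$ along two distinct edges of $C_{n-2}$, and applies \cite[Proposition 4.6]{MR4338055} to obtain $\reg(S/J_{(C_n)_e})=n-2$ in one stroke (with a separate one-line citation \cite[Theorem 3.2]{MR3859963} for $n=4$). If you replace your iterated SES with this citation, your proof is complete; in fact, with the paper's approach there is no need to split into $a=3$ versus $a\ge 5$ at all.
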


\begin{proof}
    We claim that $\reg(S/\calI_G)=n-1$. With this, since $\Ht(\calI_G)=n$, it then suffices to apply the first case of \Cref{cor:HT_general}.

    To prove the claim, let $e=\{u,v\}$ be the chord added to the even cycle $C_{n}$. As $G\setminus e=C_n$ is bipartite, $\reg(S/\calI_{G\setminus e})=\reg(S/J_{G\setminus e})=n-2$ by \Cref{Phi} and \cite[Thoerem 3.6]{JKS}. At the same time, $\calI_{G\setminus e}:\bar{g}_e\simeq J_{({G\setminus e})_e}$ by \Cref{nonbipartite}. Thus, it is sufficient to show that $\reg(S/J_{({G\setminus e})_e})=n-2$, since we can apply item \ref{lem5-3} of \Cref{lem5} to the following exact sequence
    \begin{equation}
        0\to \frac{S}{\calI_{G\setminus e}:\bar{g}_e}(-2)\xrightarrow{\cdot \bar{g}_{e}} \frac{S}{\calI_{G\setminus e}}\to \frac{S}{\calI_G}\to 0.
        \label{eqn:SES-e}
    \end{equation}
    We distinguish into the following two cases.
    \begin{enumerate}[a]
        \item If $n=4$, then $(G\setminus e)_e$ is isomorphic to $G$. It follows that $\reg(S/J_{(G\setminus e)_e})=2$ by \cite[Theorem 3.2]{MR3859963}.
        \item If $n\geq 6$, then $(G\setminus e)_e$ is a graph obtained by adding two distinct chords $e_1$ and $e_2$ in the cycle $C_{n}$.
            This graph can also be viewed as the clique sum of two complete graphs $K_3$ and $K_3$ along two distinct edges on $C_{n-2}$. Thus, it follows from
            \cite[Proposition 4.6]{MR4338055}
            that $\reg(S/J_{({G\setminus e})_e})=n-2$.
            \qedhere
    \end{enumerate}
\end{proof}

\begin{Theorem}
    Let $G$ be a non-bipartite graph on $[n]$ obtained by adding a chord $e=\{u ,v\}$ in an even cycle $H$ and a path to a vertex $i$ of the even cycle $H$ such that $\{u,i\},\{v,i\}$ are edges of the cycle $H$. Then $\reg(S/\calI_G^t)=2t+n-3$ for $t\ge 2$.
\end{Theorem}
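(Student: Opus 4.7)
The plan is to establish $\reg(S/\calI_G)\le n-1$ and then invoke case~(a) of \Cref{cor:HT_general}. By item~\ref{complex_e} of \Cref{complex}, $\calI_G$ is an almost complete intersection with $n+1$ quadratic generators, and $\Ht(\calI_G)=n$ since $G$ is connected and non-bipartite. Applied with $n+1$ generators of degree $\delta=2$, the hypothesis of case~(a) reads $\reg(S/\calI_G)\le 2(n+1)-(n+1)-2=n-1$ and its conclusion reads $\reg(S/\calI_G^t)=2(n+1)-(n+1)+2t-4=2t+n-3$ for $t\ge 2$, which is the desired formula.

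To obtain the upper bound on $\reg(S/\calI_G)$, I would use the short exact sequence
\[
    0\to \frac{S}{\calI_H:\bar{g}_e}(-2)\xrightarrow{\cdot \bar{g}_e}\frac{S}{\calI_H}\to \frac{S}{\calI_G}\to 0
\]
arising from the chord $e=\{u,v\}$, whose removal is in fact the only single-edge deletion that bipartizes $G$ (the two odd cycles of $G$, the triangle $\{u,i,v\}$ and the $(m-1)$-cycle through $u,v,w_1,\dots,w_{m-3}$, share only the edge $e$). Here $H=G\setminus e$ is the even unicyclic bipartite graph, so by \Cref{Phi} we have $\reg(S/\calI_H)=\reg(S/J_H)$. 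Exhibiting in $H$ an induced path of length $n-1$---starting at the endpoint of the attached path, running through $i$, and then along all cycle vertices except one of $\{u,v\}$---and invoking the Matsuda--Murai bound gives $\reg(S/J_H)=n-1$. Moreover, by \Cref{nonbipartite}, $\reg(S/(\calI_H:\bar{g}_e))=\reg(S/J_{H_e})$, where $H_e$ is $H$ with the two extra edges $\{i,w_1\}$ and $\{i,w_{m-3}\}$ (with $w_1,w_{m-3}$ the cycle-neighbors of $v,u$ different from $i$).

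The crux is therefore to show $\reg(S/J_{H_e})\le n-2$. Set $k:=n-m$ and let $\tilde H_j$ denote the graph obtained from $H_e$ by truncating its pendant path to $j$ vertices beyond $i$, so that $\tilde H_k=H_e$ and $\tilde H_0=:H_e^{(1)}$ is the block on the $m$ cycle-vertices (the cycle $H$ with the two added chords at $i$). I plan an iterated short-exact-sequence argument along the pendant path: peeling off the leaf edge $\{p_{j-1},p_j\}$ at each step and using the identity $J_{\tilde H_j\setminus e_j}:f_{e_j}=J_{(\tilde H_j\setminus e_j)_{e_j}}$ from \cite[Theorem 3.7]{MR3169597} together with \ref{lem5-3} of \Cref{lem5}, one gains at most $+1$ per step, yielding $\reg(S/J_{H_e})\le \reg(S/J_{H_e^{(1)}})+k$. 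It remains to show $\reg(S/J_{H_e^{(1)}})\le m-2$. The graph $H_e^{(1)}$ is the even cycle $C_{m-2}$ on $\{i,w_1,\dots,w_{m-3}\}$ with two triangle-ears attached at the edges $\{i,w_1\}$ and $\{i,w_{m-3}\}$; I would evaluate its regularity by induction on $m$, the base case $m=4$ giving $H_e^{(1)}=K_4\setminus e$ of regularity $2$ (e.g.\ by \cite[Theorem 3.2]{MR3859963}), and each triangle-ear contributing exactly one to the regularity via a further short exact sequence on an edge of the triangle, in the same spirit as the proofs of the earlier theorems in this section.

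Combining these estimates yields $\reg(S/J_{H_e})\le(m-2)+k=n-2$, so item~\ref{lem5-3} of \Cref{lem5} applied to the first short exact sequence gives $\reg(S/\calI_G)\le\max\{(n-2)+1,n-1\}=n-1$, and the theorem follows from case~(a) of \Cref{cor:HT_general}. The principal difficulty is the regularity analysis of the non-chordal graph $H_e^{(1)}$ when $m\ge 6$, where the cut vertex $i$ fails to be free in $H_e^{(1)}$, so the clean reduction of \Cref{lem:reduction_by_path} is not directly available and must be replaced by a triangle-ear induction.
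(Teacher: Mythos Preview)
Your overall strategy matches the paper's: bound $\reg(S/\calI_G)$ via the short exact sequence at the chord $e$, reduce to bounding $\reg(S/J_{(G\setminus e)_e})$, and invoke case~(a) of \Cref{cor:HT_general}. There are two issues, however.

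First, a minor inaccuracy: the induced path you exhibit in the balloon graph $G\setminus e$ has length $n-2$, not $n-1$ (it uses $n-1$ vertices: the $k$ pendant vertices, then $i$, then $m-2$ further cycle vertices). In fact $\reg(S/J_{G\setminus e})=n-2$ by \cite[Remark 3.15]{JKS}, as the paper notes. This is harmless, since you only need the upper bound $\le n-1$ in the short exact sequence.

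Second, and this is a genuine gap: your peeling argument fails at the final step $j=1$. For $j\ge 2$ the ``$+1$ per step'' claim is valid because both endpoints of $e_j$ have degree at most~$2$, so $(\tilde H_j\setminus e_j)_{e_j}=\tilde H_{j-1}$. But for $j=1$ the vertex $i$ has four cycle-neighbors $u,v,w_1,w_{m-3}$ in $\tilde H_1\setminus e_1$, so $(\tilde H_1\setminus e_1)_{e_1}$ is $H_e^{(1)}$ \emph{together with a complete graph on} $\{u,v,w_1,w_{m-3}\}$---a different graph whose regularity you never address. Your remark that \Cref{lem:reduction_by_path} is ``not directly available'' is what sends you down this detour, and it is mistaken: the lemma \emph{is} available if you split at $p_1$ rather than at $i$, since $p_1$ is a free vertex of $\tilde H_1$. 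This is exactly what the paper does, reducing only to path length~$1$. From there the paper handles one more short exact sequence, identifying $\tilde H_1\setminus e_1=H_e^{(1)}$ as the clique sum of $C_{m-2}$ with two copies of $K_3$ (regularity $m-3$ by \cite[Proposition 4.6]{MR4338055}) and $(\tilde H_1\setminus e_1)_{e_1}$ as the clique sum of $K_5$ with $C_{m-3}$ (regularity $m-4$ by \cite[Proposition 3.11]{JKS}), obtaining the sharper equality $\reg(S/J_{H_e})=n-4$. Your weaker target $\le n-2$ would suffice for the theorem, but the missing $j=1$ step must be handled before you ever reach $H_e^{(1)}$, and the clique-sum citations dispose of both that step and your proposed triangle-ear induction in one stroke.
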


\begin{proof}
    We claim that in this case $\reg(S/\calI_G)\le n-2$. With this, since $\Ht(\calI_G)=n$, we can apply the first case of \Cref{cor:thm_3_general} to achieve the desired result.

    To show the claim, notice that $G\setminus e$ is bipartite balloon graph. Thus,
    \[
        \reg(S/\calI_{G\setminus e})=\reg(S/J_{G\setminus e})=n-2
    \]
    by Remark \ref{Phi} and \cite[Remark 3.15]{JKS}. Meanwhile,
    $\reg(S/(\calI_{G\setminus e}:\bar{g}_e))=\reg(S/J_{H_1})$ for $H_1\coloneqq (G\setminus e)_e$.
    Thus, to confirm the claim by applying item \ref{lem5-3} of \Cref{lem5} to the short exact sequence \eqref{eqn:SES-e}, it suffices to show that
    \begin{equation}
        \reg(S/J_{H_1}) = n-4. \label{eqn:H_1}
    \end{equation}

    To show \eqref{eqn:H_1}, we first notice that by \Cref{lem:reduction_by_path}, we may suppose that the length of the path added to $H$ is $1$. Let $e'$ be the only edge of this path. It is clear that $J_{H_1\setminus e'}:f_{e'}=J_{(H_1\setminus e')_{e'}}$ by \cite[Theorem 3.7]{MR3169597}, where $(H_1\setminus e')_{e'}$ is a clique sum of $K_5$ and $C_{n-4}$ along an edge. Thus, $\reg(S/(J_{H_1\setminus e'}:f_{e'}))=n-5$ by \cite[Proposition 3.11]{JKS}.
    Meanwhile, $H_1\setminus e'$ is the clique sum of $C_{n-3}$ and two complete graphs $K_3$ and $K_3$ along two distinct edges on $C_{n-3}$. Thus, $\reg(S/J_{H_1\setminus e'})=n-4$ by
    \cite[Proposition 4.6]{MR4338055}.
    Therefore, by looking at the short exact sequence
    \[
        0\to \frac{S}{J_{H_1\setminus e'}:f_{e'}}(-2)\to \frac{S}{J_{H_1\setminus e'}} \to \frac{S}{J_{H_1}} \to 0,
    \]
    we see immediately that \eqref{eqn:H_1} holds.  And this completes the proof.
\end{proof}

\begin{Remark}
    We still have one case unsettled, which is when $G$ is obtained by adding an edge between two disjoint odd cycles, given in item \ref{complex_c} of \Cref{complex}. \texttt{Macaulay2} suggests that $\reg(S/\calI_{G})=|V(G)|-1$, which we don't know how to prove for the time being. But if we have this, then we can apply the first case of \Cref{cor:HT_general}.
\end{Remark}

\begin{acknowledgment*}
    The authors are grateful to the software system \texttt{Macaulay2} \cite{M2}, for serving as an excellent source of inspiration. The first author is partially supported by the ``Anhui Initiative in Quantum Information Technologies'' (No.~AHY150200). And the second author is supported by the National Natural Science Foundation of China (No.~11271275) and by foundation of the Priority Academic Program Development of Jiangsu Higher Education Institutions.
\end{acknowledgment*}

\bibliography{BEI}
\end{document}